\newtheorem{theoremA}{Theorem}
\newtheorem{corollaryA}{Corollary}
\newtheorem*{theorem*}{Theorem}
\newtheorem{lemma}{Lemma}
\newtheorem*{lemma*}{Lemma}
\newtheorem{corollary}[lemma]{Corollary}
\newtheorem{proposition}[lemma]{Proposition}
\theoremstyle{definition}
\newtheorem{definition}{Definition}
\newtheorem{example}{Example}
\newtheorem*{assumption*}{Assmuption}
\newtheorem{remark}[lemma]{Remark}
\renewcommand{\i}{{\iota}}
\DeclareMathOperator{\ord}{ord}
\DeclareMathOperator{\supp}{supp}
\DeclareMathOperator{\ini}{In}
\DeclareMathOperator{\res}{Res}
\DeclareMathOperator{\tres}{\overline{\res}}
\DeclareMathOperator{\bres}{\underline{\res}}
\title[Solutions of differential and $q$-difference equations]{Complexity of Puiseux solutions of differential 
and $q$-difference equations of order and degree one}
\subjclass[2020]{32S65, 39A13, 34M35, 39A45}
\keywords{power series solution, holomorphic foliation, q-difference equation, Newton-Puiseux polygon}
\author{J. Cano Torres}
\email{jcano@agt.uva.es}
\address{Universidad de Valladolid, Spain.}
\author{P. Fortuny Ayuso}
\email{fortunypedro@uniovi.es}
\address{Universidad de Oviedo, Spain.}
\author{J. Rib\'{o}n}
\email{jribon@id.uff.br}
\address{Universidade Federal Fluminense, Brazil.}
\date{\today}
\definecolor{nuevo}{RGB}{0,0,120}
\newcommand{\javier}{\ifthenelse{\boolean{javier}}{\color{red}\setboolean{javier}{false}}{\color{black}\setboolean{javier}{true}}} 
\DeclareMathOperator{\Top}{Top}
\DeclareMathOperator{\Bot}{Bot}
\newcommand{\pedro}{\ifthenelse{\boolean{pedro}}{\color{blue}\setboolean{pedro}{false}}{\color{black}\setboolean{pedro}{true}}}
\newcommand{\jose}{\ifthenelse{\boolean{jose}}{\color{violet}\setboolean{jose}{false}}{\color{black}\setboolean{jose}{true}}}
\thanks{\textbf{Financial Support}. All authors partially supported by the Ministerio de Ciencia e Innovación (Spain),
Project Id. PID2019-105621GB-I00. The third author was also supported by the Brazilian National Council for Scientific and 
Technological Development - CNPq, Proc.  308838/2019-0, and FAPERJ - 
Fundação Carlos Chagas Filho de Amparo à Pesquisa do Estado do Rio de Janeiro, Processo
SEI 260003/003548/2022.}
\begin{document}
	
\begin{abstract}
	We relate the complexity of both differential and $q$-difference equations of order one and degree one and their solutions.
	Our point of view is to show that if the solutions are complicated, the initial equation is complicated too. 
	In this spirit, we bound from below an invariant of the differential or $q$-difference equation, the height of its Newton polygon, 
	in terms of the characteristic factors of a solution. The differential and the $q$-difference cases are treated in a unified way.
\end{abstract}

\maketitle

\section{Introduction} 
The ``Poincar\'{e} problem'', which consists in finding an upper bound for the algebraic degree of an invariant curve of a polynomial differential equation in the complex plane \cite{Poincare2}, has greatly influenced the study of singular holomorphic foliations. See \cite{campillo1997proximity,Cano-Fortuny-Ribon-2020,Carnicer,cavalier_lehmann_2006,Cerveau-Neto-1991,COUTINHO2006603,el2002plane,10.1145/1005285.1005309,galindo2014poincare,genzmer2018local,lei2005algebraic,pereira2002poincare,10.2307/2661388,soares2001geometry,WALCHER200051} for just a possibly biased collection of relevant citations. A related problem, which might be called the \emph{local Poincar\'{e} problem}, consists in trying to find upper bounds for the \emph{multiplicity} at a point of an invariant analytic curve of a holomorphic foliation defined in a germ of complex surface. As a matter of fact, the solution to this problem in the non-dicritical case is an essential part of the proof of the main result in \cite{Carnicer}: 
$\deg (\Gamma) \leq \deg ({\mathcal F}) + 2$, where $\Gamma$ is an invariant algebraic curve of a holomorphic foliation 
${\mathcal F}$ in $\mathbb{CP}^2$ with no dicritical singularities and $\deg$ stands for the degree. In this work, we focus on this local problem, and solve it \emph{at the same time} for differential and $q$-difference equations, as we shall show.

As the original Poincar\'{e} problem is stated for differential
equations, the usual techniques for solving it are geometric in nature
and derived from the general theory of singularities of curves and of
plane holomorphic foliations. There is, however, a less known powerful
tool called the \emph{Newton polygon} or \emph{diagram}
\cite{Newton-OO-4}, introduced by the renowned physicist and
mathematician as a tool for computing solutions of algebraic
equations, and later applied by Cramer \cite{Cramer} for computing power series
$y=\sum_{\i>0} a_{\i}x^{\i}$ with rational exponents that are
solutions of analytic equations $f(x,y)=0$. This tool, which is purely
algorithmic and makes no reference to the geometric nature of the
problem, can be applied to any two-variable problem involving power
series for which one seeks a solution in terms of well-ordered power
series in one of the variables. In \cite{CanoJ2}, this technique is
applied to differential equations in two variables, whereas in
\cite{barbe2020qalgebraic}, it is used in the context of $q$-algebraic
equations. The modifications required for these applications are
minimal, and one can unify the arguments and state general results
regardless of the context.

 Specifically, let $\sigma$ denote either the differential
operator $y(x)\mapsto dy(x)/dx$ or the $q$-difference one
$y(x)\mapsto y(qx)$, and let
$P\equiv A(x,y) + B(x,y)\sigma(y)$ be a ``first order and first
degree'' polynomial in the operator $\sigma$: $A(x,y)$ and $B(x,y)$ are power series over $\mathbb{C}$, and a solution of $P=0$ is a power series with rational exponents $s(x)\in \cup_{m \in {\mathbb N}} {\mathbb C}[[x^{\frac{1}{m}}]]$ such that $P(x,s(x)) \equiv 0$. Given such an analytic differential or $q$-difference equation in two variables, our main objective is computing an upper bound for the 
complexity of a power series solution $s(x)$ in terms of some property of the original equation. Obviously it is critical to consider the right notion of complexity. 
Notice that if $\gamma$ is the local, possibly formal, plane branch given by the parametrization $(x,s(x))$, then there is no way to bound the multiplicity of $\gamma$ in terms of algebraic invariants of the equation and specifically, its multiplicity: 
the differential equation $nx dy - my dx =0$,
with $m, n \in {\mathbb N}$ and $\gcd (m,n)=1$, has multiplicity $1$ but its power series solutions are $s(x)=cx^{\frac{m}{n}}$ for any $c$. The curve $(x, cx^{m/n})$ has multiplicity $\min (m,n)$ if $c \in {\mathbb C}^{*}$, that can be arbitrarily large. For $q$-difference equations, the same issue occurs with the equations $y-q^{\frac{n}{m}}\sigma(y)=0$ of multiplicity $0$, where $\sigma$ is the $q$-difference operator: the solutions are $s(x)=cx^{\frac{m}{n}}$ for $c \in \mathbb{C}$. 
This problem cannot be overcome, which leads to seeking a different criterion for the complexity of a Puiseux power series solution.

In this paper, we consider the characteristic exponents of $s(x)$ as the measure of such
complexity, following the point of view of \cite{Cano-Fortuny-Ribon-2020}. Notice that
the characteristic exponents of $s(x)$ are intimately related to the Puiseux
characteristic of the curve $\Gamma$ defined by $(x,s(x))$ \cite{Wall} but, when $\Gamma$
is tangent to $x=0$, one has to use the well-known \emph{inversion formula}
\cite{Zariski-equi-iii,Abhyankar-inversion} in order to compute ones from the others. The
characteristic exponents are significant invariants for germs of plane curves: for
instance, their number, which has come to be called {\it genus}, is deeply related to the
topology of the curve $\gamma$, as it measures the levels of interlacing of the associated
knot \cite{Wall}. Given
$s(x) \in \cup_{m \in {\mathbb N}} {\mathbb C}[[x^{\frac{1}{m}}]]$ with $s(0)=0$, let
$g \geq 0$ be the genus of $s(x)$ and $n$ its multiplicity. One can derive, from the
characteristic exponents, positive integers $r_1,\ldots, r_g$, which we later call the \emph{characteristic
 factors} in Definition \ref{def:puiseux-exponents-and-factors}, that are
greater than $1$ and such that if $n$ is the least common denominator of the exponents of
$s(x)$, then $r_1\cdots r_g=n$. Our results hinge on
these factors and the notion of \emph{dicritical exponent}. Assume $s(x)=\sum a_ix^{i/m}$
is a solution of $P=0$. Roughly speaking, an exponent $k/m$ of $s(x)$ 
is \emph{dicritical} if for all but finitely many $c\in \mathbb{C}$, there exits another
solution $s_{c}(x)$ of $P=0$ such that $s_c(x)-\sum_{i <k} a_ix^{i/m} =x^{k/m} u(x)$ where $u(0) = c$. We include a brief excursus in subsection \ref{subs:dicritical} relating our definition with
the classical definition of dicritical divisor of a singular holomorphic
foliation.

In what follows,
 $P\equiv A(x,y) + B(x,y)\sigma(y)$ is an operator with
$A(x,y), B(x,y) \in {\mathbb C}[[x,y]]$ such that $A(0,0)=B(0,0)=0$, and
$s(x) \in \cup_{m \in {\mathbb N}} {\mathbb C}[[x^{\frac{1}{m}}]]$
with $s(0)=0$, is a solution of $P=0$ with $r_1,\ldots, r_g$ its
characteristic factors. After constructing the Newton diagram $\mathcal{N}(P)$, we shall attach to $P$ and $s(x)$ several invariants: $H(P)$, the \emph{height of $P$}, which is the topmost vertex of $\mathcal{N}(P)$; the \emph{multiplicity of $P$}, $\nu_0(P)$, which is the minimum multiplicity of $A(x,y)$ and $B(x,y)$; and a number $H(P,s(x))$, a kind of \emph{relative height}, which is, roughly speaking, the topmost vertex of the part of $\mathcal{N}(P)$ corresponding to the order of $s(x)$, $\ord(s(x))$. By definition, we have
 \begin{equation}
 \label{eq:Intro:H(P)geqH(P,s(x)}
 H(P)\geq H(P,s(x)),\quad\text{for any }s(x),
 \end{equation}
and also, 
 \begin{equation}
 \label{eq:Intro:multiplicity_and_H(P)}
 \nu_{0}(P)+1\geq H(P,s(x)),\quad\text{if }\ord (s(x))\geq 1.
 \end{equation}
 Our main results
 provide bounds of $H(P)$ and of
$\nu_0(P)$ from below in terms of the characteristic factors $r_1,\ldots, r_g$.
\begin{theoremA}\label{the:main} 
If $1\leq i_1<\ldots< i_d \leq g$ is the sequence of indices of
dicritical characteristic exponents of $s(x)$, then
\begin{equation*} 
 H(P)\geq H(P,s(x)) \geq
 \prod_{j=1}^gr_j -
 \sum_{k=1}^{d}\left( \prod_{j=1}^{i_{k}}r_j - \prod_{j=1}^{i_{k}-1}r_j\right).
\end{equation*}
If, moreover, $\ord (s(x)) \geq 1$, then $\nu_0(P)+1$
is greater than or equal to the right hand side of the inequality.
\end{theoremA}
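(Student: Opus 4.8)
The plan is to deduce the theorem from the single inequality $H(P,s(x))\geq\Phi$, where $\Phi:=\prod_{j=1}^{g}r_j-\sum_{k=1}^{d}\bigl(\prod_{j=1}^{i_k}r_j-\prod_{j=1}^{i_k-1}r_j\bigr)$ is the right-hand side above: indeed $H(P)\geq H(P,s(x))$ is \eqref{eq:Intro:H(P)geqH(P,s(x)}, and once $H(P,s(x))\geq\Phi$ is known the assertion $\nu_0(P)+1\geq\Phi$ is immediate from \eqref{eq:Intro:multiplicity_and_H(P)}. We would prove $H(P,s(x))\geq\Phi$ by induction on the genus $g$ of $s(x)$, running the Newton--Puiseux algorithm that produces $s(x)$ from $P$ and peeling off the first characteristic exponent. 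The recursion is governed by the elementary identities $\Phi=r_1\,\Phi'$, valid when the first characteristic exponent of $s(x)$ is not dicritical, and $\Phi=r_1\,\Phi'-(r_1-1)$, valid when it is; here $\Phi'$ is the analogue of $\Phi$ formed with the characteristic factors $r_2,\dots,r_g$ of the reduced solution, the dicritical indices being diminished by one and the first one deleted in the dicritical case. These identities, together with $\Phi\geq 1$ (which follows from them and the base value $\Phi=1$ at $g=0$), are what the induction exploits.

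For $g=0$ the series $s(x)$ has trivial characteristic and $\Phi=1$: here the face of $\mathcal{N}(P)$ selected by $\ord(s(x))$ cannot be a single vertex lying at height $0$, since a height-$0$ point of the Newton diagram can only come from the $A$-part of $P=A+B\sigma$ (the $B\sigma$-part contributing exclusively at heights $\geq 1$, in both the differential and the $q$-difference settings), so the monomial sitting at such a vertex would be the strictly lowest-order term of $P(x,s(x))$ and could not vanish, contradicting $P(x,s(x))\equiv 0$; hence $H(P,s(x))\geq 1$. For $g\geq 1$ we would run the algorithm to strip off the first characteristic exponent of $s(x)$: this is a finite composition of elementary coordinate changes --- translations $y\mapsto y+cx^{\mu}$ by the (necessarily integer-exponent) terms of $s(x)$ preceding the first characteristic exponent, followed by a ramification $x=x_1^{r_1}$ and a substitution $y=x_1^{p_1}(c+y_1)$ with $\gcd(p_1,r_1)=1$ --- yielding an operator $\tilde P=\tilde A+\tilde B\sigma$ of the same shape and a solution $\tilde s(x_1)$ of $\tilde P=0$ whose characteristic factors are $r_2,\dots,r_g$, with dicritical indices obtained from those of $s(x)$ by diminishing them by one and deleting the first one precisely when the first characteristic exponent of $s(x)$ is dicritical (this last point transferring along the natural bijection between the solutions of $P=0$ sharing the initial segment of $s(x)$ and the solutions of $\tilde P=0$).

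The crux is then a one-step estimate comparing $H(P,s(x))$ with $H(\tilde P,\tilde s(x_1))$. Each translation preserves the topmost vertex of the face selected by the current order --- the coefficient there is unchanged, because no point strictly above the face-line can be moved onto it by the binomial expansion --- and so does the ramification, up to a harmless horizontal rescaling of the diagram; the multiplicative jump occurs exactly when the first characteristic exponent is crossed. There we expect to prove $H(P,s(x))\geq r_1\,H(\tilde P,\tilde s(x_1))$ in the non-dicritical case, the factor $r_1$ coming ultimately from the fact that the relevant side then has slope $-r_1/p_1$ with $\gcd(p_1,r_1)=1$ and hence vertical extent a multiple of $r_1$, and $H(P,s(x))-1\geq r_1\bigl(H(\tilde P,\tilde s(x_1))-1\bigr)$ in the dicritical case, which is precisely the case in which the characteristic equation of the relevant side is trivially satisfied and the side undergoes a ``resonant'' collapse at a cost of at most $r_1-1$. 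Feeding the inductive hypothesis $H(\tilde P,\tilde s(x_1))\geq\Phi'$ into these two inequalities and invoking the identities of the first paragraph closes the induction; the statement about $\nu_0(P)$ then follows from \eqref{eq:Intro:multiplicity_and_H(P)} as noted.

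We expect this one-step estimate to be the real difficulty, on two fronts. Combinatorially, one must trace a side of $\mathcal{N}(P)$ --- together with the multiplicity of the root of its characteristic polynomial attached to $s(x)$ --- through the combined ramification, dilation and translation, and pin down the exact factor $r_1$ in the generic case and the exact loss $r_1-1$ in the degenerate one; the hypothesis that $P$ has order and degree one is used here in an essential way, since it guarantees that the characteristic equation of a side is a genuine one-variable polynomial equation, of degree controlled by the vertical extent of the side, with $\sigma$ affecting only its coefficients. On the uniformity front, the whole analysis must be carried out at once for $\sigma(y)=dy/dx$ and $\sigma(y)=y(qx)$: this is where $\ce$ enters, the point being that $\sigma$ contributes to the Newton diagram a translation by $(-\epsilon,1)$ with $\epsilon\in\{0,1\}$ and transforms compatibly with the ramification (at the cost of an auxiliary parameter $q^{1/r_1}$ in the $q$-difference case), so that a single combinatorial picture governs both. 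A smaller preliminary point is the case $\ord(s(x))<1$, where the branch $(x,s(x))$ is tangent to $\{x=0\}$ and the passage between the characteristic exponents of $s(x)$ and the Puiseux characteristic of that branch requires the inversion formula; since the induction is phrased directly in terms of the characteristic factors of $s(x)$, the inversion formula never intervenes, but one should still check that the first step of the algorithm is well posed in this range.
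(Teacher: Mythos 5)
Your outline is a valid alternative organization of essentially the same argument, but it is not a proof: the entire content is concentrated in the one-step estimate that you flag as ``the real difficulty,'' and that estimate is precisely Proposition~\ref{pro:descent-step} of the paper, which you neither prove nor reduce to anything simpler. The algebraic bookkeeping is fine --- the identities $\Phi=r_1\Phi'$ (non-dicritical) and $\Phi=r_1\Phi'-(r_1-1)$ (dicritical) are correct, and together with the base case $\Phi=1$ at $g=0$ and the monotonicity $\Top(E_{k+1})\leq\Bot(E_k)$ they close the induction; the reduction to $H(P,s(x))\geq\Phi$ via~\eqref{eq:Intro:H(P)geqH(P,s(x)} and~\eqref{eq:Intro:multiplicity_and_H(P)} is also exactly how the paper handles the $H(P)$ and $\nu_0(P)$ statements. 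But your justification of the jump --- ``vertical extent a multiple of $r_1$'' in the non-dicritical case and a ``resonant collapse at cost $\leq r_1-1$'' in the dicritical case --- is only the first half of what is needed. The genuine content is the pair of facts: (i) the initial polynomial $\Phi_{k-1,k/n}(C)$ factors as $C^{\overline h}\overline\Phi(C^{r_1})$, so a nonzero root has multiplicity at most $(\deg\Phi-\overline h)/r_1$; and (ii) $a_k$ has multiplicity at least $\Bot(E_k)$ in $\Phi_{k-1,k/n}$ (non-dicritical) or at least $\Bot(E_k)-1$ in $\beta_k$ (dicritical). Fact (ii) is Corollary~\ref{cor:height-multiplicity}, which in turn rests on the additive decomposition $\Phi_{k-1,k/n}(C)=\alpha_k(C)+\delta_{k/n}\,C\,\beta_k(C)$ and the derivative identities of Lemma~\ref{lem:alpha-beta-descent}. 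None of this is in your sketch, and without it the jump is unproved.

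On the organizational comparison: you peel the first characteristic exponent by an actual ramification $x=x_1^{r_1}$ followed by the monomial substitution $y=x_1^{p_1}(c+y_1)$, then recurse on the genus-$(g-1)$ series. The paper never ramifies; it keeps $x$ fixed, works in the ring ${\mathbb C}[[x^{1/m},y]]$ of ``covered equations,'' and simply iterates Proposition~\ref{pro:descent-step} and Lemma~\ref{lem:top-less-bottom} along the chain of elements $E_k$. The paper's route is cleaner because the ramified route forces you to (a) verify that $\tilde P$ is again a covered equation of the same order and degree, with $\sigma$ replaced by the $q^{1/r_1}$-difference operator in the $q$-difference case (so the constant $\delta_\mu$ and the condition $|q|\neq1$ must be tracked under the change of base), (b) check that dicriticality of $E_{e_1-1,e_1/n}$ for $P$ corresponds to dicriticality at the first step for $\tilde P$ and that the bijection of solutions you invoke is set up in the covered-equation framework, and (c) identify $\Bot(E_{e_1})$ with (a quantity dominating) $H(\tilde P,\tilde s(x_1))$ --- all of which the paper sidesteps. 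So: same engine, different chassis, and the chassis you chose requires some extra welding that the sketch leaves out. Also, the token $\ce$ in your last paragraph resolves to $\Gamma_\epsilon$ in this paper, which plays no role here; presumably you meant the order $o_\sigma\in\{0,1\}$ of the operator, which is indeed what encodes the shift $(-o_\sigma,1)$ in the Newton diagram.
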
 %
By convention and unless expressly stated otherwise, we consider that
an empty sum is equal to $0$ and that an empty product is equal to
$1$. In particular, the right hand side is equal to $1$ if $g=0$.
Note that if no characteristic exponent corresponds to a dicritical
element, then Theorem \ref{the:main} reads
\begin{displaymath}
 H(P)\geq H(P,s(x)) \geq r_1\cdots r_g.
\end{displaymath}
The inequality can be improved with every instance of
consecutive dicritical characteristic exponents
(Lemma \ref{lem:no-consecutive-dicritical}). In this way, one obtains a simplified form in which no assumptions need to be made about the dicritical exponents. 

\begin{corollaryA} \label{cor:a}
 Let $r_1,\ldots, r_g$ be the characteristic factors of $s(x)$. Then 
\begin{equation} \label{H:differential} 
 H(P)\geq H(P,s(x)) > \prod_{j=1}^{g-1} r_j - \prod_{j=1}^{g-2}r_j .
\end{equation}
If, moreover, $\ord (s(x)) \geq 1$, then $\nu_0(P)$
is greater than or equal to the right hand side of the inequality.
 
\end{corollaryA}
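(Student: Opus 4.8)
The plan is to read Corollary \ref{cor:a} off Theorem \ref{the:main} and Lemma \ref{lem:no-consecutive-dicritical} by an elementary telescoping estimate. Write $N_i=\prod_{j=1}^{i}r_j$ for $i\ge 0$ and $N_i=1$ for $i<0$; since the $r_j$ are integers $\ge 2$, we have $1=N_0<N_1<\cdots<N_g$, with $N_i\ge 2N_{i-1}$ for $i\ge 1$, and the claim to be proved is $H(P,s(x))>N_{g-1}-N_{g-2}$. For $g\le 1$ the right-hand side equals $0$ while Theorem \ref{the:main} already yields $H(P,s(x))\ge 1$, so from now on $g\ge 2$. Writing $i_1<\cdots<i_d$ for the indices of the dicritical characteristic exponents, Theorem \ref{the:main} reads
\[
H(P,s(x)) \ge N_g-\sum_{k=1}^{d}\bigl(N_{i_k}-N_{i_k-1}\bigr),
\]
and the first step is to invoke Lemma \ref{lem:no-consecutive-dicritical}, which improves this bound whenever two dicritical exponents are consecutive, in order to reduce to the case in which no two of the $i_k$ are consecutive integers.

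The computation that does the work is the following telescoping bound: for integers $1\le j_1<\cdots<j_m\le M$, setting $j_0=0$ and using $j_{k-1}\le j_k-1$ together with the monotonicity of $N_\bullet$,
\[
\sum_{k=1}^{m}\bigl(N_{j_k}-N_{j_k-1}\bigr) \le \sum_{k=1}^{m}\bigl(N_{j_k}-N_{j_{k-1}}\bigr) = N_{j_m}-1 \le N_M-1.
\]
With this in hand I split according to whether $g$ is a dicritical index. If it is not, then $i_k\le g-1$ for every $k$, so the telescoping bound and $N_g\ge 2N_{g-1}$ give $H(P,s(x))\ge N_g-(N_{g-1}-1)\ge N_{g-1}+1$. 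If $g=i_d$, then non-consecutiveness forces $i_{d-1}\le g-2$ (or else $d=1$); peeling off the top term $N_{i_d}-N_{i_d-1}=N_g-N_{g-1}$ and applying the telescoping bound to $i_1<\cdots<i_{d-1}\le g-2$ (an empty range, contributing $0$, when $d=1$) yields $H(P,s(x))\ge N_{g-1}-(N_{g-2}-1)$. In both cases $H(P,s(x))\ge (N_{g-1}-N_{g-2})+1$.

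Therefore $H(P,s(x))\ge (N_{g-1}-N_{g-2})+1>N_{g-1}-N_{g-2}$, and combining with $H(P)\ge H(P,s(x))$ from \eqref{eq:Intro:H(P)geqH(P,s(x)} gives the first inequality of the corollary. For the second, when $\ord(s(x))\ge 1$, inequality \eqref{eq:Intro:multiplicity_and_H(P)} gives $\nu_0(P)+1\ge H(P,s(x))\ge (N_{g-1}-N_{g-2})+1$, whence $\nu_0(P)\ge N_{g-1}-N_{g-2}=\prod_{j=1}^{g-1}r_j-\prod_{j=1}^{g-2}r_j$; note that the strict inequality in the statement is in fact the integer inequality $H(P,s(x))\ge (N_{g-1}-N_{g-2})+1$, which is exactly what yields the clean bound on $\nu_0(P)$.

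I expect essentially all the content to lie in Lemma \ref{lem:no-consecutive-dicritical}: without the reduction to non-consecutive dicritical indices the lower bound of Theorem \ref{the:main} can collapse all the way to $1$ — take every characteristic exponent dicritical and telescope — which is far weaker than $\prod_{j=1}^{g-1}r_j-\prod_{j=1}^{g-2}r_j$. Once that reduction is in place, the remaining argument is just the telescoping estimate above, the only delicate point being the bookkeeping of the two cases near the top index $g$.
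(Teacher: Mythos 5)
Your proof is correct and follows essentially the same route as the paper: you use Lemma \ref{lem:no-consecutive-dicritical} (equivalently the paper's Corollary \ref{cor:two-consecutive-dicritical}) to reduce the penalty sum of Theorem \ref{the:main} to the terminally dicritical indices, which are automatically non-consecutive, and then bound that sum. Your telescoping estimate $\sum_k (N_{j_k}-N_{j_k-1})\le N_{j_m}-1$ (with $j_0=0$) packages the paper's rearrangement in its Equation (eq:large-ineq) and its separate sub-cases $\ell_s<g$, $\ell_s=g$ with $s=1$, and $\ell_s=g$ with $s>1$ into two uniform cases, which is a slightly cleaner bookkeeping, but the key reduction and the overall structure are identical, and your observation that the strict inequality is really the integer bound $H(P,s(x))\ge (N_{g-1}-N_{g-2})+1$ correctly yields the stated estimate for $\nu_0(P)$.
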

We can
improve this result in the differential, (see
\cite{Cano-Fortuny-Ribon-2020}), in the \emph{generic} $q$-difference
and in the contracting, i.e. $|q|<1$, $q$-difference cases. To this end,
we introduce the concept of \emph{reasonable equations} (see Definition
\ref{def:reasonable}), that {} encompasses {} the previous cases. 
\begin{theoremA} \label{teo:reasonable}
 If $s(x)$ is a Puiseux solution of genus $g$ of the reasonable equation $P=0$, then
\begin{equation*} 
 H(P)\geq H(P,s(x)) \geq r_1\cdots r_{g-1} .
 \end{equation*}
If, moreover, $\ord (s(x)) \geq 1$, then one also has $\nu_0(P)+1\geq r_1\cdots r_{g-1}$.
 
\end{theoremA}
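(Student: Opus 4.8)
The plan is to run the Newton--Puiseux algorithm on $P=0$ along the solution $s(x)$ and track how the height of the Newton diagram evolves under the successive changes of variable, exactly as in the proof of Theorem A but with the sharper bookkeeping afforded by the ``reasonable'' hypothesis. Recall that the characteristic factors $r_1,\ldots,r_g$ record the successive ``ramification jumps'' of $s(x)$: after passing the $k$-th characteristic exponent one performs a ramification $x=t^{r_k}$ (possibly preceded and followed by translations $y\mapsto y+cx^{\mu}$ that do not change the genus), and the key quantitative fact from the theory of the Newton polygon for first-order, first-degree operators is that a ramification by $r$ multiplies the relevant portion of the polygon's height by $r$, \emph{unless} the exponent being crossed is dicritical, in which case crossing it costs one unit of height (this is precisely the mechanism already exploited in Theorem A via the terms $\prod_{j=1}^{i_k}r_j-\prod_{j=1}^{i_k-1}r_j$).

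The first step is therefore to set up the induction on $g$: one shows that $H(P,s(x))$ is bounded below by the analogous relative height of the transformed equation $\widetilde P$ after stripping off the first characteristic exponent, and that the genus drops by exactly one while the leftover solution $\widetilde s(t)$ has characteristic factors $r_2,\ldots,r_g$. The base case $g=1$ gives $H(P,s(x))\geq 1 = r_1\cdots r_0$ (empty product), which holds by \eqref{eq:Intro:H(P)geqH(P,s(x)} together with the observation that a genuine solution forces the diagram to have a vertex of height at least one.

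The second, and decisive, step is to show that in the reasonable case the \emph{last} characteristic exponent is never dicritical, or more precisely that one can always arrange the computation so that no height is lost at the final stage; this replaces, in the bound, the generic $\prod_{j=1}^{g-1}r_j-\prod_{j=1}^{g-2}r_j$ of Corollary B by the full product $\prod_{j=1}^{g-1}r_j$. Here is where the definition of ``reasonable'' (Definition \ref{def:reasonable}) must be unpacked: in the differential case, in the generic $q$-difference case, and in the contracting case $|q|<1$, the resonance/dicriticalness that would force a height drop at the top characteristic factor is excluded for arithmetic reasons (the relevant eigenvalue ratio $m/n$ or $q^{m/n}$ cannot hit the forbidden value at the last step). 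Concretely, one examines the ``vertical'' side of the Newton polygon attached to $\ord(s(x))$ after the $(g-1)$-st ramification and shows its top vertex sits at height $\geq r_1\cdots r_{g-1}$; the reasonable hypothesis guarantees the associated indicial/characteristic equation is not identically satisfied, so this vertex genuinely survives. Combining with \eqref{eq:Intro:H(P)geqH(P,s(x)} gives $H(P)\geq H(P,s(x))\geq r_1\cdots r_{g-1}$, and the statement about $\nu_0(P)+1$ follows from \eqref{eq:Intro:multiplicity_and_H(P)} exactly as in Theorem A once $\ord(s(x))\geq 1$.

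The main obstacle I anticipate is the second step: making precise and uniform — across the differential, generic $q$-difference, and contracting $q$-difference cases — the claim that the top characteristic factor contributes its full multiplicative weight. This requires a careful case analysis of the indicial equation at the last ramification, checking that the ``reasonable'' arithmetic condition rules out the degenerate (dicritical) behaviour, and it is the place where the unified treatment of the two types of operators is most delicate. Everything else — the inductive descent, the translation-invariance of genus, the transfer between $H$, $H(P,s(x))$ and $\nu_0(P)$ — should be routine given the machinery already developed for Theorem A and Corollary B.
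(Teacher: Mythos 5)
There is a genuine gap: your ``decisive step'' is based on a false claim. You assert that for a reasonable equation the last characteristic exponent is never dicritical, or at least that no height is lost at the final stage. This is not true, and the paper's proof does not attempt to show it. The differential equation $P\equiv py - nxy_1 = 0$ with $\gcd(p,n)=1$ is reasonable (all differential equations are), has the single characteristic exponent $p/n$, and that exponent \emph{is} dicritical; the Newton polygon has $\Top(E_{k})=\Bot(E_k)=1$, so $H(P)=1=r_1\cdots r_{g-1}$ with equality, precisely because height \emph{is} lost at the last exponent. More generally, Proposition \ref{pro:b} explicitly identifies the equality case as the one where $e_g$ is dicritical; the reasonable hypothesis does not prevent this.

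What the reasonable hypothesis actually buys, via Proposition \ref{pro:no-pure-dicritical-chain} and Lemma \ref{lem:inter-dicritical}, is the impossibility of a ``pure dicritical chain'': a run of substitutions between two dicritical elements in which every intermediate step saturates the sharp case $\Bot(E_j)=\Top(E_j)/\rho_j$ and $\Top(E_{j+1})=\Bot(E_j)$. Tracking the top/bottom residues $\tres_j,\bres_j$ through such a chain (Corollary \ref{cor:residue-in-pure-chain}) forces $q^{1/n}$ to be a root of an improper polynomial, which is what reasonableness excludes. The consequence is that between two dicritical elements there is always at least one strict drop (Lemma \ref{lem:inter-dicritical}), so the additive penalty $\prod r_j-\prod r_{j-1}$ from Theorem \ref{the:main} is effectively paid only \emph{once}, for the last dicritical characteristic exponent $e_\ell$. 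The paper's proof of Proposition \ref{pro:b} then estimates $\Top(E_m)\geq r_1\cdots r_g - \frac{r_\ell-1}{r_\ell}r_1\cdots r_\ell$ and uses $r_g\geq 2$ to reach $r_1\cdots r_{g-1}$. Your sketch substitutes a different (and incorrect) mechanism for the role of reasonableness; without the residue-chain analysis and the improper-polynomial obstruction your argument cannot close the gap between Corollary \ref{cor:a} and Theorem \ref{teo:reasonable}. (The surrounding scaffolding you describe --- descent via Proposition \ref{pro:descent-step}, the reductions to $H(P,s(x))$ and $\nu_0(P)+1$ --- is fine, though the paper stays with translations in the $y$-coordinate rather than introducing an explicit ramification $x=t^{r_k}$.)
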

 As a consequence we get also a bound for the genus $g$ of a solution
$s(x)$, namely $g\leq 1+\log_2(H(P))$, and if $\ord (s(x)) \geq 1$, then $g\leq 1+\log(\nu_0(P)+1)$. 

We end our paper showing how the bound for the multiplicity of a differential equation found in \cite{Cano-Fortuny-Ribon-2020} can be obtained exclusively by means of the Newton polygon using our technique. Let $\nu_{0}(\mathcal{F})$ be the multiplicity at $0 \in \mathbb{C}^{2}$ of the singular foliation defined by the differential equation $A(x,y)dx+B(x,y)dy =0$, assuming $A(x,y)$ and $B(x,y)$ have no common factors. 
\begin{corollaryA}\label{cor:multiplicity-inequality} 
Let ${\mathcal F}$ be a germ of singular holomorphic foliation in a neighborhood of the origin in ${\mathbb C}^{2}$ that has a formal 
irreducible invariant curve $\Gamma$
whose characteristic factors are $r_1,\ldots, r_g$. Then, we obtain 
\begin{equation*}
 \nu_{0}(\mathcal{F}) \geq r_1 \cdots r_{g-1},
 \end{equation*}
 where an empty product is $1$.
\end{corollaryA}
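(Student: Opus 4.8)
The plan is to reduce the statement about the foliation $\mathcal{F}$ to an application of Theorem B (`teo:reasonable`) in the differential case. First I would write the foliation locally as $A(x,y)\,dx + B(x,y)\,dy = 0$ with $A, B \in \mathbb{C}[[x,y]]$ having no common factor, so that by definition $\nu_0(\mathcal{F}) = \min(\nu_0(A), \nu_0(B))$. The invariant curve $\Gamma$ is formal and irreducible; after possibly exchanging the roles of $x$ and $y$ (a linear change of coordinates, which does not affect $\nu_0(\mathcal{F})$, the genus, or the characteristic factors up to the inversion formula), I may assume $\Gamma$ is not tangent to $x=0$, hence admits a Puiseux parametrization $(x, s(x))$ with $s(x) \in \bigcup_m \mathbb{C}[[x^{1/m}]]$, $s(0)=0$, whose characteristic factors are exactly $r_1,\ldots,r_g$. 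Saying that $\Gamma$ is invariant means precisely that $s(x)$ is a solution of the first-order, first-degree operator
\[
P \equiv A(x,y) + B(x,y)\,\sigma(y), \qquad \sigma(y) = \frac{dy}{dx},
\]
obtained by substituting $dy = \sigma(y)\,dx$; here $A(0,0)=B(0,0)=0$ since $0$ is a singular point of $\mathcal{F}$.

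Next I would invoke the fact, recalled in the excerpt just before Theorem B, that every \emph{differential} equation is \emph{reasonable} in the sense of Definition `def:reasonable`. Thus Theorem B applies to $P$ and $s(x)$ and yields $H(P) \geq H(P,s(x)) \geq r_1\cdots r_{g-1}$, and moreover, when $\mathrm{ord}(s(x)) \geq 1$, the sharper $\nu_0(P)+1 \geq r_1\cdots r_{g-1}$. The remaining task is to pass from $\nu_0(P)$ or $H(P)$ back to $\nu_0(\mathcal{F})$. Note $\nu_0(P)$, the minimum of the multiplicities of the coefficients $A$ and $B$ of $P$, coincides with $\nu_0(\mathcal{F})$ by construction, so the case $\mathrm{ord}(s(x)) \geq 1$ gives $\nu_0(\mathcal{F})+1 \geq r_1\cdots r_{g-1}$; to get the claimed inequality without the extra $+1$ I would argue that when $\Gamma$ passes through the origin and $g \geq 1$ one in fact has $r_1 \geq 2$ and a small separate argument (or a direct look at the Newton polygon of $P$, using that the height $H(P)$ is bounded by $\nu_0(P)+1$ only, but the relevant edge forces one more unit) upgrades it; alternatively, and more cleanly, I would treat the case $\mathrm{ord}(s(x)) < 1$ separately using the inversion formula to reduce to $\mathrm{ord} \geq 1$ after swapping coordinates, and in the $\mathrm{ord}(s(x)) \geq 1$ case use that $H(P) \leq \nu_0(P)$ when the lowest-order terms of $A$ and $B\sigma(y)$ do not cancel — which is exactly the mechanism behind inequality \eqref{eq:Intro:multiplicity_and_H(P)} and can be sharpened here because $B(x,y)\sigma(y)$ has order at least $\nu_0(B)+0$ whereas differentiation lowers the $x$-degree.

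The main obstacle I anticipate is precisely this last bookkeeping step: carefully tracking how the passage from the $1$-form $A\,dx + B\,dy$ to the operator $A + B\sigma(y)$ interacts with the height/multiplicity invariants, and handling the tangency case $\Gamma \pitchfork \{x=0\}$ failing — i.e. when one must swap $x$ and $y$ and then reconcile the characteristic factors of the parametrized branch with those of $\Gamma$ via the inversion formula so that the product $r_1\cdots r_{g-1}$ appearing in the bound is genuinely the one in the statement. Everything else is a direct citation of Theorem B together with the already-established remark that differential equations are reasonable, plus the elementary identification $\nu_0(P) = \nu_0(\mathcal{F})$.
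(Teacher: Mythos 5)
Your reduction is essentially the paper's setup: write $\omega=A\,dx+B\,dy$, pass to $P=A+B\sigma(y)$ with $\sigma=d/dx$, choose linear coordinates so $\Gamma$ is transverse to $\{x=0\}$, observe that differential equations are reasonable, and invoke Theorem~\ref{teo:reasonable} together with $\nu_0(P)=\nu_0(\mathcal{F})$ to obtain $\nu_0(\mathcal{F})+1\geq r_1\cdots r_{g-1}$. What is missing is the last unit, and the two devices you gesture at for removing it do not work. In generic non-dicritical coordinates one \emph{arranges} that $x=0$ is not invariant for the first nonzero homogeneous jet of $\omega$, i.e.\ $\deg\Phi_{P,1}(C)=\nu_0(P)+1$; concretely the Newton polygon then contains the vertex $(-1,\nu_0(P)+1)$, arising from the $y^{\nu_0(P)}$ term in $B$ shifted left by differentiation, so $\Top(E_{P,1})=\nu_0(P)+1$. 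There is no slack of the kind you suggest in ``$H(P)\leq\nu_0(P)$'', and invoking the inversion formula merely returns you to the hypothesis $\ord(s(x))\geq 1$ that you already used.

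The paper closes the gap by a residue computation, which is the real content of the corollary. Since $\nu_0(\mathcal{F})\geq\Top(E_{P,1})-1$ and $\Top(E_{P,1})\geq r_1\cdots r_{g-1}$, the only dangerous situation is equality $\Top(E_{P,1})=r_1\cdots r_{g-1}$ with $g>1$, and one rules it out by contradiction. In that case the equality clause of Proposition~\ref{pro:b} forces a rigid chain: $e_g$ is the unique dicritical characteristic exponent, $\Bot(E_{e_g})=\Top(E_{e_g})=1$, and for $n\leq j<e_g$ one has $\Bot(E_j)=\Top(E_j)/\rho_j$ and $\Bot(E_j)=\Top(E_{j+1})$. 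Since the top vertex of $E_{P,1}=E_n$ lies at abscissa $-1$ while $A$ has no terms of negative abscissa, $\tres_n=0$. The recurrence $\bres_j=\rho_j\tres_j+(\rho_j-1)\delta_{j/n}$ of Corollary~\ref{cor:residue-in-pure-chain}, together with $\tres_{j+1}=\bres_j$, propagates $\tres_j\geq 0$ along the chain; but dicriticality of $E_{e_g-1,e_g/n}$ would force that residue to equal $-\delta_{e_g/n}<0$, a contradiction. This residue argument is the idea you are missing; without it, the proposal only proves $\nu_0(\mathcal{F})\geq r_1\cdots r_{g-1}-1$.
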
 

To summarize, we apply the Newton polygon technique \emph{simultaneously} to both
differential and $q$-difference equations in order to obtain lower bounds for the height of the Newton polygon in terms of the 
characteristic factors of a solution $s(x)$ that parametrises an irreducible curve.
Similar results were first proved in
\cite{Cano-Fortuny-Ribon-2020} in the differential case using geometric techniques related
to the desingularization of the curve defined by $s(x)$. Those bounds are valid in the 
differential and the generic $q$-difference case, which includes the contracting
($|q|<1$) case. In the case of a non-generic non-contracting $q$-difference equation $P$,
those lower bounds for $H(P)$ are just somewhat worse. A final section is devoted to improving the bound in the case of differential equations, and obtaining the same bound as in \cite{Cano-Fortuny-Ribon-2020}, just with the Newton polygon technique.

\textbf{Acknowledgment.} This work was greatly improved thanks to the many suggestions of an anonymous referee.

\section{Notation and preliminary results}
From now on, a complex number $q \in {\mathbb C}^{*}$ is chosen with $|q|\neq 1$, and also a specific determination of the complex logarithm, which we shall denote $\log(z)$ for $z\in \mathbb{C}$ whenever required. There is no indetermination, as the reader will notice.

Let $\sigma$ be one of the following operators on the set of Puiseux series over $x$ with non-negative exponents:
\begin{equation}
 \label{eq:sigma}
 \sigma\bigg(\sum_{i\geq 0} a_ix^{i/n}\bigg) = \left\{
 \begin{array}{l}
 \displaystyle\sum_{i\geq 0} \frac{i}{n} a_i x^{(i-n)/n} \\[15pt]
 \displaystyle\sum_{i\geq 0} q^{i/n}a_i x^{i/n}
 \end{array}
 \right.
\end{equation}
The first one will be called the \emph{differential operator}, the second one the \emph{$q$-difference operator}. The operator $\sigma$ is extended to a variable $y$ giving $\sigma(y)=y_1$ (the variable ``operated''). This way, we can write any differential equation of order and degree one, or any $q$-difference equation in which the $q$-difference operation only appears to degree and order one as
\begin{equation}
 \label{eq:initial-equation}
 A(x,y) + B(x,y)y_1 = 0.
\end{equation}

Before defining the concept of solution, we gather all the equations
we are going to study under a single concept:
\begin{definition}\label{def:covered-equation-and-solution}
 An $m$-\emph{covered equation} is an Equation (\ref{eq:initial-equation}) where:
 \begin{enumerate}
 \item Both $A(x,y)$ and $B(x,y)$ are formal power series in ${\mathbb C}[[x^{\frac{1}{m}},y]]$ with $A(0,0)=B(0,0)=0$;
 \item $y_1$ stands for $\sigma(y)$, where $\sigma$ is any of the operators in Equation (\ref{eq:sigma}). 
 \end{enumerate}
 We say that the equation is {\it covered} if it is $m$-covered for some $m \in {\mathbb N}$. 
 A \emph{solution} of such an equation is a Puiseux series $s(x)$ in $\cup_{m \in {\mathbb N}} {\mathbb C}[[x^{\frac{1}{m}}]]$ 
 such that (as Puiseux series)
 \begin{equation*}
 A(x,s(x)) + B(x,s(x))\sigma(s(x)) = 0
 \end{equation*}
 holds, 
 where $\sigma$ is the appropriate operator. Finally, the \emph{order} $o_{\sigma}$ of $\sigma$ is $0$ for the $q$-difference operator and $1$ for the differential operator.
\end{definition}

Our aim is to use the Newton-Puiseux polygon (from now on just \emph{Newton polygon}) to relate the complexity of the solutions of a $1$-covered Equation 
(\ref{eq:initial-equation}) to some specific invariant. 
Along the way, we carry out some auxiliary operations that transform Equation (\ref{eq:initial-equation}) into $m$-covered equations for $m$ possibly higher than $1$: this explains why Definition \ref{def:covered-equation-and-solution} is relevant. 
From now on, we fix a Puiseux power series
\begin{equation}\label{eq:s(x)}
 s(x) = \sum_{i\geq 1}a_i x^{i/n} \in
 \bigcup_{m \in {\mathbb N}} {\mathbb C}[[x^{\frac{1}{m}}]],
\end{equation}
where $n$ is the minimal $m \in {\mathbb N}$ such that
$s(x) \in {\mathbb C}[[x^{\frac{1}{m}}]]$. Indeed, if
$s \not \equiv 0$, $n$ is the least common denominator of the
exponents having non-zero coefficient: in technical terms, $s(x)$ is a
\emph{reduced power series} and the series $s(x)$ is a formal power
series if and only if $n=1$. In the case of differential equations,
we shall consider, in the last section, the analytic branch $\Gamma$
associated with $s(x)$, and relate its multiplicity to the notion of
multiplicity of the associated foliation. This requires us to perform
a change of coordinates, that will be introduced in the last section,
which has no equivalent in the $q$-difference case.

There are two cases, either $n=1$ or there exists a first index $e_1$ such that $a_{e_1}\neq 0$ and $e_1/n\not\in \mathbb{Z}$.
In the former case we define $g=0$ whereas in the latter case we write 
\begin{equation*}
 \frac{e_1}{n} = \frac{p_1}{r_1}
\end{equation*}
with $p_1$, $r_1$ mutually prime with $r_1 \geq 2$. Assuming $e_i$, $p_i$, $r_i$ are defined, either $r_1 \hdots r_{i} = n$ and we define $g=i$ or
there exists a first index $e_{i+1}$ such that $a_{e_{i+1}}\neq 0$ and $e_{i+1}/n\not\in \frac{1}{r_1\cdots r_i}\mathbb{Z}$, and write
\begin{equation} \label{def:char}
 r_1\cdots r_{i} \frac{e_{i+1}}{n} = \frac{p_{i+1}}{r_{i+1}}.
\end{equation}
 with $\gcd (p_{i+1}, r_{i+1})=1$ and $r_{i+1} \in {\mathbb N}_{\geq 2}$.
This construction ends at some $g\geq 0$ when $r_1\cdots r_g=n$.

As we shall work with power series and only in the case of foliations we shall consider
the associated germ of analytic curve, we use the following definition, associated with
$s(x)$ and not with the germ of curve $\Gamma$ defined by it. However, the \emph{genus} is
a measure both of the complexity of $s(x)$ and of the topological complexity of $\Gamma$ \cite{Wall}:
\begin{definition}\label{def:puiseux-exponents-and-factors}
 The numbers $e_{1}, \hdots, e_{g}$ are called the \emph{characteristic exponents} of $s(x)$, and the factors $r_1,\ldots, r_g$ will be called the \emph{characteristic factors}. 
 The number $g$ of characteristic exponents is the so called \emph{genus} of $s(x)$. If $g=0$ then $n=1$ and the Puiseux series $s(x)$ is said \emph{nonsingular}.
\end{definition}

\subsection{The Newton polygon}Given a covered equation as (\ref{eq:initial-equation}), the \emph{Newton polygon or diagram} 
is a graphical help for computing its solutions. Its construction follows.

Fix a covered equation $P= P(x,y,y_1)\equiv A(x,y)+B(x,y)y_1=0$ and write
\begin{equation}\label{eq:coefficients-sum}
 A(x,y) = \sum a_{\i j}x^\i y^{j} = \sum A_{\i j}x^\i y^{j}, \,\,\,
 B(x,y) = \sum b_{\i j}x^\i y^j = \sum B_{\i - o_{\sigma} \, j+1}x^\i y^{j} 
\end{equation}
where $\i\in \mathbb{Q}_{\geq 0}$ and $j\in \mathbb{N}$, where we use $\i$ instead of $i$ to emphasize that it may not be an integer. 
The \emph{supports} of $A(x,y)$ and $B(x,y)$ are the sets 
\begin{equation*}
 \supp(A) = \left\{
 (\i, j) : A_{\i j}\neq 0 
 \right\} \ \mathrm{and} \ 
 \supp(B) = \left\{
 (\i, j) : B_{\i j}\neq 0 
 \right\}
\end{equation*}
respectively. 
 The support of $B$ is obtained from $\{ (\i, j) : b_{\i j}\neq 0 \}$ by pushing one step up, 
 because of the factor $y_1$, and one step left
 in the case of differential equations, because $\sigma$ decreases the order of each monomial $x^{\mu}$ by one.

\begin{definition}\label{def:cloud-of-points}
 The \emph{cloud of points} of $P$ is the set $\mathcal{C}(P) = \supp (A) \cup \supp(B)$.
 \end{definition}

Consider the following subset of $\mathbb{R}_{\geq -1} \times \mathbb{R}_{\geq 0}$:
\begin{equation*}
 \mathcal{Q}(P) = \bigcup_{(\i, j)\in \mathcal{C}(P)} (\i, j) +
 \big(\mathbb{R}_{\geq 0} \times \mathbb{R}_{\geq 0}\big),
\end{equation*}
where we place a positive quadrant at each point of the cloud.
\begin{definition}\label{def:newton-polygon}
 The \emph{Newton polygon} $\mathcal{N}(P)$ of $P$ is the convex envelope of $\mathcal{Q}(P)$.
\end{definition}

\begin{example}
 Consider the equation
 \begin{equation}\label{eq:example}
 P \equiv y^{4} + x^3y^{3}+ xy^2 - x^3y + x^5 + (xy^3 - x^2y)y_1.
 \end{equation}
 Its Newton polygon is shown in Figure \ref{fig:newton-polygon-1}. The points $(1,4)$ and $(2,2)$, the unfilled circles, correspond to $ (xy^3 - x^2y)y_1$ in the
 $q$-difference case. The Newton polygon, however, is the same, in this case, for both equations.
\begin{figure}[h!]
 \centering
 \includegraphics{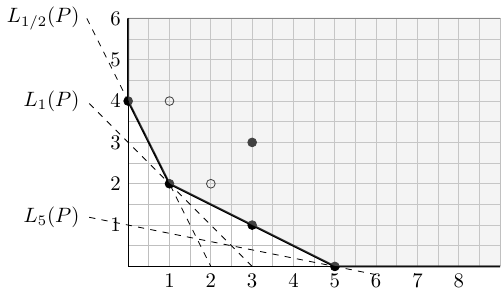}
 \caption{Cloud of points, Newton polygon and some supporting lines (see Def. \ref{def:supporting}) of the equation $P=0$ in \eqref{eq:example}. The two unfilled points correspond only to the $q$-difference case, whereas the filled ones correspond to both cases.}
 \label{fig:newton-polygon-1}
\end{figure}

\end{example}

\subsection{Newton polygon and solutions}
The interest of this construction will become (hopefully) apparent at the end of this section. Take $s(x)$ as in \eqref{eq:s(x)}. From now on, we fix a covered equation
\begin{equation}
 \label{eq:starting-equation}
 P \equiv A(x,y) + B(x,y)y_1 = 0,
\end{equation}
and denote by $\sigma$ the corresponding operator.

\begin{definition}\label{def:truncation}
 The \emph{$k$-th truncation} of $s(x)$ is the Puiseux series
 \begin{equation*}
 s_k(x) = \sum_{ 0 < i \leq k }a_ix^{i/n}.
 \end{equation*}
\end{definition}
Note that the truncation \emph{includes} the term $a_{k}x^{k/n}$ of $s(x)$. By convention, $s_0(x)=0$.
 

\begin{definition}\label{def:substitution-of-equation}
 The \emph{$k$-th substitution} (of $s(x)$, but this will always be implicit) in $P$ is the equation
 \begin{equation*}
 P_k\equiv A(x, y + s_k(x)) + B(x, y + s_k(x))(y_1+\sigma(s_k(x))) = A^k(x,y) + B^k(x,y)y_{1},
 \end{equation*}
 where $s_k(x)$ is the $k$-th truncation of $s(x)$ (thus, $P_0=P$). The \emph{total substitution} of $s$ in $P$ is the equation
 \begin{equation*}
 P_{\infty} \equiv A(x, y + s(x)) + B(x, y + s(x))(y_1+\sigma(s(x))).
 \end{equation*}
As $P_k=A^{k}(x,y)+B^{k}(x,y)y_1$, the expressions $A^k_{\iota j}$, $B^k_{\iota j}$ will denote the corresponding coefficients of $A^k(x,y)$ and $B^k(x,y)$, following (\ref{eq:coefficients-sum}).
\end{definition}

Notice that if $R(x,y,y_1)=P_k$, then
\begin{equation*}
 R\big(x,y+a_{k+1}x^{(k+1)/n},y_1+\sigma(a_{k+1}x^{(k+1)/n})\big)=P_{k+1},
\end{equation*}
 and the definition of $P_k$ can be made iterative, substitution by substitution. This is Newton and Cramer's construction, which allowed 
 the latter to find approximate solutions of algebraic equations. There are some geometric concepts required for the proper application of the Newton polygon to solving covered equations. 

\begin{definition}\label{def:supporting}
 Given $\mu\in \mathbb{R}_{>0}$, let $L_{\mu}(P)$ denote the line 
 \begin{equation*}
 L_{\mu}(P) \equiv \bigg\{ (i,j) \in {\mathbb R}^{2} : j = \frac{-1}{\mu} i + \alpha \bigg\} 
 \end{equation*}
 with $\alpha$ maximum satisfying the following property: if
 $L^+_{\mu}(P) \equiv \big\{(i,j)\ \big|\ j \geq -i/\mu + \alpha \big\} $, then
 $ \mathcal{N}(P)\subset L^{+}_{\mu}(P) $. This line will be called the \emph{supporting
 line of $\mathcal{N}(P)$ of co-slope ~$\mu$}.
\end{definition}
Notice that $L_{\mu}(P)\cap \mathcal{N}(P)$ is either a vertex of $\mathcal{N}(P)$ or a
side:
\begin{definition}
 The \emph{element of co-slope $\mu$} of $\mathcal{N}(P)$ is
 \begin{equation*}
 E_{P,\mu} = L_{\mu}(P) \cap \mathcal{N}(P),
 \end{equation*}
 and it will be called either the \emph{vertex of co-slope $\mu$} or the \emph{side of
 co-slope $\mu$} if $E_{P,\mu}$ is a single point or otherwise. We shall denote
 $E_{k , \mu} = E_{P_{k},\mu}$ and, because $E_{k,k/n}$ will be our main concern, $E_k = E_{k, k/n}$.
\end{definition}
 We stress the fact that $E_k$ is the element of co-slope $k/n$ \emph{after applying the $k$-th substitution}, whereas $E_{k-1,k/n}$ is the element of the same co-slope \emph{just before} that substitution has been applied. We refer the reader to the later example of Subsection \ref{subs:example} for this important distinction. 
\begin{example}\label{ex:elements}
 In Figure \ref{fig:newton-polygon-1}, the elements $E_{P,\mu}$ for $\mu\in[1/2, 2]$ are the following. To begin with, $E_{P,1/2}$ is the segment joining $(0,4)$ and $(1,2)$ that corresponds to the dashed line $L_{1/2}(P)$). Then, for $1/2<\mu<2$, $E_{P,\mu}$ is just the vertex $(1,2)$. Finally, $E_{P,2}$ is the segment from $(1,2)$ to $(5,0)$, which contains the point $(3,1)$.
\end{example}
 Let $\mu$ be a co-slope and $E_{P,\mu}$ the corresponding element of $P$. We can unify the notation for differential and $q$-difference equations using the $\delta$ coefficient:
\begin{definition}\label{def:delta}
 The $\delta$ coefficient corresponding to the co-slope $\mu$ is the number:
 \begin{equation*}
 \delta_{\mu} = \left\{
 \begin{array}{ll}
 \mu & \text{if}\ P\ \text{is a differential equation}\\
 q^{\mu} & \text{if}\ P\ \text{is a}\ q\text{-difference equation}
 \end{array}
 \right.
 \end{equation*}
\end{definition}
This\ allows us to unify the first key concept:
\begin{definition}
 The \emph{initial polynomial} of $P$ of co-slope $\mu$ is $\Phi_{P,\mu}(C)$, given by:
 \begin{equation*}
 \Phi_{P,\mu}(C) = \sum_{(\iota, j)\in E_{P,\mu}}
 (A_{\iota j} + \delta_{\mu} B_{\iota j})C^{j} . 
 \end{equation*}
 When working with $P_k$, we shall normally use the notation $\Phi_{k , \mu}(C)$ instead of $\Phi_{P_{k},\mu}(C)$.
 \end{definition}
 \begin{definition}
If the initial polynomial is identically zero, i.e.
$\Phi_{ k-1 ,k/n}(C)\equiv 0$, then the exponent $k/n$ of $s(x)$, the element $E_{ k-1 ,k/n}$ and
the co-slope $k/n$ are called \emph{dicritical}.
\end{definition} 
 In subsection \ref{subs:dicritical} we relate this notion to the geometric concept of \emph{dicritical foliation}. 

The following results are classical for differential equations \cite{CanoJ2,CanoJ} and trivially extended to $q$-difference equations (see \cite{barbe2020qalgebraic,cano2012power} for instance). Fix $P$ and a solution $s(x)$ as above.

\begin{lemma}\label{lem:coefficients-are-zeros}
 For any $k>0$, we have
 \begin{equation*}
 \Phi_{ k-1 ,k/n}(a_k)=0.
\end{equation*}
\end{lemma}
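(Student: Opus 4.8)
The plan is to substitute the remainder $\tilde s(x):=s(x)-s_{k-1}(x)=\sum_{i\geq k}a_ix^{i/n}$ into the equation $P_{k-1}=0$ and read off the coefficient of the lowest power of $x$ occurring in the result. First I would verify that $\tilde s(x)$ is a solution of $P_{k-1}=0$: writing $P_{k-1}(x,y,y_1)=A(x,y+s_{k-1}(x))+B(x,y+s_{k-1}(x))\big(y_1+\sigma(s_{k-1}(x))\big)$ and using the linearity of $\sigma$, so that $\sigma(\tilde s)=\sigma(s)-\sigma(s_{k-1})$, evaluation at $y=\tilde s(x)$, $y_1=\sigma(\tilde s(x))$ returns $A(x,s(x))+B(x,s(x))\sigma(s(x))=0$.

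Next I would locate the relevant exponent. Put $\mu=k/n$ and let $\alpha$ be the intercept defining the supporting line $L_{\mu}(P_{k-1})$; then $\omega:=\mu\alpha$ is exactly the minimum of $\iota+\mu j$ over $\mathcal{C}(P_{k-1})$, and the points attaining it are precisely those of $E_{k-1,\mu}$. Assuming $a_k\neq 0$, so $\tilde s(x)=a_kx^{\mu}+\hot$ and, in both the differential ($o_{\sigma}=1$, $\delta_{\mu}=\mu$) and the $q$-difference ($o_{\sigma}=0$, $\delta_{\mu}=q^{\mu}$) cases, $\sigma(\tilde s(x))=\delta_{\mu}a_kx^{\mu-o_{\sigma}}+\hot$, I would expand $P_{k-1}(x,\tilde s(x),\sigma(\tilde s(x)))$ monomial by monomial. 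A monomial $A^{k-1}_{\iota j}x^{\iota}y^{j}$ becomes $A^{k-1}_{\iota j}a_k^{j}x^{\iota+\mu j}+\hot$, so it contributes $A^{k-1}_{\iota j}a_k^{j}$ to the coefficient of $x^{\omega}$ when $(\iota,j)\in E_{k-1,\mu}$ and nothing otherwise (cloud points strictly above $L_{\mu}$ feed only exponents $>\omega$). For the $B$-part I would unwind the support conventions $\iota'=\iota-o_{\sigma}$, $j'=j+1$: the monomial attached to $(\iota',j')\in\supp(B^{k-1})$ is $B^{k-1}_{\iota' j'}x^{\iota'+o_{\sigma}}y^{j'-1}y_1$, which upon substitution gives $B^{k-1}_{\iota' j'}\delta_{\mu}a_k^{j'}x^{\iota'+\mu j'}+\hot$ — the powers of $x$ conspire so the abscissa shift cancels — hence a contribution $\delta_{\mu}B^{k-1}_{\iota' j'}a_k^{j'}$ to the coefficient of $x^{\omega}$ exactly when $(\iota',j')\in E_{k-1,\mu}$.

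Summing, the coefficient of $x^{\omega}$ in $P_{k-1}(x,\tilde s(x),\sigma(\tilde s(x)))$ equals $\sum_{(\iota,j)\in E_{k-1,\mu}}(A^{k-1}_{\iota j}+\delta_{\mu}B^{k-1}_{\iota j})a_k^{j}=\Phi_{k-1,\mu}(a_k)$, and since that series is identically zero the coefficient vanishes, which is the claim. Finally I would dispose of the degenerate case $a_k=0$ (so $\tilde s$ starts at a power $>\mu$, or $\tilde s\equiv 0$): the only cloud point that can still feed $x^{\omega}$ is one of the form $(\omega,0)$ — note $\supp(B^{k-1})$ never meets $j=0$ because of the shift $j'=j+1\geq 1$ — and both the coefficient of $x^{\omega}$ and $\Phi_{k-1,\mu}(0)$ reduce to $A^{k-1}_{\omega 0}$, or to $0$ if $(\omega,0)\notin\mathcal{C}(P_{k-1})$ (in particular, when $\tilde s\equiv 0$ one has $A^{k-1}(x,0)=A(x,s(x))+B(x,s(x))\sigma(s(x))=0$). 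The main obstacle is purely bookkeeping: keeping the $\delta_{\mu}$ factor and the abscissa/ordinate shifts of $\supp(B)$ consistent so that the differential and $q$-difference computations both collapse to the single formula $\Phi_{k-1,\mu}(a_k)$.
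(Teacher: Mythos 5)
Your argument is correct, and it is the standard Newton--Puiseux argument. The paper itself does not prove this lemma---it only cites it as classical from \cite{CanoJ2,CanoJ} and \cite{barbe2020qalgebraic,cano2012power}---so there is no proof in the text to compare against, but your reasoning is exactly the expected one: the tail $\tilde s=s-s_{k-1}$ solves $P_{k-1}=0$ by additivity of $\sigma$, and the coefficient of the minimal exponent $x^{\omega}$ in $P_{k-1}(x,\tilde s,\sigma(\tilde s))$ is precisely $\Phi_{k-1,k/n}(a_k)$, which must therefore vanish. You have been careful with the key bookkeeping points (the $o_\sigma$ shift in $\supp(B)$ cancelling against the shift in $\sigma(\tilde s)$, the $\delta_\mu$ factor, and the degenerate case $a_k=0$), so nothing is missing.
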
 

A cornerstone of the method is that the operation with the term $a_{k}$ does not modify
the Newton polygon ``up to the part corresponding to $a_{k-1}$'' :

 \begin{lemma}\label{lem:same-newton-polygon}
 If $(\iota, t)$ is the topmost point of $E_{k-1,k/n}$, then, for any
 $ l \geq k$:
 \begin{equation*}
 \mathcal{N}(P_{k-1}) \cap (\mathbb{R}_{\geq -1}\times \mathbb{R}_{\geq t}) =
 \mathcal{N}(P_{ l}) \cap (\mathbb{R}_{\geq -1}\times \mathbb{R}_{\geq t}),
 \end{equation*}
 that is, both polygons are equal at $(\iota ,t)$ and above. Even more, if
 \begin{equation*}
 P_{k-1} = A^{k-1}(x,y) + B^{k-1}(x,y)y_{1},
 \end{equation*}
 then $A^{k-1}_{\iota t}=A^{ l}_{\iota t}$ and $B^{k-1}_{\iota t}=B^{ l}_{\iota t}$
 for any $ l \geq k-1$.
 As a consequence, for all $k\geq 1$ and $l\geq k$, we have
 \begin{equation}\label{eq:E-k-stays}
 E_{k}=E_{l,k/n} = E_{P_{\infty},k/n} .
 \end{equation}
\end{lemma}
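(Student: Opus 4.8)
The plan is to prove Lemma~\ref{lem:same-newton-polygon} by analyzing, monomial by monomial, how the single substitution $y \mapsto y + a_k x^{k/n}$, $y_1 \mapsto y_1 + \sigma(a_k x^{k/n})$ affects the cloud of points, and then iterating from $k-1$ up to any $l$. The underlying geometric principle is that substituting $y \mapsto y + a_k x^{k/n}$ into a monomial $x^\iota y^j$ produces, via the binomial expansion, the terms $x^\iota \binom{j}{h} a_k^{j-h} x^{(j-h)k/n} y^h$ for $0 \le h \le j$; every such term lies on the ray from $(\iota, j)$ in direction $(k/n, -1)$, i.e. on the line through $(\iota,j)$ of co-slope $k/n$. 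Hence all new points created by the $k$-th substitution lie \emph{weakly below} (in $j$) and \emph{to the right} of the points already present, sliding down lines of co-slope exactly $k/n$. The same is true for the $q$-difference case once one tracks the effect of $\sigma$ on $B(x,y)y_1$: the operator multiplies the relevant coefficients by powers of $q^{k/n}=\delta_{k/n}$ but does not move support points, and the shift conventions in \eqref{eq:coefficients-sum} are already built into the cloud.

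Concretely, I would proceed as follows. First, fix $k$ and let $(\iota, t)$ be the topmost point of $E_{k-1,k/n}$. By Definition~\ref{def:supporting} and the construction of $E_{k-1,k/n}$, every point of $\mathcal{C}(P_{k-1})$ lies in the half-plane $L^+_{k/n}(P_{k-1})$, and the points lying \emph{on} the line $L_{k/n}(P_{k-1})$ are exactly those in $E_{k-1,k/n}$, all of which have second coordinate $\le t$. Now apply the substitution $y\mapsto y+a_k x^{k/n}$. By the binomial observation above, each point $(\alpha,\beta)\in\mathcal{C}(P_{k-1})$ spawns points $(\alpha + (\beta-h)k/n,\ h)$ for $0\le h\le\beta$; these are collinear with $(\alpha,\beta)$ of co-slope $k/n$, so they remain in $L^+_{k/n}$ and, crucially, any of them with second coordinate $> t$ must actually coincide with $(\alpha,\beta)$ itself and lie strictly above the line $L_{k/n}(P_{k-1})$ — because if $(\alpha,\beta)$ is strictly above the line the whole ray meets $\{j>t\}$ only in the segment strictly above the line that projects into existing territory already counted by $\mathcal{N}(P_{k-1})$ at height $>t$, while if $(\alpha,\beta)$ is on the line then $\beta\le t$ and no spawned point exceeds $t$. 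This shows $\mathcal{N}(P_{k-1})$ and $\mathcal{N}(P_k)$ agree on $\{j\ge t\}$, and moreover that the coefficients $A^{k-1}_{\iota t}, B^{k-1}_{\iota t}$ are unchanged: the only contributions to the point $(\iota,t)$ after substitution come from points of $\mathcal{C}(P_{k-1})$ on the ray through $(\iota,t)$ of co-slope $k/n$ with second coordinate $\ge t$, and by the previous sentence that forces the contributing point to be $(\iota,t)$ itself with $h=t$, i.e. the binomial coefficient is $\binom{t}{t}=1$ and $a_k^{0}=1$. Then I would iterate: since $s_l(x) = s_{k-1}(x) + \sum_{k\le i\le l} a_i x^{i/n}$ and each successive substitution with $a_i x^{i/n}$ has co-slope $i/n \ge k/n$, the same argument applied repeatedly (using the iterative description of $P_k$ noted just before Definition~\ref{def:supporting}) shows nothing above height $t$ ever changes, giving the stated equality of polygons and of the coefficients $A^{l}_{\iota t}, B^{l}_{\iota t}$ for all $l\ge k-1$.

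Finally, for the last assertion \eqref{eq:E-k-stays}: taking $t$ to be the height of the topmost point of $E_k$ (which, by the ``even more'' part just proved with the roles shifted by one, equals the topmost point of $E_{k,k/n}$ after the $k$-th substitution), the agreement of $\mathcal{N}(P_k)$ and $\mathcal{N}(P_l)$ above height $t$ forces $L_{k/n}(P_l)$ to meet $\mathcal{N}(P_l)$ in the same element, so $E_{l,k/n}=E_k$ for all $l\ge k$; passing to the limit (the coefficients at any fixed point stabilize) gives $E_{P_\infty,k/n}=E_k$ as well. The main obstacle I anticipate is the bookkeeping in the borderline case where $(\alpha,\beta)$ lies exactly on the supporting line $L_{k/n}(P_{k-1})$ but strictly below height $t$ within $E_{k-1,k/n}$: one must check that the spawned points, although they slide down to heights $<t$, never create a point at height $\ge t$ to the left of the existing polygon, and that cancellations among the various binomial contributions at the boundary point $(\iota,t)$ do not occur — this is exactly where the maximality of $\alpha$ in Definition~\ref{def:supporting} and the fact that $\Phi_{k-1,k/n}$ is a genuine (possibly nonzero) polynomial get used, and it is the one place where the $q$-difference bookkeeping (the factor $\delta_{k/n}$ multiplying $B$-coefficients) has to be threaded through carefully rather than waved away.
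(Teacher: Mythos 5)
The paper itself gives no proof of Lemma~\ref{lem:same-newton-polygon} --- it is stated as a classical fact with citations to \cite{CanoJ2,CanoJ,barbe2020qalgebraic,cano2012power} --- so your proposal can only be judged on its own internal correctness. The overall strategy you adopt is the standard one and is the right idea: the substitution $y\mapsto y+a_kx^{k/n}$, $y_1\mapsto y_1+\sigma(a_kx^{k/n})$ moves every cloud point only along rays of co-slope $k/n$ pointing down and to the right (and you correctly note, after unwinding the shifting convention of \eqref{eq:coefficients-sum}, that the $B$-to-$A$ cross-terms coming from $\sigma(a_kx^{k/n})$ also move in that same direction, in both the differential and $q$-difference cases). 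The argument that the coefficient \emph{at} $(\iota,t)$ is unchanged --- namely, any other contributing point would lie on $L_{k/n}(P_{k-1})$ at height $>t$, contradicting that $(\iota,t)$ is the topmost point of $E_{k-1,k/n}$ --- is also correct.

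However, there is a genuine gap in the step where you conclude that the two polygons agree on $\{j\ge t\}$. The sentence ``any of them with second coordinate $>t$ must actually coincide with $(\alpha,\beta)$ itself'' is simply false: if $(\alpha,\beta)$ lies strictly above $L_{k/n}(P_{k-1})$ with $\beta>t$, the spawned points $(\alpha+s\,k/n,\beta-s)$ for $1\le s<\beta-t$ have second coordinate $>t$ and do not coincide with $(\alpha,\beta)$. The phrase that follows, that these spawned points ``project into existing territory already counted by $\mathcal{N}(P_{k-1})$ at height $>t$,'' is precisely the fact that needs to be proved, not a justification. Two concrete things are missing. First, one must show that any spawned point $(\alpha',\beta')$ with $\beta'\ge t$ coming from a cloud point $(\alpha,\beta)$ strictly above $L_{k/n}(P_{k-1})$ lies in $\mathcal{N}(P_{k-1})$: this follows from convexity, because the segment from $(\alpha,\beta)$ to $(\iota,t)$ (both in the polygon) has co-slope $\le k/n$, so at height $\beta'$ its abscissa is $\le \alpha'$, and then $(\alpha',\beta')$ lies in the horizontal translate, hence in the polygon. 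Second, one must show that the coefficient at every \emph{boundary vertex} of $\mathcal{N}(P_{k-1})$ at height $\ge t$ (not only at $(\iota,t)$) is unchanged, otherwise the polygon could shrink above height $t$; this again follows by a convexity argument --- if some $(\alpha-s\,k/n,\beta+s)\in\mathcal{C}(P_{k-1})$ contributed nontrivially to a vertex $(\alpha,\beta)$, the segment joining $(\alpha-s\,k/n,\beta+s)$ to $(\iota,t)$ would pass strictly to the left of $(\alpha,\beta)$ at height $\beta$ (since $(\alpha-s\,k/n,\beta+s)$ cannot lie on $L_{k/n}(P_{k-1})$), contradicting that $(\alpha,\beta)$ is a vertex. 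Without these two verifications, the equality of Newton polygons on $\{j\ge t\}$ does not follow from what is written. The iteration to arbitrary $l\ge k$ and the passage to $P_\infty$ are then fine once the single-step statement is in place.
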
 

Finally, solutions are characterized by their ``flattening'' of the Newton polygon from below:
\begin{theorem*}[see \cite{CanoJ2, barbe2020qalgebraic}]\label{the:solutions-flat}
 Let $P\equiv A(x,y)+B(x,y)y_1=0$ be a covered equation and $s(x)$ a Puiseux series with $a_0=0$ (no independent term). The following statements are equivalent:
 \begin{enumerate}
 \item The power series $s(x)$ is a solution of $P$,
 \item The power series $0$ is a solution of $P_{\infty}$,
 \item The Newton polygon of $P_{\infty}$ has a horizontal side at height greater than $0$.
 \end{enumerate}
\end{theorem*}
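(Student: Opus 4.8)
The plan is to prove the cycle as two separate equivalences, $(1)\Leftrightarrow(2)$ and $(2)\Leftrightarrow(3)$, each obtained as a short chain of ``if and only if'' statements by unwinding definitions; no genuinely new idea is involved. Before starting, I would record that the total substitution $P_\infty$ is a well-defined object: because $a_0=0$ and $A(0,0)=B(0,0)=0$, we have $\ord(s(x))>0$, so after the substitution $y\mapsto y+s(x)$ each monomial $x^\mu y^l$ picks up contributions from only finitely many terms of $A$ and $B$ (since $s(x)^{j-l}$ has order at least $(j-l)\,\ord(s(x))$), and multiplication by the single Puiseux series $\sigma(s(x))$, which has a lowest-order term, preserves this finiteness. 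This is the only point at which the hypothesis $a_0=0$ is used.

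For $(1)\Leftrightarrow(2)$: by definition $0$ is a solution of $P_\infty$ precisely when $P_\infty(x,0,\sigma(0))=0$. Since $\sigma(0)=0$ for both operators in \eqref{eq:sigma}, putting $y=0$ and $y_1=\sigma(0)=0$ in the defining expression of $P_\infty$ yields $P_\infty(x,0,0)=A(x,s(x))+B(x,s(x))\sigma(s(x))$, and this vanishes exactly when $s(x)$ is a solution of $P$.

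For $(2)\Leftrightarrow(3)$: writing $P_\infty=A^\infty(x,y)+B^\infty(x,y)y_1$ in the normal form of Definition \ref{def:substitution-of-equation} and collecting the terms free of $y_1$ gives $A^\infty(x,y)=A(x,y+s(x))+B(x,y+s(x))\sigma(s(x))$, so $0$ solves $P_\infty$ iff $A^\infty(x,0)\equiv 0$, iff $y$ divides $A^\infty(x,y)$, iff $\supp(A^\infty)\subseteq\{(\iota,j):j\geq 1\}$. The point that makes the geometry work is that, by the convention of \eqref{eq:coefficients-sum}, the support $\supp(B^\infty)$ of the coefficient of $y_1$ in any covered equation is the exponent set of $B^\infty(x,y)$ pushed one step up (and, for differential equations, one step to the left) because of the factor $y_1$; hence $\supp(B^\infty)\subseteq\{(\iota,j):j\geq 1\}$ unconditionally, and so the cloud $\mathcal{C}(P_\infty)=\supp(A^\infty)\cup\supp(B^\infty)$ contains a point with $j=0$ if and only if $A^\infty(x,0)\not\equiv 0$. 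To conclude, I would note that $\mathcal{N}(P_\infty)$ has exactly one horizontal edge, namely the unbounded ray at height $h:=\min\{j:(\iota,j)\in\mathcal{C}(P_\infty)\}$ (the minimum exists because all second coordinates lie in $\mathbb{N}$): indeed every point of $\mathcal{Q}(P_\infty)$ has $j\geq h$, whereas a cloud point of height $h$ already contributes a full horizontal ray at that height. Thus $(3)$, which demands that this horizontal side lie at height $>0$, is equivalent to $h\geq 1$, hence to $\mathcal{C}(P_\infty)$ having no point with $j=0$, hence to $A^\infty(x,0)\equiv 0$, hence to $(2)$.

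Since the whole proof is bookkeeping, I do not anticipate a real obstacle; the two points that do require care are (a) the well-definedness of $P_\infty$ noted at the start, where $\ord(s(x))>0$ is essential, and (b) rendering the phrase ``horizontal side at height $>0$'' as the combinatorial condition ``no point of the cloud lies on the horizontal axis $j=0$'', which uses that the Newton polygon is an unbounded convex region with a single horizontal boundary ray sitting at the least $j$ occurring in $\mathcal{C}(P_\infty)$. The degenerate case $A\equiv B\equiv 0$, where $P_\infty\equiv 0$ and all three statements hold by convention, should be set aside at the outset.
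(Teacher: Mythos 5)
Your proof is correct, and since the paper does not give its own argument — it cites \cite{CanoJ2, barbe2020qalgebraic} — your definitional unwinding is precisely the expected route. Both the evaluation step $(1)\Leftrightarrow(2)$ (using $\sigma(0)=0$) and, in $(2)\Leftrightarrow(3)$, the identification of the unique horizontal side of $\mathcal{N}(P_\infty)$ with the ray at height $\min\{j:(\iota,j)\in\mathcal{C}(P_\infty)\}$ are handled properly, as are the two subtleties you flagged: well-definedness of $P_\infty$ via $\ord(s(x))>0$, and $\supp(B^\infty)\subseteq\{(\iota,j):j\geq 1\}$ as forced by the indexing convention in \eqref{eq:coefficients-sum}.
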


Therefore, if $s(x)=\sum a_{i}x^{i/n}$ is a solution of $P$, then $a_i$ is a root of the
corresponding initial polynomial $\Phi_{ i-1 ,i/n}(C)$. If this holds for each $i$, then
$s(x)$ is indeed a solution of $P$. Due to Lemma \ref{lem:same-newton-polygon}, the
polygon construction is, thus, an iterative process in which each coefficient $a_{i}$ is a
zero of the initial polynomial of the unique element of $P_{i-1}$ of co-slope
$i/n$. Furthermore, also by Lemma \ref{lem:same-newton-polygon}, this latter element is \emph{to the right and not above} the element
of $P_{i-1}$ of co-slope $(i-1)/n$.

The following definition covers all the main invariants associated to $P$ and $s(x)$.
 \begin{definition}\label{def:top-and-bottom}
 Let $P\equiv A(x,y)+B(x,y)y_1=0$ be a covered equation and $s(x)$ be a
 Puiseux series with order $\ord (s(x))>0$.
 The \emph{height of $P$}, denoted $H(P)$, is the ordinate of the leftmost vertex of $\mathcal{N}(P)$.
 Consider a co-slope $\mu$ and the corresponding element $E_{P,\mu}$
 of $P$ of co-slope $\mu$. The \emph{top (or height)} of $E_{P,\mu}$,
 is the highest ordinate of the points of $E_{P,\mu}$, and the
 \emph{bottom} of $E_{P,\mu}$ is the lowest. They will be denoted as
 $\Top(E_{P,\mu})$ and $\Bot(E_{P,\mu})$, respectively. We denote $H(P,s(x))=\Top(E_{P,\mu})$ for
 $\mu=\ord(s(x))$. Finally, the \emph{multiplicity}
 of $P$ at the origin is
 \begin{displaymath}
 \nu_0(P)=\min\{\ord_{(x,y)} (A(x,y)),\ord_{(x,y)} (B(x,y))\}.
 \end{displaymath}
\end{definition}
\begin{remark}\label{rem:heights} 
As $\mathcal{N}(P)$ has a finite number of sides, we have 
\begin{itemize}
\item The map $\mu \to \Top(E_{P,\mu})$ is a decreasing function from ${\mathbb R}^{+}$ to $\mathbb{Z}$.
\item For any $\mu\in \mathbb{R}^{+}$, we have $H(P) \geq
 \Top(E_{P,\mu})$, in particular for any $s(x)$,
 \[H(P)\geq \Top(E_{P,\mu})\geq H(P,s(x)),\quad 0<\mu\leq
 \ord(s(x)).\]
If, moreover, $\ord (s(x)) \geq 1$, then
\begin{displaymath}
 \nu_0(P)+1 \geq \Top(E_{P,1})\geq H(P,s(x)).
\end{displaymath}
\item There is $\mu_{0}>0$ such that $H(P)=\Top(E_{P,\mu})$ for any $0< \mu<\mu_0$.
\end{itemize}
\end{remark} 
We are interested, for a $1$-covered equation $P$, in bounding $H(P)$ and $H(P,s(x))$
from below in terms of the characteristic factors of a solution.
In lay terms, we wish to prove that an equation with a complicated solution must already
be ``complicated'' where the complexity is measured by $H(P)$ or $H(P,s(x))$. In the last section, devoted to the case of differential
equations, we shall see how, up to a linear change of coordinates, the multiplicity of the associated foliation at $0\in \mathbb{C}^2$
is greater than or equal to $H(P,s(x)) -1$. 
We shall use our general results to bound this multiplicity from below.

Lemma \ref{lem:same-newton-polygon} implies the following property which we shall use freely:
\begin{lemma}\label{lem:top-less-bottom}
 For any $k\geq 0$,
 \begin{equation*}
 \begin{split}
 \Top(E_{k-1,k/n})= &\Top(E_{k})\geq \Bot(E_{k}) \geq \\
 &\Top(E_{k,(k+1)/n}) =
 \Top(E_{k+1}).
\end{split}
 \end{equation*}
\end{lemma}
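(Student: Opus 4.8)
The plan is to verify the five relations in the displayed chain from left to right, isolating the one that actually carries content. The middle relation $\Top(E_k)\ge\Bot(E_k)$ is immediate from the definitions. The two outer equalities are both instances of one statement, which I would state and prove separately: \emph{for every $m\ge 1$ the $m$-th substitution does not change the height of the element of co-slope $m/n$}, i.e. $\Top(E_{m-1,m/n})=\Top(E_{m,m/n})=\Top(E_m)$; applying it with $m=k$ gives the first equality, and with $m=k+1$ the last one. This also disposes of the degenerate case $k=0$: then $P_{-1}=P_0=P$ by the empty-truncation convention, so the first equality is trivially $\Top(E_{P,0})=\Top(E_{P,0})$, and the remaining part of the chain follows from the case $m=1$ together with $H(P)\ge\Top(E_{P,\mu})$ in Remark~\ref{rem:heights}. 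The last relation $\Bot(E_k)\ge\Top(E_{k,(k+1)/n})$ is a comparison inside the single convex polygon $\mathcal N(P_k)$ between the co-slopes $k/n<(k+1)/n$, which I will obtain from convexity alone.

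For the outer equalities, fix $m\ge 1$, set $t=\Top(E_{m-1,m/n})$, and let $(\iota,t)$ be the topmost point of $E_{m-1,m/n}$. Then $(\iota,t)\in\mathcal C(P_{m-1})$ is a vertex of $\mathcal N(P_{m-1})$ lying on the supporting line $L_{m/n}(P_{m-1})$, while every point of $\mathcal N(P_{m-1})$ of height $>t$ lies strictly above $L_{m/n}(P_{m-1})$. By Lemma~\ref{lem:same-newton-polygon}, $\mathcal N(P_{m-1})$ and $\mathcal N(P_m)$ coincide on $\mathbb R_{\ge -1}\times\mathbb R_{\ge t}$ and $A^{m-1}_{\iota t}=A^{m}_{\iota t}$, $B^{m-1}_{\iota t}=B^{m}_{\iota t}$; hence $(\iota,t)\in\mathcal C(P_m)$ and every point of $\mathcal N(P_m)$ of height $>t$ still lies strictly above $L_{m/n}(P_{m-1})$. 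Combine this with the observation that the $m$-th substitution $y\mapsto y+a_mx^{m/n}$, $y_1\mapsto y_1+\sigma(a_mx^{m/n})$ creates no point strictly below $L_{m/n}(P_{m-1})$: each monomial produced from a term of $P_{m-1}$ supported at a cloud point $(\mu,j)$ takes the same value under the linear form $(i,j)\mapsto i+(m/n)j$ as $(\mu,j)$ does, and the latter is at least the value of that form on $L_{m/n}(P_{m-1})$ because $(\mu,j)\in\mathcal N(P_{m-1})\subset L^{+}_{m/n}(P_{m-1})$. Thus $\mathcal N(P_m)\subset L^{+}_{m/n}(P_{m-1})$; since $(\iota,t)\in\mathcal N(P_m)\cap L_{m/n}(P_{m-1})$ this forces $L_{m/n}(P_m)=L_{m/n}(P_{m-1})$, so $E_m=\mathcal N(P_m)\cap L_{m/n}(P_{m-1})$ has $(\iota,t)$ as its topmost point, and $\Top(E_m)=t$.

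For $\Bot(E_k)\ge\Top(E_{k,(k+1)/n})$ I would argue within $\mathcal N(P_k)$: when $\mu$ is slightly larger than $k/n$, the element $E_{P_k,\mu}$ is the common vertex separating $E_k$ from $E_{k,(k+1)/n}$ on the staircase boundary of $\mathcal N(P_k)$, whose ordinate is $\Bot(E_k)$; since $\mu\mapsto\Top(E_{P_k,\mu})$ is decreasing (Remark~\ref{rem:heights}) and such $\mu$ satisfies $\mu<(k+1)/n$, we get $\Bot(E_k)=\Top(E_{P_k,\mu})\ge\Top(E_{P_k,(k+1)/n})=\Top(E_{k,(k+1)/n})$ — equivalently, this is just the convexity of that boundary. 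Chaining the five relations then proves the lemma. The step I expect to be the only genuine obstacle is the one used for the outer equalities, namely that the $m$-th substitution introduces no point strictly below $L_{m/n}(P_{m-1})$ while fixing the topmost contact point $(\iota,t)$; this is precisely the mechanism underlying Lemma~\ref{lem:same-newton-polygon}, so in the final write-up I would deduce it from that lemma rather than redo the weight bookkeeping, after which everything reduces to the elementary convexity comparison above.
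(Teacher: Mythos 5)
Your proof is correct and follows essentially the route the paper intends: the paper records this lemma without a separate proof as a direct consequence of Lemma~\ref{lem:same-newton-polygon} (which fixes the topmost contact point $(\iota,t)$ under the $m$-th substitution) together with the convexity of $\mathcal{N}(P_k)$ encoded in the decreasing map $\mu\mapsto\Top(E_{P_k,\mu})$ of Remark~\ref{rem:heights}. Your write-up simply unpacks those two ingredients, and your closing observation, that the only nontrivial step is the preservation of the topmost point and that it is exactly the mechanism of Lemma~\ref{lem:same-newton-polygon}, is precisely what the paper leaves implicit.
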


 \begin{remark}\label{rem:top-less-than-bottom}
 Let $(\iota_{0}, 0)$ be the point of intersection of the $x$-axis and $L_{k/n} (P_{k})$.
 Later on we shall see that $A_{\iota_{0} 0}^{k} = \Phi_{ k-1 ,k/n}(a_k)$ in Equation \eqref{eq:alpha-beta-descent-1}.
 Thus, Lemma \ref{lem:coefficients-are-zeros} gives $\Top (E_k) \geq \Bot(E_{k}) \geq 1$ for any
 $k\geq 1$: after each substitution, the element $E_{k}$ does not meet the $x$-axis.
 On the other hand, Lemma \ref{lem:top-less-bottom}
 implies $\Bot(E_{k})\geq \Top(E_{l-1, l/n})$ for $k< l$. Moreover, if for
 some $j$ with $k<j< l$, the element $E_{j}$ contains more than one vertex, then
 $\Bot(E_{k}) > \Top(E_{ l-1, l/n})$ because, in this case,
 \[
 \Bot(E_{k})\geq \Top(E_{j})>\Bot(E_{j})\geq \Top(E_{ l-1, l/n}).
 \]
\end{remark}
This last remark is quite relevant because it will provide a \emph{descent} argument for our bounds.
We shall see in Lemma \ref{lem:new-puiseux-exponent-new-side} that all characteristic exponents $k/n$, except possibly the last one, 
give rise to sides in the Newton polygon, i.e. $\Top (E_k) > \Bot (E_k)$.
Moreover, one of our main results (Proposition \ref{pro:descent-step}) provides a qualitative estimate of the gap between 
$\Bot(E_k)$ and $\Top (E_k)$.
 

 \subsection{An example}\label{subs:example}
For the benefit of the reader, we include an exhaustive example in this section, in order to clarify the technique, the notation and some of the results.

Consider the differential equation associated with the following polynomial:
\begin{equation}
 \begin{split}
 P_0 = y^4+4 y^3 x+5 y^2 x^2+2 y x^3+y x^4+4 x^5+x^7 + \\
 (-y^3 x-4 y^2 x^2-5 y x^3-2 x^4+3 x^5)y_1.
\end{split}
\label{eq:full-example}
\end{equation}
We know in advance -- this is the initial assumption in this work -- that $P_0$ admits a solution with the following Puiseux expansion:
\begin{equation*}
 s(x) = - x - \sqrt{11}x^{3/2} - \frac{121}{30}x^2 + \cdots
\end{equation*}
where the exponents of the remaining terms belong to $\frac{1}{2}\mathbb{Z}$ and are greater than $2$. Thus, $e_1 = 3$ is the single characteristic exponent of $s(x)$.
So, setting $n=2$, we have $a_2=-1$, $a_3=-\sqrt{11}$, $a_4=-\frac{121}{30}$. The clouds of points, Newton polygons corresponding to each substitution, and their respective 
elements are depicted in Figure \ref{fig:full-example}. These substitutions are computed in the following paragraphs. Recall that $\delta_{k/n}=k/n=k/2$ because $P_0$ is differential 
and $n=2$.

\begin{enumerate}
\item The first exponent is $1=2/2$, so that $k=2$. Thus, the relevant element of
 $\mathcal{N}(P_1)=\mathcal{N}(P_0)$ is $E_{1,2/2}$. The initial polynomial is
 \begin{equation*}
 \Phi_{1,2/2}(C) = (1-1)C^{4} + (4-4)C^3 + (5-5)C^{2} + (2-2)C \equiv 0,
 \end{equation*}
 that is, $E_{1,2/2}$ is a dicritical element.
\item Once the substitution $y= y-x$ is performed, we obtain
 \begin{equation*}
 P_2 \equiv
 y^4+x y^3+x^4 y+x^7 + (-x y^3-x^2 y^2+3 x^5)y_1,
 \end{equation*}
 whose element $E_2:=E_{2,2/2}$ of co-slope $1$ is, in this case, a shorter subsegment of
 $E_{1,2/2}$. Notice that it might have been a single point or, if $E_{1,2/2}$ were shorter, it might
 have been longer but by Remark \ref{rem:top-less-than-bottom}, $\Bot(E_{2})\geq
 1$, in any case. Following $s(x)$, the next relevant element is $E_{2,3/2}$, which corresponds to
 the single characteristic exponent $e_{1}=3$, which gives $r_1=2$. The initial
 polynomial $P_{2,3/2}(C)$ is
 \begin{equation*}
 \Phi_{2,3/2}(C) = -\frac{1}{2}C^{3} + \frac{11}{2}C,
 \end{equation*}
 whose roots are $C=0$ and $C=\pm \sqrt{11}$, that include,
 certainly, $a_3=-\sqrt{11}$.
\item Performing the substitution $y = y - \sqrt{11} x^{3/2}$ corresponding to $a_{3}x^{3/2}$, we obtain
 \begin{align*}
 &P_3 = \\
 &y^4
 -\tfrac{5 \sqrt{11}}{2}x^{{{3}\over{2}}} y^3
 -\tfrac{3 \sqrt{11}}{2}x^{{{5}\over{2}}} y^2
 +x y^3+\tfrac{33}{2} x^3 y^2
 +x^4 y
 +\tfrac{11^{{{3}\over{2}}}}{2} x^{{{9}\over{2}}} y
 -\tfrac{121}{2}x^{6}
 +x^7
 +\\
 &(-x y^3
 -x^2 y^2
 +3 \sqrt{11} x^{{{5}\over{2}}} y^2
 +2\sqrt{11} x^{{{7}\over{2}}} y
 -33 x^4 y
 -8 x^5
 +11^{{{3}\over{2}}}x^{{{11}\over{2}}}
 )y_{1},
 \end{align*}
 whose element $E_{3}$ is, in this case, of the same length as $E_{2,3/2}$ but
 contains a new point with non-integral $x$-coordinates: $(5/2,2)$. As $e_{1} =3$ is a
 characteristic exponent, there must be at least one such point in the cloud, as we shall
 show in Lemma \ref{lem:new-puiseux-exponent-new-grid}. The element $E_{3,4/2}$
 corresponding to $a_4$ is the side joining $(4,1)$ and $(6,0)$. The initial polynomial
 is
 \begin{equation*}
 \Phi_{3,4/2}(C) = -15C - \frac{121}{2},
 \end{equation*}
 whose unique root is, certainly, $a_4=-\frac{121}{30}$.
\item Finally, after the substitution corresponding to $a_4$, we obtain $P_4$, whose Newton polygon is also depicted.
\end{enumerate}
Notice that the Newton polygon $\mathcal{N}(P_{k-1})$ coincides with $\mathcal{N}(P_l)$ for $l\geq k$ from $\Top(E_{k-1,k/n})$
up, as per Lemma \ref{lem:same-newton-polygon}. Also, $E_{k,k/n}=E_{l,k/n}$ for $l\geq k$, as in the diagram corresponding to $\mathcal{N}(P_4)$. 
\noindent
\vspace*{3pt}\\
\begin{figure}
 \centering
 \begin{tabular}{cc}
 \centering
 \def\points{( 0 , 4 ) , ( 1 , 3 ) , ( 2 , 2 ) , ( 3 , 1 ) , ( 4 , 1 ) , ( 5 , 0 ) , ( 7 , 0 )}
 \renewcommand{\arraystretch}{0.1}
 \begin{tabular}{c}
 \includegraphics{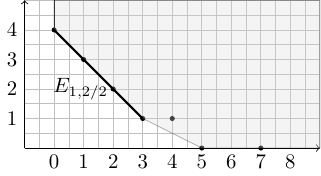}\\
 $\mathcal{N}(P_1)$.
\end{tabular}
&
 \def\points{( 0 , 4 ) , ( 1 , 3 ) , ( 4 , 1 ) , ( 7 , 0 )}
\renewcommand{\arraystretch}{0.1}
\begin{tabular}{c}
 \includegraphics{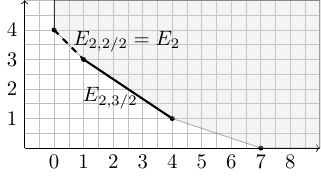}\\
 {$\mathcal{N}(P_2)$}
\end{tabular}
 \\
\\
\vspace*{10pt}
 \def\points{( 0 , 4 ) , ( 1 , 3 ) , ( 3/2 , 3 ) , ( 5/2 , 2 ) , ( 3 , 2 ) , ( 4 , 1 ) , ( 9/2 , 1 ) , ( 6 , 0 ) , ( 7 , 0 )}
\renewcommand{\arraystretch}{0.1}
\begin{tabular}{c}
 \includegraphics{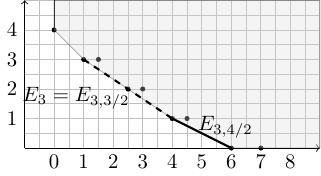}\\	
 {$\mathcal{N}(P_3)$}
\end{tabular}
&
\def\points{( 0 , 4 ) , ( 1 , 3 ) , ( 3/2 , 3 ) , ( 2 , 3 ) , ( 5/2 , 2 ) , ( 3 , 2 ) , ( 7/2 , 2 ) , ( 4 , 1 ) , ( 4 , 2 ) , ( 9/2 , 1 ) , ( 5 , 1 ) , ( 11/2 , 1 ) , ( 6 , 1 ) , ( 13/2 , 0 ) , ( 7 , 0 ) , ( 15/2 , 0 ) , ( 8 , 0 )}
\renewcommand{\arraystretch}{0.1}
\begin{tabular}{c}
 \includegraphics{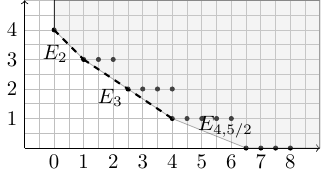}\\
 {$\mathcal{N}(P_4)$}
\end{tabular}
\end{tabular}
\caption{Newton polygons and relevant elements for $P_0$ in \eqref{eq:full-example}. Notice that $E_{k,k/n}$ is later referred to as $E_k$.}
\label{fig:full-example}
\end{figure}
\subsection{Characteristic exponents}
A remarkable property of the characteristic exponents of $s(x)$ in terms of the Newton
polygon is that each one -- except possibly the last one --, say $k/n$, gives rise to a \emph{whole side}
in $\mathcal{N}(P_k)$ and, by Lemma
\ref{lem:same-newton-polygon}, in $\mathcal{N}(P_l)$ for $l>k$. This fact, already noted in item (3) of
the example in Subsection \ref{subs:example}, and which we now prove, is essential to find
our bounds.
\begin{lemma}\label{lem:new-puiseux-exponent-new-side}
 Assume that $P$ is $1$-covered, i.e. the initial equation has only integer exponents,
 and let $s(x)$ be a solution as above. If $k=e_\ell$ for some $\ell=1,\ldots,g-1$, then
 the Newton polygon of $P_{k}$ (and, by Lemma \ref{lem:same-newton-polygon}, of $P_{j}$
 for $j>k$) has a side of co-slope $k/n$; that is, $E_{k}$ is indeed a side, not just a
 vertex. If $e_g$ is not dicritical, then the result holds also for $k=e_g$.
\end{lemma}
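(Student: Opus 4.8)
The plan is to prove the lemma by exhibiting, on the supporting line of $\mathcal{N}(P_k)$ of co-slope $k/n$ (where $k=e_\ell$), cloud points at two different heights, i.e.\ $\Top(E_k)>\Bot(E_k)$. By Lemma~\ref{lem:top-less-bottom} we have $\Top(E_k)=\Top(E_{k-1,k/n})$ and by Remark~\ref{rem:top-less-than-bottom} we have $\Bot(E_k)\ge 1$, and by Lemma~\ref{lem:same-newton-polygon} it then suffices to work with $P_{k-1}$ and $P_k$. The structural ingredient is a congruence. Since $e_\ell$ is a characteristic exponent, every exponent $i/n$ of $s(x)$ with $a_i\ne 0$ and $i<e_\ell$ lies in $\frac{1}{r_1\cdots r_{\ell-1}}\mathbb{Z}$; hence $s_{k-1}(x)$ and $\sigma(s_{k-1}(x))$ are Puiseux polynomials over $x^{1/(r_1\cdots r_{\ell-1})}$, and as $P$ is $1$-covered this forces $\mathcal{C}(P_{k-1})\subset\frac{1}{r_1\cdots r_{\ell-1}}\mathbb{Z}\times\mathbb{N}$. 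Consequently, if $(\iota_1,j_1)$ and $(\iota_2,j_2)$ are cloud points of $P_{k-1}$ on one line of co-slope $k/n$, then $\iota_1-\iota_2=(j_2-j_1)e_\ell/n$; multiplying by $r_1\cdots r_{\ell-1}$ and using $r_1\cdots r_{\ell-1}\,e_\ell/n=p_\ell/r_\ell$ with $\gcd(p_\ell,r_\ell)=1$ yields $r_\ell\mid j_2-j_1$. Thus the heights of the cloud points of $P_{k-1}$ along any co-slope-$(k/n)$ line all lie in one residue class modulo $r_\ell\ge 2$.

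In the non-dicritical case I would argue as follows. Here $\Phi_{k-1,k/n}\not\equiv 0$ and, by Lemma~\ref{lem:coefficients-are-zeros}, the nonzero number $a_k=a_{e_\ell}$ is a root of it, so $\Phi_{k-1,k/n}$ is not a monomial; then $\supp_C\Phi_{k-1,k/n}$ has at least two elements, which by the congruence differ by at least $r_\ell$, whence $\Top(E_k)=\Top(E_{k-1,k/n})\ge r_\ell$. Substituting $y=Cx^{k/n}$, $y_1=\sigma(Cx^{k/n})$ into $P_k(x,y,y_1)=P_{k-1}(x,y+a_kx^{k/n},y_1+\sigma(a_kx^{k/n}))$ and using the linearity of $\sigma$ in $C$ gives the transformation formula $\Phi_{k,k/n}(C)=\Phi_{k-1,k/n}(C+a_k)$ (the lowest power of $x$ is unchanged because a nonzero polynomial remains nonzero under $C\mapsto C+a_k$). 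If $\Phi_{k,k/n}$ were a monomial $cC^D$ then $\Phi_{k-1,k/n}(C)=c(C-a_k)^D$ would have nonzero coefficients at $C^0$ and $C^1$, contradicting the residue-class condition with $r_\ell\ge 2$. So $\Phi_{k,k/n}$ is a non-constant polynomial that is not a monomial, and therefore
\[
\Bot(E_k)\le\min\supp_C\Phi_{k,k/n}<\deg_C\Phi_{k,k/n}\le\Top(E_k),
\]
i.e.\ $E_k$ is a side; by Lemma~\ref{lem:same-newton-polygon} the same holds for $P_j$, $j>k$.

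The dicritical case, which under the hypotheses can occur only for $\ell\le g-1$, is the heart of the matter and the step I expect to be the main obstacle. Here $\Phi_{k-1,k/n}\equiv\Phi_{k,k/n}\equiv 0$, so I would argue with the cloud directly, tracking how the $k$-th substitution acts on the weighted-homogeneous part $A^{k-1}_0(x,y)+B^{k-1}_0(x,y)y_1$ of $P_{k-1}$ of minimal $(1,k/n)$-weight: its cloud points form $E_{k-1,k/n}$, and the substitution $y\mapsto y+a_kx^{k/n}$ (with $a_k\ne 0$), which preserves this weight, produces exactly the cloud points forming $E_k$. Since this minimal-weight part is nonzero and $\Phi_{k-1,k/n}\equiv 0$, one checks $B^{k-1}_0\not\equiv 0$; writing $b(C)$ for its weighted ``$C$-polynomial'' (of degree $D=\deg_C b$) and $\mu_0$ for the multiplicity of $C=a_k$ as a root of $b(C)$, a direct computation shows that the substitution replaces $b(C)$ by $b(C+a_k)$ and that all cloud points of $E_k$ have heights between $1+\mu_0$ and $1+D$, with both extremes attained. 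Hence $E_k$ is a side unless $\mu_0=D$, i.e.\ unless $b(C)$ is a constant times $(C-a_k)^{D}$; in that one degenerate configuration the minimal-weight part of $P_{k-1}$ becomes, after the substitution, a monomial times $y^{1+D}$ in both $A$ and $B$, and one argues that the Newton–Puiseux continuation of $s_{k-1}(x)$ can then produce no new characteristic exponent, forcing $n=r_1\cdots r_\ell$ and so $\ell=g$ with $e_g$ dicritical — precisely the pair excluded by hypothesis. This rules out the degeneracy and finishes the proof.
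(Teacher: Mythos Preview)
Your non-dicritical argument is correct and takes a somewhat different route from the paper's: whereas the paper builds a single new cloud point at height $t-1$ by expanding the substitution at the topmost vertex $(\iota,t)$ and checking its coefficient is nonzero, you work entirely with the initial polynomial, using the Taylor-shift identity $\Phi_{k,k/n}(C)=\Phi_{k-1,k/n}(C+a_k)$ together with your congruence constraint on $\supp_C\Phi_{k-1,k/n}$ to rule out $\Phi_{k,k/n}$ being a monomial. (A minor omission: you should note that $D\geq 1$, since $D=0$ would make $\Phi_{k-1,k/n}$ a nonzero constant with no root $a_k$.) This is a clean alternative.

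There is, however, a genuine gap in the dicritical case. Your degenerate configuration is $b(C)=c(C-a_k)^{D}$, and you assert that from the resulting single point at height $1+D$ ``the Newton--Puiseux continuation can produce no new characteristic exponent, forcing $\ell=g$''. This does not follow for $D\geq 1$: a single cloud point at height $1+D\geq 2$ in no way prevents later elements from developing sides of new co-slopes below it. What you are missing is to apply your own congruence constraint to $b$ itself: since $a_k\neq 0$, the polynomial $c(C-a_k)^{D}$ has full support $\{0,1,\dots,D\}$, and this lies in a single residue class modulo $r_\ell\geq 2$ only if $D=0$, i.e.\ $\Top(E_k)=1$. Only then does the claim have a chance, and even then it is not automatic: with $\Top(E_{k'})=1$ for all $k'\geq k$, one must still prove that every later index $k'$ with $a_{k'}\neq 0$ satisfies $k'/n\in\frac{1}{r_1\cdots r_\ell}\mathbb{Z}$. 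The paper carries this out by induction on $k'$, showing that a first violating index $k_0'$ would give a linear initial polynomial $\Phi_{k_0'-1,k_0'/n}$ with nonzero linear coefficient (because $\delta_{k_0'/n}\neq\delta_{k/n}$, using $|q|\neq 1$ in the $q$-difference case) and vanishing constant term (because the would-be height-$0$ point has abscissa outside $\frac{1}{r_1\cdots r_\ell}\mathbb{Z}$), hence unique root $0$, contradicting $a_{k_0'}\neq 0$. Your phrase ``one argues'' elides precisely this step.
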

\begin{proof}
 Assume $k=e_{\ell}$ with $\ell \leq g$. Let $\Phi(C)=\Phi_{ k-1
 ,k/n}(C)$ be the corresponding initial polynomial. By recurrence,
 $\mathcal{C} (P_{k-1})$ is included in $\frac{1}{r_1\cdots
 r_{\ell-1}}\mathbb{Z}\times \mathbb{Z}$, so that all the points
 have abscissa with denominator at most $r_1\cdots r_{\ell-1}$. Let
 $(\iota,t)$ be the topmost vertex of $E_{ k-1 ,k/n}$, corresponding
 to the terms $A_{\iota t}^{k-1}x^{\iota}y^{t}+B_{\iota t}^{k-1}x^{
 \iota + o_{\sigma} }y^{t-1}y_1$. In particular, $t=\Top(E_{
 k-1 ,k/n})$. Performing the substitution $y=y+a_kx^{k/n}$ at the
 terms of $P_{k-1}$ corresponding to the point $(\iota,t)$,
 we get
 \begin{equation*}
 \begin{split}
 &A_{\iota t}^{k-1} x^{\iota }(y+a_kx^{k/n})^{t} +
 B_{\iota t}^{k-1} x^{\iota + o_{\sigma} }
 (y+a_kx^{k/n})^{t-1}(y_1+\sigma(a_{k}x^{k/n}))\\
 &=A_{\iota t}^{k-1} x^{\iota} y^t + B_{\iota t}^{k-1} x^{\iota + o_{\sigma} } y^{t-1}y_1 + 
 (tA_{\iota t}^{k-1} + \delta_{{k/n}} B_{\iota t}^{k-1}) a_{ k } x^{\iota +k/n}y^{t-1}\\
 &+ (t-1)B_{\iota t}^{k-1} a_kx^{\iota +k/n + o_{\sigma} } y^{t-2}y_1 + \cdots,
 \end{split}
\end{equation*}
where the dots indicate terms whose ordinates in $\mathcal{N}(P_{k})$ are strictly less than
$t-1$. 
Recall that $\delta_{k/n}$ is either $k/n$ or $q^{k/n}$, depending on $P$ being a
differential or $q$-difference equation. 
 Note that $(\iota+k/n,t-1)$ does not belong to $\mathcal{C}(P_{k-1})$ since 
$\iota+k/n\not \in \frac{1}{r_1\cdots r_{\ell-1}}\mathbb{Z}$.
Moreover, $(\iota, t)$ is the only point in 
$\mathcal{C}(P_{k-1})$ contributing to $(\iota +k/n, t-1)$ when performing the substitution.
Thus, it suffices to show that either
$(t-1)B_{\iota t}^{k-1} a_k \neq 0$ or $tA_{\iota t}^{k-1}+ \delta_{k/n} B_{\iota t}^{k-1} \neq 0$
to obtain $(\iota+k/n,t-1) \in \mathcal{C}(P_k)$.

Notice that $a_{k} \neq 0$ because $e_{\ell}$ is a characteristic exponent, so that it appears explicitly in $s(x)$. If $t>1$, then either
$B_{\iota t}^{k-1} \neq 0$, so that $(t-1)B_{\iota t}^{k-1} a_k$ is not zero, or
$(tA_{\iota t}^{k-1}+ \delta_{k/n} B_{\iota t}^{k-1})=tA_{\iota t}^{k-1}$ is not zero. {} Thus, we can consider
$t=1$ and $A_{\iota 1}^{k-1}+ \delta_{k/n} B_{\iota 1}^{k-1}=0$ from now on. In particular we obtain
$A_{\iota 1}^{k-1} \neq 0 \neq B_{\iota 1}^{k-1}$. {}

We claim that $t=1$ and $A_{\iota 1}^{k-1}+ \delta_{k/n} B_{\iota 1}^{k-1}=0$ imply that
$e_\ell$ is dicritical and $\ell=g$, finishing the proof. As $t=1$, the
condition $A_{\iota 1}^{k-1}+ \delta_{k/n} B_{\iota 1}^{k-1}=0$ implies that 
$\Phi(C) \equiv (A_{\iota 1}^{k-1}+ \delta_{k/n}B_{\iota 1}^{k-1})C + \Phi(0) \equiv \Phi (0)$. 
Since $\Phi(a_k)=0$, we deduce that $\Phi(C)\equiv 0$.
Thus, $k=e_{\ell}$ is dicritical. Also, since for $k^{\prime}\geq k$ we have
\[ 1 = \Top (E_{ k-1 ,k/n}) = \Top (E_k) \geq \Top (E_{k^{\prime}}) > 0 \]
by Lemma \ref{lem:top-less-bottom} and Remark \ref{rem:top-less-than-bottom}, we deduce 
$\Top(E_{k^{\prime}}) = 1$ for $k^{\prime}\geq k$. We claim that
$k'/n \in \frac{1}{r_1\cdots r_{\ell}}\mathbb{Z}$ for any $k' \geq k$ such that $a_{k'} \neq 0$.
This property implies $\ell = g$ by the definition of characteristic exponents.

Let us show the claim.
By construction, the property is satisfied by $k$.
Assume it holds for any $k \leq k' < k_{0}'$ and suppose, aiming at contradiction, that 
$k_{0}'/n \not \in \frac{1}{r_1\cdots r_{\ell}}\mathbb{Z}$ and $a_{k_{0}' } \neq 0$. 
Since $\delta_{k/n} \neq \delta_{k_{0}' /n}$, we get 
\[ A_{\iota 1}^{k_{0}'-1}+ \delta_{k_{0}' /n}B_{\iota 1}^{k_{0}'-1}
 =A_{\iota 1}^{k-1}+ \delta_{k_{0}' /n}B_{\iota 1}^{k-1}
 \neq A_{\iota 1}^{k-1}+ \delta_{k /n}B_{\iota 1}^{k-1}=0 \]
by Lemma \ref{lem:same-newton-polygon}.
Note that 
$\mathcal{C} (P_{k_{0}' -1}) \subset \frac{1}{r_1\cdots r_{\ell}}\mathbb{Z}\times \mathbb{Z}$ as a consequence of the induction hypothesis. 
This property, together with $\iota \in \frac{1}{r_1\cdots r_{\ell}}\mathbb{Z}$ and $k_{0}'/n \not \in \frac{1}{r_1\cdots r_{\ell}}\mathbb{Z}$ imply
$A_{\iota +k_{0}' /n \, 0}^{k_{0}' -1} =0$. Since 
\[ \Phi_{k_{0}' -1, k_{0}' /n}(C) =
 (A_{\iota 1}^{k_{0}'-1}+ \delta_{k_{0}'/n}B_{\iota 1}^{k_{0}'-1}) C
 +
 A_{\iota +k_{0}' /n \, 0}^{k_{0}'-1},\] 
$0$ is the unique root of $\Phi_{k_{0}' -1, k_{0}' /n}$, contradicting 
$\Phi_{k_{0}' -1, k_{0}' /n} (a_{k_{0}'}) =0$. 
\end{proof}

 The proof of the previous result implies that the Newton polygon after a substitution corresponding to a characteristic exponent has points in a grid
with a different scale in the variable $x$. Namely: 
\begin{lemma}\label{lem:new-puiseux-exponent-new-grid}
 Assume that $P$ is $1$-covered. 
 Let $k\in \mathbb{Z}_{>0}$, and let $r$ be the minimum integer such that $\mathcal{C}(P_{k-1})\subset \frac{1}{r}\mathbb{Z}\times \mathbb{Z}$. If $\Top(E_{k})>1$, then $k$ is a characteristic exponent if and only if the cloud of points of $P_k$ is not included in $\frac{1}{r}\mathbb{Z}\times \mathbb{Z}$.
\end{lemma}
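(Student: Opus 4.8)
The plan is to prove the two implications of the biconditional separately, after a reduction and one arithmetic identity. First, by Lemma~\ref{lem:top-less-bottom} the map $k'\mapsto\Top(E_{k'})$ is non-increasing, so the hypothesis $\Top(E_k)>1$ forces $\Top(E_{k'})>1$ for every $1\le k'\le k$, and in particular $\Top(E_{e_j})>1$ for every characteristic exponent $e_j<k$. Write $\ell$ for the number of characteristic exponents of $s(x)$ strictly smaller than $k$, so that $e_\ell<k\le e_{\ell+1}$ with the conventions $e_0=0$ and $e_{g+1}=+\infty$. The key point is the identity $r=r_1\cdots r_\ell$.

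Granting it, the two implications are short. Suppose first that $k=e_{\ell+1}$ is a characteristic exponent, and let $(\iota,t)$ be the topmost vertex of $E_{k-1,k/n}$, so that $t=\Top(E_{k-1,k/n})=\Top(E_k)>1$ by Lemma~\ref{lem:top-less-bottom}. The computation in the proof of Lemma~\ref{lem:new-puiseux-exponent-new-side} (its case $t>1$) shows that $(\iota+k/n,t-1)\in\mathcal{C}(P_k)$. Since $(\iota,t)$ is a cloud point of $P_{k-1}$ and $\mathcal{C}(P_{k-1})\subset\frac1r\mathbb{Z}\times\mathbb{Z}=\frac{1}{r_1\cdots r_\ell}\mathbb{Z}\times\mathbb{Z}$, we have $\iota\in\frac1r\mathbb{Z}$, whereas $k/n\notin\frac{1}{r_1\cdots r_\ell}\mathbb{Z}=\frac1r\mathbb{Z}$ by the definition of $e_{\ell+1}$; hence $\iota+k/n\notin\frac1r\mathbb{Z}$ and $\mathcal{C}(P_k)\not\subset\frac1r\mathbb{Z}\times\mathbb{Z}$. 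Conversely, assume $k$ is not a characteristic exponent. If $a_k=0$ then $P_k=P_{k-1}$ and there is nothing to prove; if $a_k\ne0$ then necessarily $k<e_{\ell+1}$, and the definition of $e_{\ell+1}$ gives $k/n\in\frac{1}{r_1\cdots r_\ell}\mathbb{Z}=\frac1r\mathbb{Z}$, so that the substitution corresponding to $a_kx^{k/n}$, which changes every cloud abscissa only by adding a non-negative integer multiple of $k/n\in\frac1r\mathbb{Z}$, keeps $\mathcal{C}(P_k)\subset\frac1r\mathbb{Z}\times\mathbb{Z}$.

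It remains to prove $r=r_1\cdots r_\ell$. The divisibility $r\mid r_1\cdots r_\ell$ is immediate by induction on $k$: $\mathcal{C}(P_0)\subset\mathbb{Z}\times\mathbb{Z}$ because $P$ is $1$-covered, and each substitution step can only adjoin to the occurring $x$-denominators divisors of the denominator of the exponent being substituted, which for every index $<k$ (whether a characteristic exponent or not) divides $r_1\cdots r_\ell$. For the reverse divisibility, fix a prime $p$ dividing $r_1\cdots r_\ell$, let $j_0$ be the largest $j\le\ell$ with $p\mid r_j$, and note $v_p(r_1\cdots r_\ell)=v_p(r_1\cdots r_{j_0})$. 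When the substitution corresponding to $a_{e_{j_0}}x^{e_{j_0}/n}$ is performed, the analysis in the proof of Lemma~\ref{lem:new-puiseux-exponent-new-side} (valid since $\Top(E_{e_{j_0}})>1$) produces a cloud point of $P_{e_{j_0}}$ lying on the side $E_{e_{j_0}}$, of ordinate $t_{j_0}-1\ge1$ and abscissa $\iota_{j_0}+e_{j_0}/n$, where $(\iota_{j_0},t_{j_0})$ is the topmost vertex of $E_{e_{j_0}-1,e_{j_0}/n}$ and hence $\iota_{j_0}\in\frac{1}{r_1\cdots r_{j_0-1}}\mathbb{Z}$. From $r_1\cdots r_{j_0-1}\,e_{j_0}/n=p_{j_0}/r_{j_0}$ with $\gcd(p_{j_0},r_{j_0})=1$, from $n=r_1\cdots r_g$, and from the maximality of $j_0$, a short $p$-adic computation gives $v_p(e_{j_0}/n)=-v_p(r_1\cdots r_\ell)<v_p(\iota_{j_0})$, hence $v_p(\iota_{j_0}+e_{j_0}/n)=-v_p(r_1\cdots r_\ell)$: the denominator of this abscissa is divisible by $p^{v_p(r_1\cdots r_\ell)}$. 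This cloud point persists in $\mathcal{C}(P_{k-1})$ by Lemma~\ref{lem:same-newton-polygon} and \eqref{eq:E-k-stays}. Letting $p$ run over the prime divisors of $r_1\cdots r_\ell$ gives $r_1\cdots r_\ell\mid r$, whence $r=r_1\cdots r_\ell$.

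The step I expect to be the main obstacle is precisely this last persistence claim: the points created at the characteristic exponents $e_{j_0}\le e_\ell$ must survive all the intermediate substitutions leading to $\mathcal{C}(P_{k-1})$. Lemma~\ref{lem:same-newton-polygon} freezes the Newton polygon only above the top of the current element, while the relevant points lie \emph{on} a side; so one has to combine the stability of the side $E_{e_{j_0}}$ furnished by \eqref{eq:E-k-stays} with the observation that a later substitution can feed a point of that side only from points lying strictly above it on the polygon, which are themselves already frozen. Since the created point is the unique point of $\mathcal{C}(P_{e_{j_0}})$ lying on $E_{e_{j_0}}$ at ordinate $t_{j_0}-1$, its coefficient then cannot change. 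Once this is settled, the $p$-adic bookkeeping and the two implications are routine.
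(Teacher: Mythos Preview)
Your proof is correct and in fact more complete than the paper's own, which treats the lemma as a one-line remark after Lemma~\ref{lem:new-puiseux-exponent-new-side}: the paper only spells out the forward implication (the $t>1$ case of the previous proof yields $(\iota+k/n,t-1)\in\mathcal{C}(P_k)$ with $\iota+k/n\notin\frac{1}{r}\mathbb{Z}$), tacitly using only the easy divisibility $r\mid r_1\cdots r_\ell$, and is silent on both the converse and the identity $r=r_1\cdots r_\ell$. You correctly observe that the converse genuinely needs $r_1\cdots r_\ell\mid r$, and your prime-by-prime argument via the point created at $e_{j_0}$ is the right way to get it.

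The one place where your reasoning is slightly off is the persistence justification. Saying that the feeding points are ``frozen'' is not the relevant property: a frozen coefficient can still contribute to lower points under a substitution. The clean argument is geometric. By \eqref{eq:E-k-stays} the supporting line $L_{e_{j_0}/n}(P_{k'-1})$ coincides with $L_{e_{j_0}/n}(P_{e_{j_0}})$ for every $k'>e_{j_0}$, so every cloud point of $P_{k'-1}$ lies on or to the right of it. Now any $(\alpha,\beta)\in\mathcal{C}(P_{k'-1})$ with $\beta\ge t_{j_0}$ that would feed $(\iota_{j_0}+e_{j_0}/n,\,t_{j_0}-1)$ under the substitution of exponent $k'/n>e_{j_0}/n$ must satisfy
\[
\alpha=\bigl(\iota_{j_0}+e_{j_0}/n\bigr)-(\beta-t_{j_0}+1)\,\tfrac{k'}{n}
<\bigl(\iota_{j_0}+e_{j_0}/n\bigr)-(\beta-t_{j_0}+1)\,\tfrac{e_{j_0}}{n},
\]
which places $(\alpha,\beta)$ strictly to the left of $L_{e_{j_0}/n}$, a contradiction. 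Hence no cloud point can feed the created point, and its coefficients persist in $\mathcal{C}(P_{k-1})$. With this correction, your $p$-adic bookkeeping and both implications go through exactly as you wrote them.
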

The fact that $\Top(E_k)>1$ corresponds to the case $t>1$ in the previous proof,
which guarantees that the point $(\iota + k/n, t-1)$ belongs to $\mathcal{C}(P_k)$, and by
definition, $\iota +k/n\not\in \frac{1}{r}\mathbb{Z}$. This hypothesis is
necessary: the solutions $y=Cx^{m/n}$ of the differential equation
$P\equiv my - nxy^{\prime}=0$,
which has the single point $(0,1)$ in its cloud, leave the
cloud of points invariant after the substitution $y=y+Cx^{m/n}$.


\subsection{Decomposing the initial polynomial}
%
 By Lemma \ref{lem:coefficients-are-zeros}, the coefficient $a_k$ is always a root
of the corresponding initial polynomial $\Phi_{k-1,k/n}(C)$, this is the basis of
Newton's technique. This means that $\Bot(E_k)\geq 1$ for all $k$, see Remark \ref{rem:top-less-than-bottom}. However, we can obtain
much more information about the element $E_{k}$ and the equation $P_k$ if we study the
transformation of $E_{k-1,k/n}$ into $E_{k}$ as a parametric family depending on a complex parameter $C$, that
is, studying the substitution $y=y+Cx^{k/n}$ instead of $y=y+a_kx^{k/n}$. This is, in the
differential case, similar to studying the whole exceptional divisor corresponding to
$x^{k/n}$ and the singularities and regular points of the strict transform of the
foliation given by $P$ at the corresponding exceptional divisor. This study is carried out
by means of the \emph{$k$-th initial form}, which gathers all that information. It also
provides an additive decomposition of $\Phi_{k-1,k/n}(C)$ into two terms: one
corresponding to the algebraic part of $P_k$, and the other to the one with $y_1$. This
decomposition, and its consequences are key in further results.We follow
\cite{CanoJ2} in the definition, leaving $s(x)$ implicit.

Fix $k \geq 1$ from now in this section, and let
\[ P_{k}(C):=P_{k-1}(x, y+Cx^{k/n},y_1+\sigma(C x^{k/n})) \] 
be the
$k$-th substitution with a complex parameter $C$ instead of $a_{k}$. Writing
\[ P_k(C)=A^{k}(C)(x,y)+B^{k}(C)(x,y)y_1 = \sum 
 A^{k}_{\iota j}(C)x^{\iota} y^j + B^{k}_{\iota j}(C)x^{\iota+o_{\sigma}}y^{j-1}y_1, \]
the property $P_k(a_k)=P_k$ implies
$A_{\iota j}^k(a_k)=A_{\iota j}^k$ and $B_{\iota j}^k(a_k)=B_{\iota j}^k$ for any $(\iota,j)$, where $A_{\iota j}^k$ and $B_{\iota j}^k$ were defined in 
Definition \ref{def:substitution-of-equation}. Notice also that $P_k(0)=P_{k-1}$, $A^k_{\iota j}(0)=A^{k-1}_{\iota j}$ and $B^k_{\iota j}(0)=B^{k-1}_{\iota j}$. 
 \begin{definition}\label{def:initial-form}
 The \emph{$k$-th initial form} of $P$ for $s(x)$ is the polynomial in $C$ given by the expression:
\begin{equation*}
 \ini_{k}(C) = \sum_{(\iota,j)\in L_{k/n}(P_{k-1})}
 A^{k}_{\iota j}(C)x^{\iota} y^j + B^{k}_{\iota j}(C)x^{\iota+o_{\sigma}}y^{j-1}y_1 .
\end{equation*}
\end{definition}
From bottom to top, $\ini_{k}(C)$ can be rewritten as
\begin{equation}
 \label{eq:initial-form}
 \ini_{k}(C) = A^k_{\nu \, 0} (C) x^{\nu} + \sum_{j=1}^t 
 A^{k}_{\nu-jk/n \, j}(C)x^{\nu-jk/n}y^{j} +
 B^{k}_{\nu-jk/n \, j}(C) x^{\nu-jk/n+o_{\sigma}}y^{j-1}y_1.
\end{equation}
where $t=\Top(E_{k-1,k/n})=\Top (E_k)$ and $\nu=\overline{\iota}+t k/n$ if $(\overline{\iota},t)$ is
the topmost vertex of $E_{k-1,k/n}$. For the sake of simplicity, as $k$ is fixed
throughout all this section, we set, for the remainder of this section,
$\iota_{j}:=\nu-jk/n$, and write:
\begin{equation*}
 \ini_{k}(C) = A^k_{\iota_{0} \, 0} (C) x^{\nu} + \sum_{j=1}^t 
 A^{k}_{\iota_{j} \, j}(C)x^{\nu-jk/n}y^{j} +
 B^{k}_{\iota_{j} \, j}(C) x^{\nu-jk/n+o_{\sigma}}y^{j-1}y_1,
\end{equation*}
because we are mostly interested in $A_{\iota_{j} j} (C)$ and $B_{\iota_{j} j} (C)$, and their pairwise relations, for $j=1,\ldots, t$.

The following two polynomials decompose $\Phi_{k-1,k/n}(C)$ into two parts: one corresponding to the terms without $y_1$ in $\ini_k(C)$, and the other to those with $y_1$. They will be key in many later computations:
\begin{equation}\label{eq:alpha-beta}
 \begin{split}
 &\alpha_{k}(C)=\sum_{j=0}^tA^{k}_{\iota_{j} \, j}(0)C^j =
 \sum_{j=0}^tA^{k-1}_{\iota_{j} \, j}C^j\\
 &\beta_{k}(C) =\sum_{j=1}^tB^{k}_{\iota_{j} \, j}(0)C^{j-1} =
 \sum_{j=1}^tB^{k-1}_{\iota_{j} \, j}C^{j-1}.
\end{split}
\end{equation}
By definition, the initial polynomial $\Phi_{ k-1 ,k/n}(C)$ satisfies
\begin{equation}
 \label{eq:phi-alpha-beta}
 \Phi_{ k-1 ,k/n}(C)= \alpha_{k}(C) + \delta_{k/n}C \beta_{k}(C).
\end{equation}
The following result is the basis of the relevance of this decomposition \cite[cf. Eq. (1)]{CanoJ2}. Recall that $t=\Top(E_{k-1,k/n})=\Top(E_k)$ in this subsection:
\begin{lemma}\label{lem:alpha-beta-descent}
 With the notation above, let $f^{(r)}(C)$ denote $\displaystyle\frac{\partial^{r} f}{\partial C^r}(C)$ for any function $f(C)$. Then:
 \begin{align}
 \label{eq:alpha-beta-descent-1}
 &A^{k}_{\iota_{0} \, 0}(C) = \Phi_{ k-1 ,k/n}(C), \\
 \label{eq:alpha-beta-descent-2}
 &A^{k}_{\iota_{j} \, j}(C) = \displaystyle\frac{1}{j!}\Phi^{(j)}_{ k-1 ,k/n}(C) -
 \delta_{k/n} \frac{1}{(j-1)!}\beta_{k}^{(j-1)}(C),
 \quad j=1,\ldots, t, \\
 \label{eq:alpha-beta-descent-3}
 &B^{k}_{\iota_{j} \, j}(C) = \displaystyle\frac{1}{(j-1)!}\beta_{k}^{(j-1)}(C),
 \quad j=1,\ldots, t , \\
 \label{equ:AdB}
 & A^{k}_{\iota_{j} \, j}(C) + \delta_{k/n} B^k_{\iota_j \, j}(C) \equiv \frac{1}{j!}\Phi^{(j)}_{ k-1 ,k/n}(C) , 
 \quad j=1,\ldots, t .
\end{align}
Specifically, $A^{k}_{\iota_{t} t}(C)=A^{k-1}_{\iota_t t}$ and
$B^{k}_{\iota_{t} t}(C)=B^{k-1}_{\iota_t t}$ are independent of $C$ (this is already known by Lemma \ref{lem:same-newton-polygon}), 
and, finally $A^{k}_{\iota_{0} 0}(C)=0$ if and only if $\Phi_{k-1,k/n}(C)=0$. 
\end{lemma}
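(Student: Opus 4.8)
\emph{Proof proposal.} The plan is to compute $\ini_k(C)$ explicitly by expanding the substitution $y\mapsto y+Cx^{k/n}$, $y_1\mapsto y_1+\sigma(Cx^{k/n})$ and then translating the result into the $\Phi$–$\alpha$–$\beta$ notation. The starting observation is that this substitution respects the linear form $(\iota,j)\mapsto \iota+j\,k/n$ on monomials: a monomial $x^{a}y^{b}$ becomes $\sum_{l}\binom{b}{l}C^{l}x^{a+lk/n}y^{b-l}$, and since $\sigma(Cx^{k/n})=\delta_{k/n}\,C\,x^{k/n-o_{\sigma}}$ (which one checks from Equation \eqref{eq:sigma} in both cases) the $y_1$–monomials behave the same way. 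Because $L_{k/n}(P_{k-1})$ is a supporting line of $\mathcal{N}(P_{k-1})$, every monomial of $P_{k-1}$ has $\iota+j\,k/n\geq\nu$, with equality exactly for the monomials lying on $E_{k-1,k/n}$. Hence no monomial strictly above the line can be carried onto it by the substitution, and $\ini_{k}(C)$ is obtained by substituting only into the restriction $\overline{P}_{k-1}$ of $P_{k-1}$ to that line, namely
\[
\overline{P}_{k-1}=\sum_{j=0}^{t}A^{k-1}_{\iota_{j}\,j}x^{\iota_{j}}y^{j}
+\sum_{j=1}^{t}B^{k-1}_{\iota_{j}\,j}x^{\iota_{j}+o_{\sigma}}y^{j-1}y_{1},
\]
where $\iota_{j}=\nu-jk/n$ as in \eqref{eq:initial-form} and coefficients with $j<\Bot(E_{k-1,k/n})$ vanish. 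This reduces everything to a finite binomial computation.

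Next I would do that bookkeeping. Expanding $\sum_{j}A^{k-1}_{\iota_{j}\,j}x^{\iota_{j}}(y+Cx^{k/n})^{j}$ and collecting the coefficient of $y^{l}$ gives $x^{\iota_{l}}\sum_{j\geq l}\binom{j}{l}A^{k-1}_{\iota_{j}\,j}C^{j-l}=x^{\iota_{l}}\tfrac{1}{l!}\alpha_{k}^{(l)}(C)$ by the definition of $\alpha_{k}$ in \eqref{eq:alpha-beta} (using $\iota_{j}+(j-l)k/n=\iota_{l}$). In the same way $\overline{B}_{k-1}(x,y+Cx^{k/n})\,y_{1}$ contributes, as coefficient of $x^{\iota_{j}+o_{\sigma}}y^{j-1}y_{1}$, precisely $\tfrac{1}{(j-1)!}\beta_{k}^{(j-1)}(C)$, which is exactly \eqref{eq:alpha-beta-descent-3}. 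Finally the cross term $\sigma(Cx^{k/n})\cdot\overline{B}_{k-1}(x,y+Cx^{k/n})=\delta_{k/n}C\,x^{k/n-o_{\sigma}}\,\overline{B}_{k-1}(x,y+Cx^{k/n})$ carries no $y_{1}$ and, using $\iota_{m}+k/n=\iota_{m-1}$, contributes $\delta_{k/n}C\,\tfrac{1}{l!}\beta_{k}^{(l)}(C)$ to the coefficient of $x^{\iota_{l}}y^{l}$. Adding the two algebraic contributions yields $A^{k}_{\iota_{l}\,l}(C)=\tfrac{1}{l!}\alpha_{k}^{(l)}(C)+\delta_{k/n}C\,\tfrac{1}{l!}\beta_{k}^{(l)}(C)$ for $l=0,\dots,t$.

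To finish, I would match this with the $\Phi$–notation. From \eqref{eq:phi-alpha-beta}, $\Phi_{k-1,k/n}=\alpha_{k}+\delta_{k/n}C\beta_{k}$, and the Leibniz rule gives $(C\beta_{k})^{(j)}=C\beta_{k}^{(j)}+j\beta_{k}^{(j-1)}$, whence $\tfrac{1}{j!}\Phi^{(j)}_{k-1,k/n}(C)=\tfrac{1}{j!}\alpha_{k}^{(j)}(C)+\delta_{k/n}\bigl(\tfrac{C}{j!}\beta_{k}^{(j)}(C)+\tfrac{1}{(j-1)!}\beta_{k}^{(j-1)}(C)\bigr)$. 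Subtracting $\delta_{k/n}\tfrac{1}{(j-1)!}\beta_{k}^{(j-1)}(C)$ recovers the expression for $A^{k}_{\iota_{j}\,j}(C)$ obtained above, which is \eqref{eq:alpha-beta-descent-2}; the case $j=0$ (where no $\beta^{(-1)}$ term occurs) is \eqref{eq:alpha-beta-descent-1}; and adding $\delta_{k/n}B^{k}_{\iota_{j}\,j}(C)=\delta_{k/n}\tfrac{1}{(j-1)!}\beta_{k}^{(j-1)}(C)$ back gives \eqref{equ:AdB}. Since $\deg_{C}\alpha_{k}\leq t$ and $\deg_{C}\beta_{k}\leq t-1$, for $j=t$ all these expressions are constant and equal to $A^{k-1}_{\iota_{t}\,t}$, respectively $B^{k-1}_{\iota_{t}\,t}$, and the last assertion, $A^{k}_{\iota_{0}\,0}(C)\equiv 0$ if and only if $\Phi_{k-1,k/n}(C)\equiv 0$, is immediate from \eqref{eq:alpha-beta-descent-1}. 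The only genuinely delicate points, which I would write out carefully, are the supporting-line argument that justifies replacing $P_{k-1}$ by $\overline{P}_{k-1}$, and keeping the index shifts $\iota_{j}=\nu-jk/n$ and $\iota_{m}+k/n=\iota_{m-1}$ straight so that the cross term lands on exactly the monomials $x^{\iota_{l}}y^{l}$; beyond this there is no conceptual obstacle.
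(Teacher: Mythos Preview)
Your proposal is correct. The paper takes a slightly different, though equivalent, route: instead of expanding the substitution directly, it first applies the normalizing change $y\mapsto x^{k/n}y$, $y_{1}\mapsto\delta_{k/n}x^{k/n-o_{\sigma}}y_{1}$, which collapses the whole initial form to $x^{\nu}\bigl(\alpha_{k}(y)+\delta_{k/n}\beta_{k}(y)\,y_{1}\bigr)$; then it translates $y\mapsto C+y$, $y_{1}\mapsto C+y_{1}$ and reads off the Taylor coefficients of $\alpha_{k}$ and $\beta_{k}$ at $C$; finally it undoes the first change. Your direct binomial expansion with the index identities $\iota_{j}+(j-l)k/n=\iota_{l}$ accomplishes exactly the same bookkeeping without the auxiliary changes of variables. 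One small advantage of your write-up is that you make explicit the supporting-line argument (the substitution preserves the linear form $\iota+jk/n$, so monomials off $L_{k/n}(P_{k-1})$ cannot contribute), which the paper uses implicitly when it replaces $P_{k-1}$ by $\ini_{k}(0)$; conversely, the paper's trick makes the Taylor structure visible at a glance once the side has been ``straightened'' to a vertical segment.
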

The following proof is rather technical but its gist is to transform $P_{k-1}$ and
$P_k(C)$ into new equations whose elements $E_{k-1, k/n}$ correspond to a vertical side of
the same height. When written like this, the argument becomes a direct application of
Taylor's formula.
\begin{proof} Define the polynomial
 \begin{equation*}
 I(y,y_1)=\ini_{k}(0) =
 A^{k}_{\iota_{0} \, 0} (0) x^{\nu} + \sum_{j=1}^t
 A^{k}_{\iota_{j} \, j} (0) x^{\nu-j{k/n}}y^j +
 B^{k}_{\iota_{j} \, j}(0) x^{\nu-j{k/n} + o_{\sigma}}y^{j-1}y_1 .
 \end{equation*}
Rather than compute $ \ini_{k}(C) $ directly, we
will do so in three steps. Each step is an algebraic change of
indeterminates that does not correspond to a differential or
$q$-differential change of indeterminate, but the composition of the
three does. First, we substitute $y$ by $x^{{k/n}}y$ and $y_1$ by $\delta_{k/n} x^{{k/n} - o_{\sigma}}y_1$ and notice how $\alpha_k(y)$ and $\beta_k(y)$ can be used to rewrite the result:
\begin{multline*}
 I_1(y,y_1):=I(x^{k/n}\,y, \delta_{k/n}\, x^{k/n - o_{\sigma}}\, y_1)=\\
 A^{k}_{\iota_{0} \, 0} (0) x^{\nu} +
 \sum_{j=1}^t A^{k}_{\iota_{j} \, j} (0) x^{\nu-j\,{k/n}}x^{j\,{k/n}}y^j +
 B^{k}_{\iota_{j} \, j} (0)
 x^{\nu-j{k/n}}x^{(j-1){k/n}}y^{j-1}\delta_{k/n}\,x^{{k/n}}\, y_1 \\
 =x^{\nu}\cdot\left\{ \alpha_{k}(y)+\delta_{k/n}\,\beta_{k}(y)\,y_1\right\}.
\end{multline*} 
Then we translate by $C$ both $y$ and $y_1$ and expand $\alpha(C+y)$ and $\beta(C+y)$ using Taylor's formula:
\begin{multline*}
 I_2(y,y_1):=I_1(C+y,C+y_1)=
 x^{\nu}\bigg(
 \alpha_{k}(C+y)+\delta_{k/n}\,\beta_{k}(C+y)\,(C+y_1)\bigg) =\\
 x^{\nu}\bigg(
 \sum_{j=0}^{t} \frac{1}{j!}\alpha_{k}^{(j)}(C)\,y^{j}+
 \delta_{k/n}(C+y_1)\sum_{j=0}^{t-1}\frac{1}{j!}\beta_{k}^{(j)}(C)y^{j}
 \bigg)=\\
 x^{\nu}\bigg(
 \sum_{j=0}^{t}
 \frac{1}{j!}\left(\alpha_{k}^{(j)}(C)+\delta_{k/n} \,C\,
 \beta_{k}^{(j)}(C)\right) \,y^{j}+ \delta_{k/n}
 \sum_{j=1}^{t}\frac{1}{(j-1)!}\beta_{k}^{(j-1)}(C)y^{j-1}y_1\bigg) .
\end{multline*}
Finally, we undo the first transformation:
\begin{multline*}
 \ini_{k}(C) =I_2(x^{-{k/n}}\,y,\delta_{k/n}^{-1}\, x^{-{k/n} + o_{\sigma}}\,y_1)
 =\\
 x^{\nu}\bigg(
 \sum_{j=0}^{t}
 \frac{1}{j!}\left(\alpha_{k}^{(j)}(C)+\delta_{k/n} \,C\,
 \beta_{k}^{(j)}(C)\right) \,x^{-j\,{k/n}}y^{j}\bigg. +\\
 \bigg. \delta_{k/n}
 \sum_{j=1}^{t}\frac{1}{(j-1)!}\beta_{k}^{(j-1)}(C) x^{-(j-1){k/n}}
 y^{j-1} \delta_{k/n}^{-1} x^{-{k/n} + o_{\sigma}} y_1\bigg)=\\
 \sum_{j=0}^{t}
 \frac{1}{j!}\left(\alpha_{k}^{(j)}(C)+\delta_{k/n} \,C\,
 \beta_{k}^{(j)}(C)\right) \,x^{\nu-j\,{k/n}}y^{j} 
 +
 \sum_{j=1}^{t}\frac{1}{(j-1)!}\beta_{k}^{(j-1)}(C) x^{\nu-j{k/n} + o_{\sigma} }
 y^{j-1} y_1.
\end{multline*}
From the last equalities we can already infer:
\begin{align*} A^{k}_{\iota_{0} \, 0}(C)&=\alpha_{k}(C)+\delta_{k/n}\,C\,\beta_{k}(C)=\Phi_{k-1,k/n}(C),\\
 A^{k}_{\iota_{j} \, j}(C)&=\frac{1}{j!}\left(\alpha_{k}^{(j)}(C)+\delta_{k/n} \,C\,
 \beta_{k}^{(j)}(C)\right),\quad j=1,\ldots,t , \\
 B^{k}_{\iota_{j} \, j}(C)&=\frac{1}{(j-1)!}\beta_{k}^{(j-1)}(C),\quad j=1,\ldots,t.
\end{align*}
By induction on $j$, we obtain 
\begin{displaymath}
 \Phi_{k-1,k/n}^{(j)}(C)=\alpha_{k}^{(j)}(C)+\delta_{k/n} \,C\, \beta_{k}^{(j)}(C)+j\,\delta_{k/n}\,\beta_{k}^{(j-1)}(C),
\end{displaymath}
whence
\begin{displaymath}
A^k_{\iota_{j} \, j}(C)=\frac{1}{j!}\Phi_{k-1,k/n}^{(j)}(C)-\frac{\delta_{k/n}}{(j-1)!}\beta_{k}^{(j-1)}(C),
\end{displaymath}
as desired. Equation \eqref{equ:AdB} is an immediate consequence of Equations \eqref{eq:alpha-beta-descent-2} and \eqref{eq:alpha-beta-descent-3}. 
\end{proof} 
Recall that $P_{k}=P_k(a_{k})=P_{k-1}(x,y+a_kx^{k/n},y_1+\sigma(a_kx^{k/n}))$. In the following result, we see 
how the information about $ \ini_{k}(C) $ provided by Lemma \ref{lem:alpha-beta-descent} 
allows to better understand the properties of the particular case $C=a_k$
and more precisely of the element
$E_k$ obtained after the $k$-th substitution. 
\begin{corollary}\label{cor:height-multiplicity}
Let $b=\Bot(E_{k})$. With the notation of Lemma \ref{lem:alpha-beta-descent}, the following statements hold:
 \begin{enumerate}
 \item In any case, the multiplicity of $a_k$ as a root of $\beta_{k}(C)$ is at least
 $b-1$.
 \item If $\Phi_{ k-1 ,k/n}(C)\not\equiv 0$ (non-dicritical case), then the multiplicity
 of $a_{k}$ as a root of $\Phi_{ k-1 ,k/n}(C)$ is at least $b$.
 \item If $\Phi_{k-1,k/n}(C)\equiv 0$ (dicritical case), then
 \begin{equation}\label{eq:dicritical-residue}
 A^{k}_{\iota_{j} \, j}(C) + \delta_{k/n} B^k_{\iota_j \, j}(C) = 0
 \end{equation}
 for all $C\in \mathbb{C}$ and $j=1,\ldots, \Top(E_k)$, and in particular for $C=a_k$.
 \end{enumerate}
\end{corollary}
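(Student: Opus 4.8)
The plan is to derive all three statements directly from Lemma \ref{lem:alpha-beta-descent}, once we translate the geometric datum $b=\Bot(E_k)$ into vanishing conditions on the coefficients $A^{k}_{\iota_{j}\,j}(a_k)$ and $B^{k}_{\iota_{j}\,j}(a_k)$. The first and only nontrivial step is this translation. The substitution $y=y+a_kx^{k/n}$ preserves the value $\iota+(k/n)j$ on every monomial $x^{\iota}y^{j}$, so $\mathcal{N}(P_k)\subset L^{+}_{k/n}(P_{k-1})$; since the topmost vertex $(\overline{\iota},t)$ of $E_{k-1,k/n}$ survives into $\mathcal{N}(P_k)$ by Lemma \ref{lem:same-newton-polygon} (and remains the topmost point of $\mathcal{N}(P_k)$ on that line, with $t=\Top(E_k)$ by Lemma \ref{lem:top-less-bottom}), we get $L_{k/n}(P_k)=L_{k/n}(P_{k-1})$ and hence $E_k=L_{k/n}(P_{k-1})\cap\mathcal{N}(P_k)$. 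The monomials of $P_k$ lying on this line are precisely those of $\ini_{k}(a_k)$, namely $A^{k}_{\iota_{j}\,j}(a_k)x^{\iota_j}y^{j}$ for $0\le j\le t$ and $B^{k}_{\iota_{j}\,j}(a_k)x^{\iota_j+o_{\sigma}}y^{j-1}y_1$ for $1\le j\le t$, and both of these sit at the cloud point $(\iota_j,j)$. Therefore, for each $j<b$ the point $(\iota_j,j)$ is not in $\mathcal{C}(P_k)$, which forces
\[ A^{k}_{\iota_{j}\,j}(a_k)=0 \quad\text{and}\quad B^{k}_{\iota_{j}\,j}(a_k)=0,\qquad 0\le j\le b-1. \]

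With this observation, the three parts are bookkeeping. For (1), equation \eqref{eq:alpha-beta-descent-3} gives $B^{k}_{\iota_{j}\,j}(a_k)=\frac{1}{(j-1)!}\beta_{k}^{(j-1)}(a_k)$, so the vanishing above for $1\le j\le b-1$ yields $\beta_{k}^{(i)}(a_k)=0$ for $i=0,\ldots,b-2$, i.e. $a_k$ is a root of $\beta_k$ of multiplicity at least $b-1$ (note $b-1\le t-1<t$, so the range is legitimate). For (2), combine \eqref{eq:alpha-beta-descent-1}, which gives $\Phi_{k-1,k/n}(a_k)=A^{k}_{\iota_{0}\,0}(a_k)=0$, with \eqref{equ:AdB}, which gives $\frac{1}{j!}\Phi^{(j)}_{k-1,k/n}(a_k)=A^{k}_{\iota_{j}\,j}(a_k)+\delta_{k/n}B^{k}_{\iota_{j}\,j}(a_k)=0$ for $1\le j\le b-1$; hence the derivatives of $\Phi_{k-1,k/n}$ up to order $b-1$ all vanish at $a_k$, and since $\Phi_{k-1,k/n}\not\equiv 0$ makes the order of vanishing finite, $a_k$ is a root of multiplicity at least $b$. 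For (3), \eqref{equ:AdB} shows $A^{k}_{\iota_{j}\,j}(C)+\delta_{k/n}B^{k}_{\iota_{j}\,j}(C)=\frac{1}{j!}\Phi^{(j)}_{k-1,k/n}(C)$ for $1\le j\le t=\Top(E_k)$, and if $\Phi_{k-1,k/n}\equiv 0$ then every $\Phi^{(j)}_{k-1,k/n}\equiv 0$, so the left-hand side vanishes identically in $C$, in particular at $C=a_k$.

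The main obstacle is thus the first paragraph: justifying that $E_k$ is exactly $L_{k/n}(P_{k-1})\cap\mathcal{N}(P_k)$, so that $\Bot(E_k)<j$ really does kill the coefficient pair at $(\iota_j,j)$. This is essentially already encoded in the way $\ini_{k}(C)$ is written in \eqref{eq:initial-form} (it is indexed by $\iota_j=\nu-jk/n$ with $t=\Top(E_{k-1,k/n})=\Top(E_k)$), and it follows from the invariance of the anti-diagonal coordinate under the substitution plus Lemma \ref{lem:same-newton-polygon}; I would state it explicitly as a one-line observation before running the three computations. Everything after that is a routine application of the Taylor-type identities of Lemma \ref{lem:alpha-beta-descent}.
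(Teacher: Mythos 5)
Your proof is correct and follows essentially the same route as the paper's: translate $b=\Bot(E_k)$ into the vanishing of $A^{k}_{\iota_j\,j}(a_k)$ and $B^{k}_{\iota_j\,j}(a_k)$ for $j<b$, then read off the three statements from Equations \eqref{eq:alpha-beta-descent-1}, \eqref{eq:alpha-beta-descent-3} and \eqref{equ:AdB} of Lemma \ref{lem:alpha-beta-descent}. The only substantive difference is presentational: the paper compresses your first paragraph into the one-line observation that ``there are no points in $E_k$ with ordinate less than $b$ by definition,'' while you spell out why the anti-diagonal is preserved and why the supporting line of co-slope $k/n$ is unchanged (one could be marginally more careful and note that the invariance of $\iota+(k/n)j$ must also be checked on the monomials carrying $y_1$, since the substitution is $(y,y_1)\mapsto(y+a_kx^{k/n},\,y_1+\sigma(a_kx^{k/n}))$, but that verification is routine and your conclusion is right).
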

\begin{proof} 
 The last statement is an immediate consequence of Equation \eqref{equ:AdB} and the dicritical condition $\Phi_{k-1,k/n}(C)\equiv 0$.

There are no
 points in $E_k$ with ordinate less than $b$ by definition. Since 
 \begin{equation} \label{eq:alpha-beta-descent-3-aux}
 \beta_{k}^{(j-1)}(a_{k}) = (j-1)!B^{k}_{\iota_{j} \, j}(a_{k}) = 0
 \end{equation}
 for $j=0,\ldots, b-1$ by Equation \eqref{eq:alpha-beta-descent-3}, we deduce Statement (1). Moreover, we obtain 
 $\Phi^{(j)}_{ k-1 ,k/n}(C) =0$ for any $0 \leq j \leq b-1$ by Equation \eqref{equ:AdB}.
 Statement (2) follows.
\end{proof}

\subsection{An excursus on dicritical elements}\label{subs:dicritical} In the arguments to come, the apparition of a
dicritical characteristic exponent is problematic. In the setting of differential
equations, this mirrors the fact that dicritical divisors appearing in the reduction of
singularities of a holomorphic $1$-form ``complicate'' the combinatorial structure
of the residues and indices associated with the exceptional divisors (see \cite{Camacho-Sad-Coloquio}, for instance).

Consider the family of analytic branches $s_{k,c}'(x)$ given by the $k$-truncation of $s(x)$ with $a_k$ replaced with 
a complex parameter $c$, with $c\neq 0$:
\begin{equation*}
 s_{k,c}'(x) = \sum_{i=1}^{k-1}a_ix^{i/n} + cx^{k/n}.
\end{equation*}
This family of curves admits a common desingularization in a sequence $\pi_{k}$ of point blow-ups, ending at an exceptional divisor
which, for the sake of economy, we shall call $D_k$. Each nonzero $c\in \mathbb{C}$ corresponds to a point $Q_{c}\in D_{k}$, and through $Q_{c}$ passes a single nonsingular curve $\overline{\Gamma}_c$, transverse to $D_k$, such that the parametrization of
$\pi_k(\overline{\Gamma}_c)$ coincides with $s_{k,c}'(x)$.

Let $P=A(x,y)+B(x,y)y_1$ be differential, $\mathcal{F}$ be the foliation associated with
$\omega=A(x,y)dx+B(x,y)dy$. Let 
$\mathcal{F}_k$ be the strict transform of $\mathcal{F}$ by $\pi_k$. In the
non-dicritical case, i.e. $\Phi_{k-1,k/n}(C) \not \equiv 0$, the divisor $D_k$ is invariant by ${\mathcal F}_{k}$. 
The roots $c$ of $\Phi_{k-1,k/n}(C)$ 
correspond precisely with 
the singular points $Q_{c}$ of $\mathcal{F}_k$ in $D_k$.

The dicritical case, where $\Phi_{k-1,k/n}(C)\equiv 0$, is totally different. Indeed, it 
is easy to prove that for those $c \in \mathbb{C}^{*}$ such that $\Bot(E_{k})=1$, there exists a power series $s_{k,c}(x)$ in ${\mathbb C}[[x^{\frac{1}{n}}]]$, 
that coincides with $s_{k,c}'(x)$ up to order $\leq k/n$, which is a solution of $P$ (see \cite{cano2012power}, for instance). In such a case, 
the ${\mathcal F}$-invariant curve $\Gamma_c$, whose parametrization is $s_{k,c}(x)$, satisfies that its strict transform 
$\tilde{\Gamma}_c$ by $\pi_{k}$ intersects $D_k$ at $Q_c$, $Q_c$ is a regular point of ${\mathcal F}_k$
and $\tilde{\Gamma}_{c}$ is a smooth curve transverse to $D_k$. 
However, there may be special points in $D_{k}$, that is, values
of $c$, where $\mathcal{F}_k$ is either singular or tangent to $D_k$, and these are the ones we need to control, as $a_k$ may be one of them. It
is these points which present a challenge. 
The following straightforward consequence of
Lemma \ref{lem:alpha-beta-descent} asserts that these challenging points are finite in
number.
\begin{corollary}\label{cor:dicritical-generic-bottom-1}
 Assume $E_{k-1,k/n}$ is dicritical. Consider the point $(\iota_{1},1)$ of the line $L_{k/n} (P_k)$ of height $1$.
 Then
 \begin{equation*}
 B^{k}_{\iota_{1} 1}(C)\not\equiv 0.
 \end{equation*}
 As a consequence, if $\Bot(E_k)>1$, then $a_k$ is a root of the polynomial
 $B^{k}_{\iota_{1} 1}(C)$.
\end{corollary}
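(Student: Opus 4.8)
The plan is to deduce both assertions from the decomposition in Lemma~\ref{lem:alpha-beta-descent}, specialised to the dicritical case. Since $E_{k-1,k/n}$ is dicritical, we have $\Phi_{k-1,k/n}(C)\equiv 0$, and therefore, by Equation~\eqref{eq:phi-alpha-beta}, $\alpha_k(C)\equiv -\delta_{k/n}C\,\beta_k(C)$. In particular $\beta_k(C)$ cannot be the zero polynomial: if it were, then $\alpha_k(C)\equiv 0$ as well, so all the coefficients $A^{k-1}_{\iota_j\,j}$ and $B^{k-1}_{\iota_j\,j}$ for $j=0,\ldots,t$ would vanish; but $(\overline\iota,t)$ is by construction the topmost vertex of $E_{k-1,k/n}$, hence lies in $\mathcal{C}(P_{k-1})$, so at least one of $A^{k-1}_{\iota_t\,t}$, $B^{k-1}_{\iota_t\,t}$ is nonzero, a contradiction. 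This shows $\beta_k(C)\not\equiv 0$.

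Next I would translate the nonvanishing of $\beta_k$ into the statement about $B^k_{\iota_1\,1}(C)$. By Equation~\eqref{eq:alpha-beta-descent-3} with $j=1$, one has $B^{k}_{\iota_{1}\,1}(C) = \beta_{k}(C)$, so $B^{k}_{\iota_{1}\,1}(C)\not\equiv 0$ is precisely $\beta_k(C)\not\equiv 0$, which we just established. (One should check that the point $(\iota_1,1)$ is indeed the point of $L_{k/n}(P_k)$ at height $1$: this is exactly the indexing convention $\iota_j=\nu-jk/n$ fixed before Equation~\eqref{eq:alpha-beta}, so $\iota_1=\nu-k/n$, and that point lies on $L_{k/n}(P_{k-1})=L_{k/n}(P_k)$ by Lemma~\ref{lem:same-newton-polygon}.)

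For the final consequence, suppose $\Bot(E_k)>1$. Then there is no point of $E_k$ at height $1$, i.e. the coefficient attached to the monomial of the $k$-th substitution $P_k=P_k(a_k)$ at the lattice point $(\iota_1,1)$ vanishes. That coefficient is $A^{k}_{\iota_1\,1}(a_k)x^{\iota_1}y + \delta_{k/n}B^{k}_{\iota_1\,1}(a_k)x^{\iota_1+o_\sigma}y^{\,0}y_1$ contribution; more precisely, in the dicritical case Corollary~\ref{cor:height-multiplicity}(3) gives $A^{k}_{\iota_1\,1}(a_k)+\delta_{k/n}B^{k}_{\iota_1\,1}(a_k)=0$, and $\Bot(E_k)>1$ forces $B^{k}_{\iota_1\,1}(a_k)=0$ (since that is the only surviving contribution to $\mathcal{C}(P_k)$ at height $1$ and abscissa $\iota_1$). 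Hence $a_k$ is a root of $B^{k}_{\iota_1\,1}(C)$, which is a nonzero polynomial by the first part, so this is a genuine (finite) constraint.

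The only mildly delicate point is the bookkeeping of which lattice points can contribute to height $1$ on the line $L_{k/n}(P_k)$ after the substitution, so as to justify that $\Bot(E_k)>1$ really does kill $B^k_{\iota_1\,1}(a_k)$ rather than merely some combination — but this is already encoded in Equations~\eqref{eq:initial-form} and \eqref{eq:alpha-beta-descent-3}, which identify $B^k_{\iota_1\,1}(a_k)$ as the $y_1$-coefficient of $\ini_k(a_k)$ at height $1$; since $E_k\subset L_{k/n}(P_k)$ and $E_k$ has no point at height $1$, this coefficient must be zero, together with the $y$-coefficient $A^k_{\iota_1\,1}(a_k)$. So there is no real obstacle; the proof is essentially a one-line specialisation of Lemma~\ref{lem:alpha-beta-descent} once the nonvanishing of $\beta_k$ is observed.
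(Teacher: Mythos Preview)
Your proof is correct and follows essentially the same route as the paper's. Both arguments identify $B^{k}_{\iota_1\,1}(C)=\beta_k(C)$ via Equation~\eqref{eq:alpha-beta-descent-3} and then show $\beta_k\not\equiv 0$ by contradiction using the dicritical relation together with the fact that the top vertex of $E_{k-1,k/n}$ lies in the cloud; the paper phrases this at the top vertex of $P_k$ using Corollary~\ref{cor:height-multiplicity}(3), while you phrase it directly in terms of the $P_{k-1}$-coefficients defining $\alpha_k,\beta_k$, but these coincide at the top vertex by Lemma~\ref{lem:same-newton-polygon}. Your treatment of the ``as a consequence'' part is slightly more roundabout than needed (the point $(\iota_1,1)\notin\mathcal{C}(P_k)$ immediately gives $A^{k}_{\iota_1\,1}=B^{k}_{\iota_1\,1}=0$ by the definition of the cloud, without appealing to Corollary~\ref{cor:height-multiplicity}(3)), and the stray $\delta_{k/n}$ in your displayed ``contribution'' should not be there, but the logic is sound.
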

\begin{proof}
 If $B^{k}_{\iota_{1} 1}(C)\equiv 0$, then $\beta_k( C) \equiv 0$ by Equation \eqref{eq:alpha-beta-descent-3}.
 We obtain $B^{k}_{\iota_{t} t} =0$, where $(\iota_{t},t)=\Top(E_{k})$, by applying again Equation \eqref{eq:alpha-beta-descent-3}. 
 Since $(\iota_{t},t)$ is a vertex of $P_k$, we deduce $A^k_{\iota_{t} t}\neq 0$.
 As $E_{k-1,k/n}$ is dicritical, and
 $\delta_{k/n}\neq 0$, Equation (3) of Corollary \ref{cor:height-multiplicity} implies that
 both $A^k_{\iota_{t} t}$ and $B^k_{\iota_{t} t}$ are nonzero, contradicting $B^{k}_{\iota_{t} t} =0$.
\end{proof}

Finally, by their nature, dicritical exponents impose strict relations between the
coefficients in $A(x,y)$ and those in $B(x,y)$ belonging to the corresponding
element. This is precisely item (3) in Corollary \ref{cor:height-multiplicity}: whenever
$A^{k}_{\iota j},B^{k}_{\iota j}$ come from a dicritical element $E_{k-1,k/n}$, then
$A^k_{\iota j}=-\delta_{k/n}B^k_{\iota j}$. This equality will play an essential role in all
our arguments in the dicritical case. Among many other things, it will prevent the
existence of immediately consecutive dicritical elements (Lemma 13), and, even more, the
existence of consecutive elements $E_k,E_{k+1},\ldots, E_{l}$ with $E_k,E_l$ dicritical,
and the ones in between satisfying a ``bad'' property (Lemma \ref{lem:inter-dicritical}). Remarkably, this latter fact is true for differential equations and
\emph{most} $q$-difference equations, and this is the only point in which both types of
equations differ. We shall collect the differential and these generic $q$-difference
equations under the concept of \emph{reasonable} equation (Definition
\ref{def:reasonable}).

In summary, dicriticalness is, in some sense, a complication but
its very nature imposes conditions which can be taken advantage of to overcome most of
said complication.

\section{Main Result}
\setcounter{theoremA}{0}
\setcounter{corollaryA}{0}

We show Theorem \ref{the:main}, Corollary \ref{cor:a} and Theorem \ref{teo:reasonable} 
in this section. As a starting point, let $P$ be a 1-covered equation -- that is, with only integral exponents -- and let
\begin{equation*}
 s(x) = \sum_{i\geq 1}a_ix^{i/n}
\end{equation*}
be a solution of $P$ in Puiseux form. Let $e_1,\ldots, e_g$ be the characteristic
exponents of $s(x)$ and $r_1,\ldots, r_g$ be the characteristic factors.

Before proceeding, we provide some definitions and notation which will simplify the statements.

\begin{definition}\label{def:factor-of-exponent}
 For an integer $k\geq 1$, the \emph{factor corresponding to $k$}, denoted $\rho_k$, is defined as follows:
 \begin{itemize}
 \item If $k$ is not a characteristic exponent, then $\rho_k=1$,
 \item otherwise, if $k$ is equal to the characteristic exponent $e_{l}$, then $\rho_k=r_l$.
 \end{itemize}
 That is, $\rho_{e_l}=r_l$ and $\rho_k=1$ if $k$ is not a characteristic exponent.
\end{definition}
The main relation between $\Top(E_{k})$ and $\Bot(E_{k})$ -- notice that this is \emph{after} the substitution of the term $a_k x^{k/n}$ -- 
when $k$ is a characteristic exponent is:
\begin{proposition}\label{pro:descent-step}
 Let $k \in {\mathbb N}$. We have
 \begin{enumerate}
 \item If $E_{k-1,k/n}$ is non-dicritical or $\rho_k = 1$, then
 \begin{equation*}
 \Bot(E_{k}) \leq \frac{\Top(E_{k})}{\rho_k}.
 \end{equation*}
 \item If $E_{k-1,k/n}$ is dicritical, then
 \begin{equation*}
 \Bot(E_{k}) \leq \frac{\Top(E_{k})}{\rho_k} + \frac{\rho_k-1}{\rho_k}.
 \end{equation*}
 or, what amounts to the same, $\Top(E_{k}) \geq \rho_k \Bot(E_{k}) - (\rho_k-1)$.
 \end{enumerate}
 Both inequalities are sharp.
\end{proposition}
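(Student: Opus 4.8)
The plan is to reduce immediately to the only nontrivial situation, namely $k=e_\ell$ a characteristic exponent with $\rho_k=r_\ell\geq 2$: when $\rho_k=1$ both asserted inequalities collapse to $\Bot(E_k)\leq\Top(E_k)$, which holds by definition. So fix such a $k$, set $t=\Top(E_k)=\Top(E_{k-1,k/n})$ and $b=\Bot(E_k)$, and recall from the proof of Lemma \ref{lem:new-puiseux-exponent-new-side} that $\mathcal{C}(P_{k-1})\subset\frac{1}{r_1\cdots r_{\ell-1}}\mathbb{Z}\times\mathbb{Z}$. The key combinatorial observation is a residue lemma: any two points of $\mathcal{C}(P_{k-1})$ lying on the supporting line $L_{k/n}(P_{k-1})$ have ordinates congruent modulo $r_\ell$. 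Indeed such points satisfy $j-j'=-\frac{n}{k}(\iota-\iota')$ with $\iota-\iota'\in\frac{1}{r_1\cdots r_{\ell-1}}\mathbb{Z}$, and since $r_1\cdots r_{\ell-1}\cdot\frac{e_\ell}{n}=\frac{p_\ell}{r_\ell}$ with $\gcd(p_\ell,r_\ell)=1$, the integer $j-j'$ must be a multiple of $r_\ell$. Consequently $\Phi_{k-1,k/n}(C)$ and $\beta_k(C)=\sum_{j=1}^t B^{k-1}_{\iota_j j}C^{j-1}$ are each of the form $C^{j_0}\cdot(\text{polynomial in }C^{r_\ell})$; and because $C\mapsto C^{r_\ell}$ is a local biholomorphism away from $0$, the multiplicity of any nonzero root $a$ of such an expression equals the multiplicity of $a^{r_\ell}$ as a root of the polynomial in $C^{r_\ell}$, which is at most its degree.

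For the non-dicritical case (part (1) with $\rho_k=r_\ell$), write $\Phi:=\Phi_{k-1,k/n}(C)=C^{j_1}\Psi(C^{r_\ell})$ with $j_1\geq 0$ the least exponent occurring and $\Psi(0)\neq 0$, so that $\deg_C\Phi=j_1+r_\ell\deg\Psi$; since the exponents of $\Phi$ are ordinates of points of $\mathcal{C}(P_{k-1})$ on $L_{k/n}(P_{k-1})$, we have $\deg_C\Phi\leq t$. By Corollary \ref{cor:height-multiplicity}(2), $a_k$ is a root of $\Phi$ of multiplicity at least $b$, and $a_k\neq 0$ because $e_\ell$ is a characteristic exponent; hence $a_k^{r_\ell}$ is a root of $\Psi$ of multiplicity at least $b$, so $\deg\Psi\geq b$, whence $t\geq j_1+r_\ell b\geq r_\ell b$, i.e. $\Bot(E_k)=b\leq t/r_\ell=\Top(E_k)/\rho_k$.

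For the dicritical case (part (2)), $\Phi_{k-1,k/n}(C)\equiv 0$, so Corollary \ref{cor:height-multiplicity}(2) is unavailable and we work with $\beta_k$ instead. First, $\beta_k(C)=B^k_{\iota_1 1}(C)\not\equiv 0$ by Equation \eqref{eq:alpha-beta-descent-3} and Corollary \ref{cor:dicritical-generic-bottom-1}. Next, the topmost vertex $(\iota_t,t)$ of $E_{k-1,k/n}$ satisfies $B^{k-1}_{\iota_t t}\neq 0$: being a vertex of $\mathcal{N}(P_{k-1})$, one of $A^{k-1}_{\iota_t t},B^{k-1}_{\iota_t t}$ is nonzero, and by Corollary \ref{cor:height-multiplicity}(3) together with $\delta_{k/n}\neq 0$ these are proportional, hence both nonzero. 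Therefore $\beta_k(C)=C^{j_1-1}R(C^{r_\ell})$ with $R(0)\neq 0$, $\deg_C\beta_k=t-1$ and $\deg R=(t-j_1)/r_\ell$, where $j_1\geq 1$ is the least $j\in\{1,\dots,t\}$ with $B^{k-1}_{\iota_j j}\neq 0$. By Corollary \ref{cor:height-multiplicity}(1), $a_k$ is a root of $\beta_k$ of multiplicity at least $b-1$, and again $a_k\neq 0$, so $a_k^{r_\ell}$ is a root of $R$ of multiplicity at least $b-1$; thus $\deg R\geq b-1$, giving $t-j_1\geq r_\ell(b-1)$ and hence $t\geq 1+r_\ell(b-1)=r_\ell b-(r_\ell-1)$. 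This is exactly $\Top(E_k)\geq\rho_k\Bot(E_k)-(\rho_k-1)$; note that the extra slack $(\rho_k-1)/\rho_k$ in part (2) comes precisely from the unit shift $j_1\geq 1$ forced by the definition of $\beta_k$ (the $y_1$-factor pushing the support one step left/up). For sharpness: for (1), $P\equiv y^{r}-x+x^2y_1=0$ has solution $s(x)=x^{1/r}$ with $e_1=1$, $r_1=r$, $E_{0,1/r}$ non-dicritical, and after the substitution $\Top(E_1)=r=\rho_1\Bot(E_1)$; for (2), $P\equiv 2y-3xy_1=0$ has solution $s(x)=cx^{2/3}$ with $r_1=3$, $E_{0,2/3}$ dicritical, and $\Top(E_2)=\Bot(E_2)=1=\rho_2\Bot(E_2)-(\rho_2-1)$.

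I expect the main obstacle to be not any single computation but the bookkeeping around the residue lemma: one must make sure that "lying on $E_{k-1,k/n}$" really forces an ordinate into a single residue class modulo $r_\ell$ — this is where the hypothesis that $P$ is $1$-covered, propagated through all earlier substitutions so that $\mathcal{C}(P_{k-1})$ lies on the grid $\frac{1}{r_1\cdots r_{\ell-1}}\mathbb{Z}\times\mathbb{Z}$, is used — and, in the dicritical case, that the topmost vertex genuinely contributes a term to $\beta_k$, since otherwise the argument would only give $t\geq r_\ell(b-1)$ and lose the sharp constant. Checking that the auxiliary facts needed along the way ($\Bot(E_k)\geq 1$, $\Top(E_k)=\Top(E_{k-1,k/n})$, the $C$-independence of the top coefficients, $\beta_k\not\equiv 0$) are all available from Lemmas \ref{lem:same-newton-polygon}, \ref{lem:top-less-bottom}, \ref{lem:alpha-beta-descent} and Corollaries \ref{cor:height-multiplicity}, \ref{cor:dicritical-generic-bottom-1} is routine but should be done with care.
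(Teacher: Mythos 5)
Your argument is essentially identical to the paper's: the same residue observation (ordinates of points on $L_{k/n}(P_{k-1})$ congruent modulo $r_\ell$, using that $\mathcal{C}(P_{k-1})\subset\frac{1}{r_1\cdots r_{\ell-1}}\mathbb{Z}\times\mathbb{Z}$), followed by the multiplicity bounds from Corollary \ref{cor:height-multiplicity} applied to $\Phi_{k-1,k/n}$ in the non-dicritical case and to $\beta_k$ in the dicritical case, where the offset $\frac{\rho_k-1}{\rho_k}$ arises from the $y_1$-shift exactly as you say. One small slip: in your sharpness example for case (1), $s(x)=x^{1/r}$ is not a solution of $y^r-x+x^2y_1=0$ (the residual $\tfrac{1}{r}x^{1+1/r}$ does not vanish); the paper instead takes the pure algebraic equation $y^n-x=0$, which is still covered since $B\equiv 0$ is allowed.
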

Examples of equality are, for the non-dicritical case the algebraic equation $P\equiv y^n-x =0$
where $k=1$, $\rho_k=n$, $\Bot(E_{k})=1$ and $\Top(E_{k}) =n$, and for the
dicritical case $P\equiv py - n x y_1 =0$ in the differential setting, and
$P\equiv q^{p/n}y-y_1 =0$ in the $q$-algebraic one, where $\gcd (p,n)=1$, $k=p$, $\rho_k=n$
and $\Bot(E_{k})=\Top(E_{k}) =1$. {} 

\begin{proof}[Proof of Proposition \ref{pro:descent-step}]
Since $\Bot(E_{k}) \leq \Top(E_{k})$, the result is obvious if $\rho_k = 1$. 
Thus, we can assume that $k=e_l$ for some characteristic exponent $e_l$. 
For brevity, let $\Phi(C)=\Phi_{k-1,k/n}(C)$ be the initial polynomial of the element $E_{k-1,k/n}$. 
The argument hinges on Lemma \ref{lem:alpha-beta-descent}, but there are two cases.

\paragraph{\textbf{Non-dicritical case}} This means that $\Phi(C)\not \equiv 0$. Let
$\overline{h}$ be the multiplicity of $0$ as a root of $\Phi(C)$ ($\overline{h}$ may be $0$).
As $e_{l}$ is a characteristic exponent, Lemma \ref{lem:new-puiseux-exponent-new-side}
implies that $E_{k}$ is indeed a side of the Newton polygon $\mathcal{N}(P_{k})$, of
co-slope $k/n$. Write, as in Lemma \ref{lem:alpha-beta-descent},
$\Phi(C) = \alpha_{k}(C) + \delta_{k/n} C\beta_{k}(C)$. As $\Phi(C)\not \equiv 0$, it has degree less than or equal to $\Top(E_{k-1,k/n})=\Top(E_{k})$. 
 
Let $(\i, \Top(E_{k-1,k/n}))$ be the topmost vertex of $E_{k-1,k/n}$, 
which is also the topmost vertex of $E_{k}$ by Lemma \ref{lem:same-newton-polygon}.
We claim that any $(\iota^{\prime},j^{\prime})\in \mathcal{C}(P_{k -1}) \cap E_{k-1,k/n}$, with $j^{\prime}\leq \Top(E_{k-1,k/n})$, 
satisfies that
$s := \Top(E_{k-1,k/n}) - j^{\prime}$ is a multiple of
$r_l$.
Note that $E_{k-1,k/n}$ and $E_{k}$ have both co-slope $\frac{e_{l}}{n} = \frac{p_l}{r_1\cdots r_l}$ where $\gcd(p_l , r_l)=1$ by Equation \eqref{def:char}. We have
\begin{equation*}
 \iota^{\prime} = \iota + s \frac{k}{n} = \iota + s \frac{p_l}{r_1\cdots r_l} .
\end{equation*}
 As $(\iota^{\prime},j^{\prime})$ and $(\iota , \Top (E_{k-1,k/n}))$ belong to $L_{k/n} (P_{k-1})$, then
$\iota^{\prime}$ and $\iota$ belong to $\frac{\mathbb{Z}}{r_1\cdots r_{l-1}}$. It follows that 
\begin{equation*}
 s \frac{p_l}{r_l} = r_1 \hdots r_{l-1} (\iota^{\prime} - \iota) \in \mathbb{Z}
\end{equation*}
and thus $s$ is of the form $s=r_l\,r$ for some $r\in \mathbb{Z}_{\geq 0}$. As a consequence, $A^{k-1}_{\iota^{\prime} j^{\prime}}$ and 
$B^{k-1}_{\iota^{\prime} j^{\prime}}$ are $0$ except possibly when
$j^{\prime}=\Top(E_{k-1,k/n}) - r_{l}\,r$ for $r\in \mathbb{Z}_{\geq 0}$. Thus, $\Phi(C)$ can be
written as
 \begin{equation*}
 \Phi(C) = C^{\overline{h}}\overline{\Phi}(C^{r_l}),
 \end{equation*}
 that is, $\Phi(C)$ is, except for the factor $C^{\overline{h}}$, a polynomial in
 $C^{r_l}$. This implies that any root of $\Phi(C)$ different from $0$ has multiplicity
 at most $(\deg (\Phi(C)) - \overline{h})/r_l$. Let $m$ be the multiplicity of $a_{k}$
 as a root of $\Phi(C)$. Since $e_{l}$ is a characteristic exponent, $a_{k}\neq 0$, so
 that $m\leq (\deg (\Phi(C)) - \overline{h})/r_l$. Corollary
 \ref{cor:height-multiplicity} states that $m\geq \Bot(E_{k})$, as $E_{k-1, k/n}$ is
 non-dicritical. Thus, as $\Top(E_k)=\Top(E_{k-1,k/n})$, we obtain
 \begin{equation}\label{eq:bot-top-non-dicritical}
 \Bot(E_{k}) \leq m \leq \frac{\deg (\Phi(C)) - \overline{h}}{r_l} \leq
 \frac{\Top(E_{k})}{r_l} = \frac{\Top(E_{k})}{\rho_k},
 \end{equation}
 as desired.

 \paragraph{\textbf{Dicritical case}} 
 Denote by $(\iota_{j}, j)$ the point of ordinate $j$ in $L_{k/n} (P_{k-1})$.
 If\ $\Phi(C)\equiv 0 $, which is the dicritical condition, write $0=\Phi(C)=\alpha_{k}(C) + \delta_{k/n} C \beta_{k}(C)$. 
 Denote $b= \Bot (E_k)$. We have $A^{k}_{\iota_{b} b}+ \delta_{k/n} B^k_{\iota_{b} b}=0$ by Equation \eqref{eq:dicritical-residue}.
 Since $b= \Bot (E_k)$, at least one of them is non-vanishing and hence $A^{k}_{\iota_{b} b} \neq 0$ and $B^k_{\iota_{b} b} \neq 0$. 
 By definition, $B^{k}_{\iota_{j} j}(a_{k})=0$ for $j=1,\ldots, b-1$, so that by (\ref{eq:alpha-beta-descent-3}) in Lemma \ref{lem:alpha-beta-descent}
 or Equation \eqref{eq:alpha-beta-descent-3-aux}, we have $\beta_{k}^{(j-1)}(a_{k}) = 0$
 for $j=1,\ldots, b-1$. Moreover, we have $\beta_{k}^{(b-1)}(a_{k}) \neq 0$ since $B^k_{\iota_{b} b} \neq 0$. 
 Thus, $a_{k}$ is a root of multiplicity
 precisely $b-1$ of $\beta_{k}(C)$. 
 Let $\overline{h}\geq 1$ be the multiplicity of $0$ as a root of
 $C \beta_{k} (C)$. Considering that $r_{l}$ is a novel factor of the denominator of
 $k/n$, the same argument as in the previous case shows that both
 $\alpha_{k}(C)/C^{\overline{h}}$ and $C\beta_{k}(C)/C^{\overline{h}}$ are, indeed,
 polynomials in $C^{r_l}$. On the other hand, dicriticalness also gives that
 $\beta_{k}(C)$ has degree equal to $\Top(E_{k})-1$. Thus, we get
 \begin{equation*}
 \Bot(E_{k})-1 = b-1 \leq \frac{\deg (C \beta_{k}(C)) - \overline{h}}{r_l} =
 \frac{\Top(E_{k})-\overline{h}}{r_l},
 \end{equation*}
 from which follows that
 \begin{equation*}
 \Bot(E_{k}) \leq \frac{\Top(E_{k})}{r_l} + \frac{r_l-\overline{h}}{r_l} \leq
 \frac{\Top(E_{k})}{\rho_k} + \frac{\rho_k-1}{\rho_k},
 \end{equation*}
 as desired. The last inequality is an equality if $\overline{h}=1$, i.e. if $\beta_{k} (0) \neq 0$.
\end{proof} 
\begin{corollary}\label{cor:inequalities-T-b-sharp}
If $E_{k-1, k/n}$ is non-dicritical and the equality $ \Bot(E_{k}) = \Top(E_{k}) / \rho_k$ holds, then 
$\Phi_{k-1,k/n}(C)$ is of the form 
$\Phi_{k-1,k/n}(C) = u (C^{\rho_{k}} - a_{k}^{\rho_{k}})^ {\Bot(E_{k})}$ for some $u \in {\mathbb C}^{*}$.
\end{corollary}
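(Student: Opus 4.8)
The plan is to revisit the chain of inequalities that proves the non-dicritical case of Proposition \ref{pro:descent-step} and to observe that the hypothesis $\Bot(E_k)=\Top(E_k)/\rho_k$ forces every inequality in that chain to be an equality; the desired form of $\Phi_{k-1,k/n}$ then falls out of the algebraic structure already exhibited there. Write $r=\rho_k$ and $\Phi(C)=\Phi_{k-1,k/n}(C)$; let $\overline{h}$ be the multiplicity of $0$ as a root of $\Phi$, and let $m$ be the multiplicity of $a_k$ as a root of $\Phi$, which is well defined since $\Phi(a_k)=0$ by Lemma \ref{lem:coefficients-are-zeros}. In the proof of Proposition \ref{pro:descent-step} one established, in the non-dicritical case, that $\Phi(C)=C^{\overline{h}}\overline{\Phi}(C^{r})$ for some polynomial $\overline{\Phi}$, that $\deg\Phi\le\Top(E_k)$, and the estimate (see \eqref{eq:bot-top-non-dicritical})
\[
 \Bot(E_k)\ \le\ m\ \le\ \frac{\deg\Phi-\overline{h}}{r}\ \le\ \frac{\Top(E_k)}{r}.
\]

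First I would push the equality through this chain. Since the two ends now coincide by hypothesis, all four quantities are equal; in particular $\deg\Phi-\overline{h}=\Top(E_k)\ge\deg\Phi$, so $\overline{h}=0$ and $\deg\Phi=\Top(E_k)=r\,\Bot(E_k)$, while $m=\Bot(E_k)$. As $\overline{h}=0$, the factor $C^{\overline{h}}$ disappears and $\Phi(C)=\overline{\Phi}(C^{r})$ is genuinely a polynomial in $C^{r}$, of degree $\deg\overline{\Phi}=\deg\Phi/r=\Bot(E_k)=m$. Note also that $\overline{h}=0$ together with $\Phi(a_k)=0$ forces $a_k\neq 0$, since otherwise $0$ would be a root of $\Phi$.

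Next I would transfer the multiplicity information to $\overline{\Phi}$. Since $a_k\neq 0$, the map $C\mapsto C^{r}$ is a local biholomorphism near $a_k$ sending $a_k$ to $a_k^{r}$, so $\Phi=\overline{\Phi}(C^{r})$ vanishes at $a_k$ to the same order as $\overline{\Phi}$ vanishes at $a_k^{r}$; that is, $a_k^{r}$ is a root of $\overline{\Phi}$ of multiplicity $m$. But $\overline{\Phi}$ has degree exactly $m$, so it has no other root and $\overline{\Phi}(T)=u\,(T-a_k^{r})^{m}$ for some $u\in\mathbb{C}^{*}$ (its leading coefficient). Substituting back gives $\Phi(C)=u\,(C^{r}-a_k^{r})^{m}=u\,(C^{\rho_k}-a_k^{\rho_k})^{\Bot(E_k)}$, as claimed.

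There is essentially no deep obstacle here, as the real work was already carried out inside Proposition \ref{pro:descent-step}. The only mildly delicate point is to cite each inequality in the displayed chain with the correct orientation, so that equality at the two ends genuinely propagates to $\overline{h}=0$, $\deg\Phi=\Top(E_k)$ and $m=\Bot(E_k)$ simultaneously; one should also make sure the structural identity $\Phi(C)=C^{\overline{h}}\overline{\Phi}(C^{r})$ and the bound $\deg\Phi\le\Top(E_k)$ are quoted exactly as established there rather than re-derived. If one prefers to avoid the biholomorphism remark, the last step can be argued directly: the roots of a polynomial in $C^{r}$ are permuted by multiplication by a primitive $r$-th root of unity $\zeta$ with constant multiplicity along orbits, so $a_k,\zeta a_k,\ldots,\zeta^{r-1}a_k$ are $r$ distinct roots of $\Phi$, each of multiplicity at least $m$, accounting for $rm=\deg\Phi$ zeros; hence $\Phi(C)=u\prod_{i=0}^{r-1}(C-\zeta^{i}a_k)^{m}=u\,(C^{r}-a_k^{r})^{m}$.
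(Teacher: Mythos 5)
Your strategy is the same as the paper's (revisit the chain of inequalities in the non-dicritical part of the proof of Proposition~\ref{pro:descent-step} and force all inequalities to be equalities), but there is a genuine gap concerning the case $\rho_k=1$, and in particular $a_k=0$.

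The facts you quote from the proof of Proposition~\ref{pro:descent-step} -- the decomposition $\Phi(C)=C^{\overline{h}}\overline{\Phi}(C^{\rho_k})$ and the middle inequality $m\le(\deg\Phi-\overline{h})/\rho_k$ -- are established there only under the standing assumption that $\rho_k>1$, i.e.\ that $k$ is a characteristic exponent; the proof of Proposition~\ref{pro:descent-step} disposes of $\rho_k=1$ in its first line without ever setting up that chain. The reason this matters is that the inequality $m\le(\deg\Phi-\overline{h})/\rho_k$ is the bound on the multiplicity of a root \emph{different from $0$}, and $a_k\neq 0$ holds for free when $k$ is a characteristic exponent but not otherwise. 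If $\rho_k=1$ and $a_k=0$ (a perfectly ordinary situation: a gap in the exponents of $s(x)$), then $m=\overline{h}$, the middle inequality can fail, and your deduction $\overline{h}=0$ is simply false. Your sentence ``\,$\overline{h}=0$ together with $\Phi(a_k)=0$ forces $a_k\neq 0$\,'' does not repair this, because it is circular: $\overline{h}=0$ was itself derived from a chain that presupposes $a_k\neq 0$.

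The statement you are proving is still true in the problematic case, but it needs its own (easier) argument, which is what the paper supplies: when $\rho_k=1$ one has $\deg\Phi\le\Top(E_k)=\Bot(E_k)\le m$ directly from Corollary~\ref{cor:height-multiplicity}, so $\Phi(C)=u(C-a_k)^{\Bot(E_k)}$ without any reference to $\overline{h}$ or to the structure of $\Phi$ as a polynomial in $C^{\rho_k}$; this covers $a_k=0$ (where in fact $\Phi(C)=uC^{\Bot(E_k)}$ and $\overline{h}=\Bot(E_k)\neq 0$). Your argument as written works verbatim once you add the hypothesis $\rho_k>1$, or, equivalently, once you split off $\rho_k=1$ and handle it by the one-line degree count above. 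The rest -- transferring the multiplicity to $\overline{\Phi}$ via the root-of-unity orbit, or the local-biholomorphism remark -- is fine.
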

\begin{proof}
 Assume first $\rho_k=1$. Since the degree of $\Phi_{k-1,k/n}(C)$ is at most
 $\Top(E_{k})$ and $a_k$ is a root of $\Phi_{k-1,k/n}(C)$ of multiplicity at least
 $\Bot(E_{k})$ (Corollary \ref{cor:height-multiplicity}), it follows that
 $\Phi_{k-1,k/n}(C) = u (C - a_{k})^ {\Bot(E_{k})}$ for some $u \in {\mathbb C}^{*}$.
 
 Assume now $\rho_{k}>1$ and consider the notation in the proof of Proposition
 \ref{pro:descent-step}. The equality $ \Bot(E_{k}) = \Top(E_{k}) / \rho_k$ implies by
 \eqref{eq:bot-top-non-dicritical}, that $\overline{h}=0$ and
 $\deg (\Phi (C)) = \Top(E_{k})$. As a consequence, $\Phi (C)$ is a polynomial in
 $C^{\rho_k}$ with a non-vanishing root $a_k$ of multiplicity $\deg (\Phi
 (C))/\rho_k$. Thus $\Phi (C) = u (C^{\rho_{k}} - a_{k}^{\rho_{k}})^ {\Bot(E_{k})}$ for
 some $u \in {\mathbb C}^{*}$.
\end{proof} 

\subsection{Proof of Theorem \ref{the:main}}
 Denote $\mu=\ord(s(x))$ and let $k_0=n\, \mu$.
 Since the coefficients $a_i$ of $s(x)$ are zero for
 $i=1,\ldots,k_0-1$, we have that $P=P_{k_0-1}$ 
 and
 \begin{equation*}
 H(P)\geq H(P,s(x))=\Top(E_{P,\mu})=\Top(E_{k_0-1,k_0/n})=\Top(E_{k_0}).
\end{equation*}
 We split the proof in two cases. 
 
 Assume $d=0$, that is, there are no dicritical characteristic exponents. We obtain
 $\Top(E_{k_0}) \geq \prod_{j=1}^{g} r_j \geq 1$ by applying Proposition
 \ref{pro:descent-step} iteratively from $j\geq k_0$, and the fact that
 $\Top(E_{j,(j+1)/n})\leq \Bot(E_{j,j/n})$ for any $j$, as in Lemma
 \ref{lem:top-less-bottom}.
 
 Now suppose $d \geq 1$. We get
 $\Top(E_{k_0})\geq r_1 \cdots r_{i_1-1} \Top(E_{e_{i_1}})$ by successive applications
 of the non-dicritical case of Proposition \ref{pro:descent-step}. Using again
 Proposition \ref{pro:descent-step}, now in the dicritical case, we get
 \begin{equation}\label{eq:repeated-dicr}
 r_1 \cdots r_{i_1-1} \Top(E_{e_{i_1}}) \geq
 r_1 \cdots r_{i_1} \Bot(E_{e_{i_1}}) - (r_1 \cdots r_{i_1}- r_1 \cdots r_{i_1-1})
 \end{equation}
 which gives, taking into account that $\Bot(E_{e_{i_1}}) \geq \Top(E_{e_{i_1} +1}) $,
 \begin{equation*}
 \Top(E_{k_0}) \geq
 r_1 \cdots r_{i_1} \Top(E_{e_{i_1} +1 }) - (r_1 \cdots r_{i_1}- r_1 \cdots r_{i_1-1}). 
 \end{equation*}
 We obtain the desired bound
 \begin{equation*}
 \Top(E_{k_0}) \geq \prod_{j=1}^g r_j - 
 \sum_{k=1}^d \left(\prod_{j=1}^{i_k} r_{j} - \prod_{j=1}^{i_k-1}r_j
 \right)
 \end{equation*}
 by iterating this argument. 

 \subsection{Bound without assumptions on dicritical exponents}We shall see next that
dicritical elements cannot be immediately consecutive, which will provide us with a bound in which the dicritical elements play no role. 
\begin{lemma}\label{lem:no-consecutive-dicritical}
 Let $Q=(\kappa, b)$ be the bottom point of the element $E_k$. 
 Assume $E_{k-1,k/n}$ is dicritical. Then the point $Q$ does not
 belong to any later dicritical element. As a consequence, if $k$ is a
 characteristic exponent, then the next characteristic exponent $e_{\ell}$ is either
 non-dicritical or the element $E_{e_{\ell}}$ satisfies
 \begin{equation*}
 \Top(E_{e_{\ell}}) \leq b -1.
 \end{equation*}
 \end{lemma}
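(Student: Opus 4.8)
The plan is to prove the first assertion by following the coefficients of $P_l$ at the fixed site $Q=(\kappa,b)$ for all $l\ge k$, and then to read off the dichotomy for the next characteristic exponent from the first assertion and the monotonicity of $\Top$ and $\Bot$. First I would record what being dicritical at step $k$ says about $Q$: as $Q=\Bot(E_k)$ is the bottom vertex of $E_k$, it is a vertex of $\mathcal N(P_k)$, so at least one of $A^k_{\kappa b}$, $B^k_{\kappa b}$ is nonzero; and since $1\le b\le\Top(E_k)$ by Remark \ref{rem:top-less-than-bottom}, item (3) of Corollary \ref{cor:height-multiplicity} at step $k$ gives $A^k_{\kappa b}+\delta_{k/n}B^k_{\kappa b}=0$, hence $A^k_{\kappa b}=-\delta_{k/n}B^k_{\kappa b}$ with both coefficients nonzero. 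The key point is that these coefficients never change again: $A^l_{\kappa b}=A^k_{\kappa b}$ and $B^l_{\kappa b}=B^k_{\kappa b}$ for every $l\ge k$. Indeed, for $l>k$ we have $E_k=E_{l-1,k/n}$ by \eqref{eq:E-k-stays}, so $Q\in L_{k/n}(P_{l-1})$ and $\mathcal N(P_{l-1})\subset L^{+}_{k/n}(P_{l-1})$; expanding $P_{l-1}\big(x,\,y+a_lx^{l/n},\,y_1+\sigma(a_lx^{l/n})\big)$, any term landing on $(\kappa,b)$ other than those already present in $P_{l-1}$ comes from a cloud point of $P_{l-1}$ of the form $(\kappa-i\,l/n,\ b+i)$ with $i\ge 1$ — a routine bookkeeping, valid verbatim for both operators, including the cross terms produced by $\sigma(a_lx^{l/n})$ — and such a point lies strictly below $L_{k/n}(P_{l-1})$ because $l/n>k/n$, hence is not in $\mathcal C(P_{l-1})$. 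So the $l$-th substitution leaves the coefficients at $Q$ untouched, and iterating gives the claim.

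Now suppose, aiming at a contradiction, that $Q$ belongs to some later dicritical element $E_{l-1,l/n}$ with $l>k$. Then $1\le b\le\Top(E_{l-1,l/n})=\Top(E_l)$ by Lemma \ref{lem:top-less-bottom}, and $Q$ is the point of ordinate $b$ on $L_{l/n}(P_{l-1})$, so item (3) of Corollary \ref{cor:height-multiplicity} at step $l$ gives $A^l_{\kappa b}+\delta_{l/n}B^l_{\kappa b}=0$; together with the invariance just proved and $A^k_{\kappa b},B^k_{\kappa b}\neq 0$, this forces $\delta_{l/n}=\delta_{k/n}$. But $\mu\mapsto\delta_\mu$ is injective on $\mathbb R_{>0}$ — immediately if $\delta_\mu=\mu$, and since $|q|\neq 1$ if $\delta_\mu=q^\mu$ (as $\delta_{l/n}=\delta_{k/n}$ would give $|q|^{(l-k)/n}=1$) — contradicting $l>k$. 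Hence $Q$ lies in no later dicritical element.

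For the consequence, let $e_\ell$ be the characteristic exponent following $k$ and suppose $e_\ell$ is dicritical; I must show $\Top(E_{e_\ell})\le b-1$. By Lemma \ref{lem:top-less-bottom} we already have $\Top(E_{e_\ell})=\Top(E_{e_\ell-1,e_\ell/n})\le\Bot(E_k)=b$, so it remains to exclude equality. If $\Top(E_{e_\ell-1,e_\ell/n})=b$, let $Q'$ be the topmost (hence leftmost) point of the element $E_{e_\ell-1,e_\ell/n}$ of $\mathcal N(P_{e_\ell-1})$; it is a boundary vertex of $\mathcal N(P_{e_\ell-1})$ of ordinate $b$. Since the elements of a Newton polygon are met along its boundary in order of increasing co-slope, $\min_x E_{e_\ell-1,e_\ell/n}\ge\max_x E_{e_\ell-1,k/n}=\max_x E_k=\kappa$, the last equality because $Q=\Bot(E_k)$ is the rightmost point of $E_k$; and on the finite–co-slope part of that boundary the ordinate strictly decreases with the abscissa, so a boundary vertex of ordinate $b$ with abscissa $\ge\kappa$ can only be $Q$ (if $Q$ sits at the left end of a horizontal bottom segment, then $E_{e_\ell-1,e_\ell/n}$ is the single vertex $Q$, and the same conclusion holds). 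Thus $Q=Q'\in E_{e_\ell-1,e_\ell/n}$, a dicritical element occurring after step $k$, contradicting the first part; therefore $\Top(E_{e_\ell})\le b-1$.

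I expect the main obstacle to be the coefficient–invariance claim in the first paragraph: one must verify carefully that every monomial that the substitution $y\mapsto y+a_lx^{l/n}$ moves onto $(\kappa,b)$ — including the cross terms created by the action of $\sigma$ on $a_lx^{l/n}$, which is where the differential and $q$-difference cases look different — originates strictly below $L_{k/n}(P_{l-1})$ and hence contributes nothing. Everything else is a combination of the monotonicity Lemmas \ref{lem:top-less-bottom} and \ref{lem:same-newton-polygon}, Corollary \ref{cor:height-multiplicity}, and the convex geometry of $\mathcal N$.
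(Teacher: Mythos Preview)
Your proof is correct and follows the same approach as the paper: use the dicritical relation $A^k_{\kappa b}+\delta_{k/n}B^k_{\kappa b}=0$ at $Q$, observe that the coefficients at $Q$ are unchanged by all later substitutions, and derive the contradiction $\delta_{l/n}=\delta_{k/n}$ from injectivity of $\mu\mapsto\delta_\mu$ (which is where $|q|\neq 1$ enters). The only differences are that the paper obtains the coefficient invariance by invoking Lemma~\ref{lem:same-newton-polygon} together with $b\ge\Top(E_{k,(k+1)/n})$ rather than reproving it directly as you do, and dispatches the consequence about $\Top(E_{e_\ell})$ as ``straightforward'' where you spell out the convex-geometry argument that the topmost vertex of $E_{e_\ell-1,e_\ell/n}$ at height $b$ must coincide with $Q$.
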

\begin{proof}
 Let $A^{k}_{\kappa b},B^{k}_{\kappa b}$ be the coefficients of
 $P_k=A^{k}(x,y)+B^{k}(x,y)y_{1}$ corresponding to the point $Q$. As $E_{k-1,k/n}$ is
 dicritical,$\Phi_{k-1,k/n}(C)\equiv 0$, and by Equation \eqref{eq:dicritical-residue} applied to $j=b$ and $C=a_{k}$,
 \begin{equation*}
 A^{k}_{\kappa b} + \delta_{k/n} B^{k}_{\kappa b} = 0,
 \end{equation*}
 so that $\delta_{k/n} = -A^{k}_{\kappa b}/B^{k}_{\kappa b}$. We claim that $Q$ does not
 belong to a dicritical element $E_{m-1,m/n}$ with $m > k$. By Lemma
 \ref{lem:same-newton-polygon} and $b \geq \Top (E_{k,(k+1)/n})$, 
 $A^{k}_{\kappa b}=A^m_{\kappa b}$ and
 $B^{k}_{\kappa b}=B^m_{\kappa b}$ for $m>k$. If $E_{m-1,m/n}$ were dicritical, we should
 have
 \begin{equation*}
 A^{k}_{\kappa b} + \delta_{m/n}B^{k}_{\kappa b} = 0, 
 \end{equation*} 
 which is a contradiction. Here we use the condition $|q| \neq 1$ in the case of
 $q$-difference equations, so that $q^{m/n}\neq q^{k/n}$. The result follows now
 straightforwardly.
\end{proof}
The first consequence of this Lemma is:
\begin{corollary}\label{cor:two-consecutive-dicritical}
 if $e_\ell,e_{\ell+1}$ are two consecutive dicritical characteristic exponents, then
 \begin{equation}
 \Top(E_{e_{\ell+1}}) \leq \Bot(E_{e_\ell})-1 < \frac{\Top(E_{e_\ell})}{r_\ell}.
 \end{equation}
 As a consequence, if {} $e_\ell,e_{\ell+1},\ldots, e_{\ell+p}$ {} is a sequence of consecutive dicritical characteristic exponents with {} $p \geq 1$, {} then
 \begin{equation}\label{eq:several-consecutive-dicritical}
 {} \Top(E_{e_{\ell+p}}) < \frac{\Top(E_{e_{\ell}})}{r_\ell\ldots r_{\ell+p-1}}. {}
 \end{equation} 
\end{corollary}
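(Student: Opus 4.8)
The plan is to combine Lemma~\ref{lem:no-consecutive-dicritical} with part~(2) of Proposition~\ref{pro:descent-step} and then induct on $p$. First I would settle the case $p=1$, that is, the two displayed inequalities. Since $e_\ell$ is a \emph{dicritical} characteristic exponent, the element $E_{e_\ell-1,e_\ell/n}$ is dicritical, so Lemma~\ref{lem:no-consecutive-dicritical} applies with $k=e_\ell$ and $b=\Bot(E_{e_\ell})$: the next characteristic exponent, namely $e_{\ell+1}$, is either non-dicritical or satisfies $\Top(E_{e_{\ell+1}})\le b-1$. As $e_{\ell+1}$ is dicritical by hypothesis, only the second alternative survives, which is precisely the first inequality $\Top(E_{e_{\ell+1}})\le \Bot(E_{e_\ell})-1$.

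For the strict inequality $\Bot(E_{e_\ell})-1<\Top(E_{e_\ell})/r_\ell$, I would apply part~(2) of Proposition~\ref{pro:descent-step} at $k=e_\ell$ (so that $\rho_k=r_\ell$), which yields
\[
\Bot(E_{e_\ell})\le \frac{\Top(E_{e_\ell})}{r_\ell}+\frac{r_\ell-1}{r_\ell},
\]
and hence
\[
\Bot(E_{e_\ell})-1\le \frac{\Top(E_{e_\ell})}{r_\ell}-\frac{1}{r_\ell}<\frac{\Top(E_{e_\ell})}{r_\ell},
\]
because $r_\ell\ge 2$. Chaining the two estimates gives the case $p=1$.

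For general $p\ge 1$ I would argue by induction on $p$, the base case being what was just proved. In the inductive step, observe that $e_{\ell+p-1}$ and $e_{\ell+p}$ are again two consecutive dicritical characteristic exponents, so the case $p=1$ applied to this pair gives $\Top(E_{e_{\ell+p}})<\Top(E_{e_{\ell+p-1}})/r_{\ell+p-1}$; combining this with the induction hypothesis $\Top(E_{e_{\ell+p-1}})<\Top(E_{e_\ell})/(r_\ell\cdots r_{\ell+p-2})$ and multiplying by $1/r_{\ell+p-1}>0$ (which preserves the strict inequality) yields $\Top(E_{e_{\ell+p}})<\Top(E_{e_\ell})/(r_\ell\cdots r_{\ell+p-1})$, as desired.

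The argument is essentially bookkeeping, so there is no serious obstacle; the points that need a little care are checking that the hypotheses of Lemma~\ref{lem:no-consecutive-dicritical} hold at every step --- in particular that the dicriticalness of each $e_{\ell+i}$ in the sequence rules out the ``non-dicritical'' alternative of that lemma --- and keeping track of strict versus non-strict inequalities, so that the final bound comes out strict even though the first inequality of the chain is only $\le$.
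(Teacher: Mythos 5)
Your proposal is correct and follows exactly the route the paper sets up (it gives no explicit proof, introducing the corollary with ``The first consequence of this Lemma is:'', meaning it is meant to follow from Lemma~\ref{lem:no-consecutive-dicritical} together with item~(2) of Proposition~\ref{pro:descent-step}, which is precisely what you do). Your induction for general $p$ is a clean bookkeeping step, and you rightly note that the dicriticalness of $e_{\ell+1}$ rules out the ``non-dicritical'' branch of the lemma and that the final strictness comes from the $<$ in the chain, so there is nothing to add.
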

We can proceed now to prove Corollary \ref{cor:a}.
\begin{proof}[Proof of Corollary \ref{cor:a}]
 In Theorem \ref{the:main}, we can improve the inequality as follows: we say that
 $e_\ell$ is a \emph{terminally dicritical} characteristic exponent if $e_\ell$ is
 dicritical and either $\ell=g$ or $e_{\ell+1}$ is non-dicritical. By Equation
 \eqref{eq:several-consecutive-dicritical} in Corollary
 \ref{cor:two-consecutive-dicritical}, the argument giving Equation
 \eqref{eq:repeated-dicr} in the proof of Theorem \ref{the:main} can be restricted
 to terminally dicritical exponents. Hence, if $\ell_1,\ldots \ell_s$ is the sequence of
 indices of terminally dicritical exponents,
 \begin{multline}\label{eq:large-ineq}
 \Top(E_{k_0}) \geq \prod_{j=1}^g r_j - \sum_{k=1}^{s} \left( \prod_{j=1}^{\ell_k} r_j
 - \prod_{j=1}^{\ell_k-1} r_j
 \right) \\
 =\left( \prod_{j=1}^g r_j - \prod_{j=1}^{\ell_{s}} r_j \right)+ \sum_{k=2}^{s}
 \left( \prod_{j=1}^{\ell_k -1} r_j - \prod_{j=1}^{\ell_{k-1}} r_j \right) +
 \prod_{j=1}^{\ell_1 -1} r_j,
 \end{multline}
 where all the terms in parentheses are non-negative.

 If $\ell_{s}<g$, i.e. the last characteristic exponent is non-dicritical, then
 \begin{equation*}
 \Top(E_{k_0}) \geq
 \prod_{j=1}^g r_j - \prod_{j=1}^{\ell_s} r_j \geq
 \prod_{j=1}^g r_j- \prod_{j=1}^{g-1}r_j\geq
 \prod_{j=1}^{g-1}r_j>
 \prod_{j=1}^{g-1}r_j - \prod_{j=1}^{g-2}r_j
 \end{equation*}
 because $r_{g}\geq 2$. Otherwise, if $\ell_s=g$ and $s=1$, then \eqref{eq:large-ineq}
 becomes
 \begin{equation*}
 \Top(E_{k_0}) \geq \prod_{j=1}^gr_j - \left(
 \prod_{j=1}^gr_j - \prod_{j=1}^{g-1}r_j
 \right) = \prod_{j=1}^{g-1}r_j >
 \prod_{j=1}^{g-1}r_j - \prod_{j=1}^{g-2} r_j.
 \end{equation*}
 When $\ell_s=g$ and $s>1$ then, we reason as follows: all
 the terms in the inner summation in \eqref{eq:large-ineq} are nonnegative, and the
 largest one is the one with $k=s$ because $r_j\geq 2$. Thus, by keeping just this
 term, we obtain
 \begin{align*}
 \left( \prod_{j=1}^g r_j - \prod_{j=1}^{\ell_{s}} r_j \right)+ \sum_{k=2}^{s}
 \left( \prod_{j=1}^{\ell_k -1} r_j - \prod_{j=1}^{\ell_{k-1}} r_j \right) +
 \prod_{j=1}^{\ell_1 -1} r_j\geq \\
 \left(\prod_{j=1}^g r_j - \prod_{j=1}^{g} r_j\right) +
 \prod_{j=1}^{g-1} r_j - \prod_{j=1}^{\ell_{s-1}}r_j + \prod_{j=1}^{\ell_1-1}r_j
 >\\
 \prod_{j=1}^{g-1}r_j- \prod_{j=1}^{g-2}r_j
 \end{align*}
 because $\ell_{s-1}\leq g-2$.
 \end{proof}

Corollary \ref{cor:a} is the best we can say in general for any kind of covered equation. However, for \emph{generic} or \emph{contracting} 
$q$-difference equations and for general differential equations, we can be more precise. The genericity condition for $q$-difference equations we shall state becomes clear after Lemmas \ref{lem:quotient-at-b-from-quotient-at-T} and \ref{lem:improper} below.

\subsection{Reasonable equations: relations between consecutive dicritical elements}
As we explained above, the dicritical property does not affect only the initial polynomial $\Phi_{k-1,k/n}(C)$ but it also
creates relations between the coefficients $A^{k}_{\iota j}$ and $B^{k}_{\iota j}$ falling
on a point $(\iota, j)$ in the dicritical element $E_{k}$. These relations, which in some
sense bring to mind the concept of \emph{residue} or \emph{index} for a singular
holomorphic foliation along a non-singular separatrix, will be key to discern what kind of
covered equations admit an even sharper bound of $H(P)$ in terms of the factors
$r_1,\ldots, r_g$. Hence the name given to the following concept.

\begin{definition}\label{def:residue}
 Consider the element $E_{k}$ with topmost vertex $(\iota, t)$ and lowest vertex $(\kappa, b)$. Using \eqref{eq:initial-form}, 
 we define the $k$-th \emph{top} and \emph{bottom} \emph{residues} as
 \begin{equation*}
 \tres_{k} = \frac{A^{k}_{\iota t}(0)}{B^{k}_{\iota t}(0)} =
 \frac{A^{k-1}_{\iota t}}{B^{k-1}_{\iota t}},
 \,\,\,
 \bres_{k} = \frac{A^{k}_{\kappa b}(a_{k})}{B^{k}_{\kappa b}(a_{k})}
 \end{equation*}
 respectively. By convention, $\ast/0=\infty$ (as $0/0$ does not happen by definition).
\end{definition}
The following result gives a necessary condition for the inequality in case (1) of Proposition \ref{pro:descent-step} to be an equality, which is the worst case for our bounds.
\begin{lemma}\label{lem:quotient-at-b-from-quotient-at-T}
 Let $\tres_{k}$ and $\bres_{k}$ be the $k$-th top and bottom residues, respectively. Assume that the initial polynomial $\Phi(C)=\Phi_{k-1,k/n}(C)$ is of the form $\Phi(C)=u(C^{\rho_k}-a_k^{\rho_k})^{\Bot (E_k)}$ where $\Phi(C)$ has degree $\Top (E_k)$, $\rho_k$ is as in Definition \ref{def:factor-of-exponent}, and $u$ is a non-zero constant (as a consequence, $E_{k-1,k/n}$ is non-dicritical). Then
 \begin{equation*}
 \bres_{k}= \rho_k\tres_{k} + (\rho_k-1) \delta_{k/n}
 \end{equation*}
if $\tres_{k} \neq \infty$, or $\bres_{k}=\infty$ otherwise. 
\end{lemma}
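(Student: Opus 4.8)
The plan is to use the explicit relations between the coefficients $A^{k}_{\iota_j\,j}(C)$, $B^{k}_{\iota_j\,j}(C)$ on the line $L_{k/n}(P_{k-1})$ and the initial polynomial $\Phi(C)$ provided by Lemma \ref{lem:alpha-beta-descent}. Write $t=\Top(E_k)$ and $b=\Bot(E_k)$, so $\Phi(C)=u(C^{\rho_k}-a_k^{\rho_k})^{b}$ with $\rho_k b=t$. First I would compute $\bres_k$ directly: by Equations \eqref{eq:alpha-beta-descent-3} and \eqref{equ:AdB},
\begin{equation*}
 B^{k}_{\iota_b\,b}(a_k)=\frac{1}{(b-1)!}\beta_k^{(b-1)}(a_k),\qquad
 A^{k}_{\iota_b\,b}(a_k)+\delta_{k/n}B^{k}_{\iota_b\,b}(a_k)=\frac{1}{b!}\Phi^{(b)}(a_k),
\end{equation*}
so that $\bres_k=\dfrac{A^{k}_{\iota_b\,b}(a_k)}{B^{k}_{\iota_b\,b}(a_k)}=\dfrac{\Phi^{(b)}(a_k)/b!}{\beta_k^{(b-1)}(a_k)/(b-1)!}-\delta_{k/n}$ whenever the denominator is nonzero. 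Similarly, at the top vertex $(\iota_t,t)$ one has, from the same lemma with $C=0$, $B^{k}_{\iota_t\,t}(0)=\dfrac{1}{(t-1)!}\beta_k^{(t-1)}(0)$ and $A^{k}_{\iota_t\,t}(0)+\delta_{k/n}B^{k}_{\iota_t\,t}(0)=\dfrac{1}{t!}\Phi^{(t)}(0)$; since $\deg\Phi=t$, $\Phi^{(t)}$ is the constant $t!$ times the leading coefficient $u$ of $\Phi$, and $\beta_k^{(t-1)}(0)=(t-1)!\,B^{k}_{\iota_t\,t}(0)$ recovers the leading coefficient of $\beta_k$. Thus $\tres_k=\dfrac{\text{(leading coeff of }\alpha_k)}{\text{(leading coeff of }\beta_k)}$ and $\tres_k+\delta_{k/n}=\dfrac{u}{\text{(leading coeff of }\beta_k)}$.

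The key computation is then to evaluate $\Phi^{(b)}(a_k)$ and $\beta_k^{(b-1)}(a_k)$ using the factored form $\Phi(C)=u(C^{\rho_k}-a_k^{\rho_k})^{b}$. Near $C=a_k$, since $a_k\neq 0$ and the other $\rho_k$-th roots of $a_k^{\rho_k}$ are distinct from $a_k$, we have $C^{\rho_k}-a_k^{\rho_k}=(C-a_k)\big(\rho_k a_k^{\rho_k-1}+O(C-a_k)\big)$, hence the lowest-order term of $\Phi$ at $a_k$ is $u(\rho_k a_k^{\rho_k-1})^{b}(C-a_k)^{b}$, giving $\Phi^{(b)}(a_k)=b!\,u\,\rho_k^{b}a_k^{(\rho_k-1)b}$. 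On the other hand, $\beta_k(C)=\big(\Phi(C)-\alpha_k(C)\big)/(\delta_{k/n}C)$ from \eqref{eq:phi-alpha-beta}, and I would argue — using the grid/parity structure established in the proof of Proposition \ref{pro:descent-step} (that $\Phi$, $\alpha_k$, and $C\beta_k$ are polynomials in $C^{\rho_k}$ once we factor out the appropriate power, here $\overline h=0$) — that $a_k$ is a root of $\beta_k$ of multiplicity exactly $b-1$, with $\beta_k^{(b-1)}(a_k)$ computable from the same leading-term analysis. Concretely, since $\Phi=\alpha_k+\delta_{k/n}C\beta_k$ and both $\alpha_k$ and $C\beta_k$ are polynomials in $C^{\rho_k}$ of degree $t$, the $(C-a_k)^{b}$-coefficient of $\Phi$ splits, and comparing with the $(C-a_k)^{b-1}$-coefficient of $C\beta_k$ (whose $(C-a_k)^{b}$-coefficient vanishes for degree/parity reasons as in the cited proof) yields $\delta_{k/n}a_k\beta_k^{(b-1)}(a_k)/(b-1)! = $ (the $(C-a_k)^{b}$-coefficient of $\Phi$ minus that of $\alpha_k$); a parallel relation holds at $C=0$ for the leading coefficients. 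Assembling these gives $\bres_k=\rho_k\tres_k+(\rho_k-1)\delta_{k/n}$, and the degenerate case $B^{k}_{\iota_t\,t}(0)=0$, i.e. $\beta_k$ of degree $<t-1$, forces $\tres_k=\infty$ and, tracking the same computation, $\bres_k=\infty$.

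The main obstacle I anticipate is the bookkeeping needed to pin down $\beta_k^{(b-1)}(a_k)$ exactly, rather than merely up to a nonzero scalar: I must show that the $(C-a_k)^{b}$-term of $C\beta_k(C)$ vanishes (so that the multiplicity of $a_k$ as a root of $\beta_k$ is precisely $b-1$ and the next coefficient carries the full relation), and this requires invoking the "polynomial in $C^{\rho_k}$" structure carefully — the same argument that in the proof of Proposition \ref{pro:descent-step} showed $\alpha_k/C^{\overline h}$ and $C\beta_k/C^{\overline h}$ are polynomials in $C^{\rho_k}$, now specialized to $\overline h=0$. Once that structural fact is in hand, the rest is a clean comparison of leading coefficients and of Taylor coefficients at $a_k$, and the claimed identity $\bres_k=\rho_k\tres_k+(\rho_k-1)\delta_{k/n}$ (with the $\infty$ convention) drops out. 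An alternative, perhaps cleaner route is to reuse the three-step change of indeterminates from the proof of Lemma \ref{lem:alpha-beta-descent}: after the substitutions $y\mapsto x^{k/n}y$, translation by $C$, and its inverse, the element $E_{k-1,k/n}$ becomes a vertical segment, and the residues $\tres_k,\bres_k$ become ratios of the extreme coefficients of a one-variable polynomial obtained from $\Phi$ by a linear shift; one then reads off the identity from the factored form of $\Phi$ directly. I would present whichever of the two is shorter once the vanishing of the $(C-a_k)^{b}$-coefficient of $C\beta_k$ is confirmed.
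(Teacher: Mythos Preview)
Your overall strategy coincides with the paper's: use Lemma~\ref{lem:alpha-beta-descent} to express $\bres_k$ as $\dfrac{\Phi^{(b)}(a_k)/b!}{\beta_k^{(b-1)}(a_k)/(b-1)!}-\delta_{k/n}$, compute $\Phi^{(b)}(a_k)$ from the factored form of $\Phi$, and reduce everything to the ratio $u/v$ of leading coefficients, which equals $\tres_k+\delta_{k/n}$. You also correctly isolate the one nontrivial point, namely the exact evaluation of $\beta_k^{(b-1)}(a_k)$, and you correctly point to the grid argument from the proof of Proposition~\ref{pro:descent-step} (here with $\overline h=0$) as the source of the needed structure: $C\beta_k(C)$ is a polynomial in $C^{\rho_k}$.

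There is, however, a genuine gap in how you propose to close that point. Your claim that ``the $(C-a_k)^{b}$-coefficient of $C\beta_k$ vanishes for degree/parity reasons'' is false: with $C\beta_k(C)=vC^{\rho_k}(C^{\rho_k}-a_k^{\rho_k})^{b-1}$ one checks directly that the $(C-a_k)^{b}$-term is nonzero whenever $b\geq 2$ (the factor $C^{\rho_k}$ contributes a linear term in $C-a_k$). So the comparison of Taylor coefficients you sketch does not by itself determine $\beta_k^{(b-1)}(a_k)$. What does work---and this is precisely what the paper does---is to combine the structural fact that $C\beta_k(C)$ is a polynomial in $C^{\rho_k}$ of degree $\leq t=\rho_k b$ with the multiplicity bound (Corollary~\ref{cor:height-multiplicity}) that $a_k\neq 0$ is a root of $\beta_k$ of multiplicity $\geq b-1$. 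Writing $C\beta_k(C)=Q(C^{\rho_k})$ with $\deg Q\leq b$ and $Q(0)=0$, the root $a_k^{\rho_k}$ of $Q(z)/z$ must then have multiplicity exactly $b-1$, forcing
\[
\beta_k(C)=v\,C^{\rho_k-1}(C^{\rho_k}-a_k^{\rho_k})^{b-1},\qquad v=B^{k}_{\iota_t\,t}(0),
\]
or else $\beta_k\equiv 0$ (the $\tres_k=\infty$ case). Once $\beta_k$ is pinned down in closed form, your Taylor-expansion computation and the paper's roots-of-unity computation (using $\prod_{j=1}^{\rho_k-1}(1-\xi^j)=\rho_k$) both give $\beta_k^{(b-1)}(a_k)/(b-1)!=v\,\rho_k^{\,b-1}a_k^{(\rho_k-1)b}$, and the identity $\bres_k=\rho_k(\tres_k+\delta_{k/n})-\delta_{k/n}$ follows. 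In short: replace the incorrect vanishing claim by the explicit determination of $\beta_k$, and your argument becomes the paper's. You should also dispose of the case $\rho_k=1$ at the outset (there $t=b$, so $\bres_k=\tres_k$ trivially, and one cannot assume $a_k\neq 0$).
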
 
\begin{proof}
 Let $(\iota,t)$ be the top point of $E_k$ and $(\kappa, b)$ its bottom
 one. Before proceeding, notice that
 $u=A^{k}_{\iota t}(0)+\delta_{k/n}B^{k}_{\iota t}(0)$. Assume first $\rho_k=1$. Our
 hypothesis implies $t=b$ and hence $\bres_{k} = \tres_{k}$ as desired. From now on we
 assume $\rho_k>1$ and hence $a_k \neq 0$.
 
 Analogously to the proof of the non-dicritical case of Proposition \ref{pro:descent-step}, we obtain that 
 all the points in $E_{k-1,k/n}$ are of the form $(\iota+s\rho_{k} k/n,t-s\rho_{k})$ for
 some $s\in \mathbb{Z}_{\geq 0}$. Moreover, since $\Phi(0) \neq 0$, it follows that $t$ is of the form $t=m \rho_k$ for some $m \in \mathbb{Z}_{\geq 0}$.
 Thus, if $(\iota,j)$ is a point in $E_{k-1,k/n}$, then
 $j=t -s\rho_{k}=(m-s)\rho_{k}$ for some $s\geq 0$. Thus, $B^{k-1}_{\iota_{j},j}\neq 0$, where $\iota_{j}=\iota + (m\rho_{k}-j)k/n$, implies that $j$ is
 a multiple of $\rho_k$. Therefore, the polynomial $C\beta_{k}(C)$ is in fact a
 polynomial in $C^{\rho_k}$ of degree at most $t$. By Corollary
 \ref{cor:height-multiplicity},
 as $a_k$ is a root of multiplicity at least $b-1$ of $\beta_k (C)$,
 then either $\beta_k (C)=0$ or $C\beta_k (C)=vC^{\rho_k}(C^{\rho_k}-a_k^{\rho_k})^{b-1}$
 for $v=B^{k}_{\iota t}(0)$. In the first case $\tres_{k}=\infty$ and certainly
 $\bres_{k}=\infty$ as well. From now on we assume
 $\beta_k (C)=v C^{\rho_k-1}(C^{\rho_k}-a_k^{\rho_k})^{b-1}$ with $v \neq 0$. By
 \eqref{eq:alpha-beta-descent-3} in Lemma \ref{lem:alpha-beta-descent}, we obtain
 \begin{equation*}
 B^{k}_{\kappa b}(a_k) = \frac{1}{(b-1)!}\beta_k ^{(b-1)}(a_k)
 \end{equation*}
 which gives, for $\xi$ a primitive $\rho_k$-th root of unity:
 \begin{equation*}
 B^{k}_{\kappa b}(a_k) = \frac{B^{k}_{\iota t}(0)}{(b-1)!} a_{k}^{\rho_k-1} (b-1)!\prod_{j=1}^{\rho_k-1}\left(a_k - \xi^ja_k\right)^{b-1} = B^{k}_{\iota t}(0) a_{k}^{(\rho_k-1)b} \prod_{j=1}^{\rho_k-1}\left(1 - \xi^j\right)^{b-1}.
 \end{equation*}
 On the other hand, for the same $\xi$, we have 
 \begin{equation*}
 \frac{1}{b!}\Phi^{(b)}(a_k) = u \prod_{j=1}^{\rho_k-1}
 \left(a_k - \xi^{j}a_k \right)^b = u a_{k}^{(\rho_k-1)b} \prod_{j=1}^{\rho_k-1}
 \left( 1 - \xi^j \right)^b.
 \end{equation*}
 Applying Equations \eqref{eq:alpha-beta-descent-2} and \eqref{eq:alpha-beta-descent-3} of
 Lemma~\ref{lem:alpha-beta-descent}, and the two equalities above, we get
 \begin{equation*}
 \frac{A^{k}_{\kappa b}(a_k)}{B^{k}_{\kappa b}(a_{k})}=\frac{\Phi^{(b)}(a_k)}{b!B^{k}_{\kappa b}(a_k)} - \delta_{k/n} =
 \frac{u a_{k}^{(\rho_k-1)b} \prod_{j=1}^{\rho_k-1}
 \left( 1 - \xi^j \right)^b}
 {B^{k}_{\iota t}(0) a_{k}^{(\rho_k-1)b} \prod_{j=1}^{\rho_k-1}\left(1 - \xi^j\right)^{b-1}} -
 \delta_{k/n} .
 \end{equation*}
 And, as $u=A^{k}_{\iota t}(0)+\delta_{k/n}B^{k}_{\iota t}(0)$, we obtain
 \begin{equation*}
 \bres_{k}=\frac{A^{k}_{\kappa b}(a_k)}{B^{k}_{\kappa b}(a_k)} = \frac{A^{k}_{\iota t}(0)+\delta_{k/n} B^{k}_{\iota t}(0)}{B^{k}_{\iota t}(0)} \prod_{j=1}^{\rho_k-1}(1-\xi^j) - \delta_{k/n} . 
 \end{equation*}
 But since $\prod_{j=1}^{\rho_k-1} (1-\xi^j) = (C^{\rho_k}- 1)' (1)$, we have $\prod_{j=1}^{\rho_k-1} (1-\xi^j) =\rho_k$, so that 
 \begin{equation*}
 \bres_{k} = \rho_{k}\tres_{k} + (\rho_k-1)\delta_{k/n}
 \end{equation*}
 as desired. 
\end{proof}
\begin{corollary}\label{cor:residue-in-pure-chain}
Assume that either $E_{k-1,k/n}$ is dicritical or $\Bot(E_{k})=\Top(E_{k})/\rho_k$. Then
 \begin{equation}
 \label{equ:recurrence}
 \bres_{k} = \rho_{k} \tres_{k} + (\rho_{k}-1) \delta_{k/n}.
 \end{equation} 
 Furthermore:
 \begin{enumerate}
 \item If $P$ is a differential equation and $\tres_{k}$ is a real number with $\tres_{k} \geq -k/n$, then $\bres_{k} \geq -k/n$.
 \item If $P$ is a $q$-difference equation with $q\in \mathbb{R}^{+} \setminus \{ 1 \}$ then $\tres_{k}\geq -q^{k/n}$ (resp. $\tres_{k}\leq -q^{k/n}$) if and only if 
 $\bres_{k} \geq -q^{k/n}$ (resp. $\bres_{k} \leq -q^{k/n}$). 
 \end{enumerate}
\end{corollary}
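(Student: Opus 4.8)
The plan is to derive the recurrence \eqref{equ:recurrence} directly from the two hypotheses and then perform two elementary case analyses. First, if $E_{k-1,k/n}$ is dicritical, then $\Phi_{k-1,k/n}(C)\equiv 0$, and Corollary \ref{cor:height-multiplicity}(3) gives $A^k_{\iota_j\,j}(a_k)+\delta_{k/n}B^k_{\iota_j\,j}(a_k)=0$ at every point of $E_k$, in particular at the top $(\iota,t)$ and the bottom $(\kappa,b)$. Hence $\tres_k=-\delta_{k/n}$ and $\bres_k=-\delta_{k/n}$ (interpreting the case where a denominator vanishes via the convention $\ast/0=\infty$), and one checks directly that $\rho_k\tres_k+(\rho_k-1)\delta_{k/n}=-\rho_k\delta_{k/n}+(\rho_k-1)\delta_{k/n}=-\delta_{k/n}=\bres_k$; so \eqref{equ:recurrence} holds. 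If instead $\Bot(E_k)=\Top(E_k)/\rho_k$ and $E_{k-1,k/n}$ is non-dicritical, then Corollary \ref{cor:inequalities-T-b-sharp} tells us that $\Phi_{k-1,k/n}(C)=u(C^{\rho_k}-a_k^{\rho_k})^{\Bot(E_k)}$ has degree exactly $\Top(E_k)$, which is precisely the hypothesis of Lemma \ref{lem:quotient-at-b-from-quotient-at-T}; applying that lemma yields \eqref{equ:recurrence} again (with the $\bres_k=\infty$ case covered when $\tres_k=\infty$).

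Next I would prove the two refinements from the recurrence. For part (1), in the differential case $\delta_{k/n}=k/n>0$, so writing $\bres_k+k/n=\rho_k\tres_k+(\rho_k-1)(k/n)+k/n=\rho_k(\tres_k+k/n)$; since $\rho_k\ge 1$ and, by hypothesis, $\tres_k+k/n\ge 0$, we get $\bres_k+k/n\ge 0$, i.e.\ $\bres_k\ge -k/n$. (When $\tres_k=\infty$ we have $\bres_k=\infty$ and the inequality holds trivially.) For part (2), in the $q$-difference case with $q\in\mathbb{R}^+\setminus\{1\}$ we have $\delta_{k/n}=q^{k/n}>0$, and the same algebra gives $\bres_k+q^{k/n}=\rho_k(\tres_k+q^{k/n})$; since $\rho_k\ge 1$ is a positive real, the sign of $\bres_k+q^{k/n}$ equals the sign of $\tres_k+q^{k/n}$, which gives both equivalences (the ``resp.'' statements) at once, including the degenerate value $\infty$.

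The only genuinely nontrivial input is the derivation of \eqref{equ:recurrence} in the non-dicritical sharp case, but that is exactly Lemma \ref{lem:quotient-at-b-from-quotient-at-T}, whose hypotheses are supplied by Corollary \ref{cor:inequalities-T-b-sharp}; so the main obstacle has already been dispatched upstream, and what remains here is the bookkeeping of the two cases and the one-line factorizations $\bres_k+\delta_{k/n}=\rho_k(\tres_k+\delta_{k/n})$. I would be careful to treat the $\infty$ cases explicitly (both when $B^{k-1}_{\iota t}=0$ and when $B^k_{\kappa b}(a_k)=0$) so that the convention $\ast/0=\infty$ is used consistently, and to note that in part (1) the conclusion is only an inequality (not an equivalence) because $\rho_k$ may be strictly larger than $1$, so $\tres_k+k/n=0$ forces $\bres_k+k/n=0$ but a strictly positive $\tres_k+k/n$ is merely scaled up, never reflected, which is all that is claimed.
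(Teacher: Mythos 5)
Your proof is correct and follows essentially the same path as the paper's: in the dicritical case you read off $\tres_k=\bres_k=-\delta_{k/n}$ from Corollary~\ref{cor:height-multiplicity}(3) and verify \eqref{equ:recurrence} directly, and in the non-dicritical case with $\Bot(E_k)=\Top(E_k)/\rho_k$ you reduce to Lemma~\ref{lem:quotient-at-b-from-quotient-at-T}, with the algebra $\bres_k+\delta_{k/n}=\rho_k(\tres_k+\delta_{k/n})$ giving (1) and (2). The one small improvement over the paper's terse write-up is that you explicitly invoke Corollary~\ref{cor:inequalities-T-b-sharp} to show the hypotheses of Lemma~\ref{lem:quotient-at-b-from-quotient-at-T} are met (in particular that $\deg\Phi=\Top(E_k)$), a step the paper leaves implicit; the parenthetical about $\ast/0=\infty$ in the dicritical branch is harmless but unnecessary there, since a vanishing denominator at a vertex would force the numerator to vanish as well, contradicting that the point lies in the cloud.
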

\begin{proof}
 If $E_{k-1,k/n}$ is dicritical, then all the coefficients of $\Phi_{k-1,k/n}(C)=\alpha_k(C)+\delta_{k/n}C\beta_k(C)$ are $0$ and Equation \eqref{eq:dicritical-residue} gives:
 \begin{equation*}
 A^{k}_{\iota_{j} j} = - \delta_{k/n} B^{k}_{\iota_{j} j}
 \end{equation*}
 for each $(\iota_{j},j)\in E_{k}$, and $j=0,\ldots, \Top(E_{k-1,k/n})$, so that
 \eqref{equ:recurrence} is trivial in this case with $\tres{k}=\bres_k=-\delta_{k/n}$. When $E_{k-1,k/n}$ is not
 dicritical, the result follows from the fact that $\Bot(E_{k})=\Top(E_{k})$ if $\rho_{k}=1$, and
 from 
 Lemma
 \ref{lem:quotient-at-b-from-quotient-at-T} if $\rho_{k} >1$. The other results follow
 trivially from that equality.
\end{proof}
Our next goal is to improve the lower bound provided by Corollary \ref{cor:a}. Indeed, we can obtain a better lower bound in the case of differential equations and, for $q$-difference equations, when $q$ is not a root of a certain subset of algebraic equations, the so called unreasonable equations, which we shall describe below. 
\begin{definition} 
A polynomial $Q(z)= z^m + u_{m-1} z^{m-1} + \hdots + u_{0} \in {\mathbb R}[z]$ is \emph{improper} if $u_0, \hdots, u_{m-1} \in {\mathbb R}^{+}$ and 
$1 \leq u_{m-1} \leq u_{m-2} \leq \hdots \leq u_0$.
\end{definition}
\begin{lemma}
\label{lem:improper}
Any complex root $z_0$ of an improper polynomial $Q(z)$ satisfies $|z_0| \geq 1$ and 
$|z_0| \leq \max \left( \frac{u_0}{u_1}, \hdots, \frac{u_{m-2}}{u_{m-1}}, \frac{u_{m-1}}{1} \right)$.
\end{lemma}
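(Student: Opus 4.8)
The plan is to read the statement as a Eneström--Kakeya-type estimate and to derive both inequalities from the single elementary fact that a polynomial $p(z)=\sum_{j=0}^{m}c_jz^j$ with $0<c_0\le c_1\le\ldots\le c_m$ has all its zeros in the closed unit disc. I would include a short self-contained proof of this fact rather than merely cite it: if $p(z_0)=0$ with $|z_0|>1$, then $(z-1)p(z)=c_mz^{m+1}+\sum_{j=1}^{m}(c_{j-1}-c_j)z^{j}-c_0$ also vanishes at $z_0$, hence $c_m|z_0|^{m+1}\le\sum_{j=1}^{m}(c_j-c_{j-1})|z_0|^{j}+c_0$; since $|z_0|>1$ one has $|z_0|^j\le|z_0|^m$ for $j\le m$ and $c_0<c_0|z_0|^m$, and the nonnegative coefficients telescope as $\sum_{j=1}^{m}(c_j-c_{j-1})+c_0=c_m$, so $c_m|z_0|^{m+1}<c_m|z_0|^m$, a contradiction.

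For the lower bound $|z_0|\ge 1$ I would pass to the reciprocal polynomial. Since $Q(0)=u_0>0$, no zero of $Q$ equals $0$, so whenever $Q(z_0)=0$ the number $1/z_0$ is a zero of $\widehat Q(w):=w^mQ(1/w)=1+u_{m-1}w+u_{m-2}w^2+\ldots+u_1w^{m-1}+u_0w^m$. The coefficient sequence of $\widehat Q$ is exactly $1\le u_{m-1}\le u_{m-2}\le\ldots\le u_0$, i.e. positive and nondecreasing, so the basic fact gives $|1/z_0|\le 1$, that is $|z_0|\ge 1$.

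For the upper bound I would rescale so as to reduce to the same situation. Setting $u_m:=1$ and $M:=\max(u_0/u_1,\ldots,u_{m-2}/u_{m-1},u_{m-1}/1)=\max_{0\le j\le m-1}u_j/u_{j+1}$ (note $M\ge 1$ because $u_0\ge u_1\ge\ldots\ge u_{m-1}\ge u_m$), the polynomial $R(w):=M^{-m}Q(Mw)=\sum_{j=0}^{m}v_jw^{j}$ has coefficients $v_j=u_j/M^{m-j}$ with $v_m=1$ and $v_j/v_{j+1}=(u_j/u_{j+1})/M\le 1$, hence $0<v_0\le v_1\le\ldots\le v_m=1$. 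The basic fact applied to $R$ shows every zero of $R$ lies in $|w|\le 1$, and since $Q(z_0)=0$ precisely when $R(z_0/M)=0$, we conclude $|z_0|\le M$, as desired.

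There is no real obstacle here: the computations are routine. The only points where I would be careful are the index bookkeeping (the convention $u_m=1$, the reindexing in the reciprocal polynomial $\widehat Q$, and the ratio inequalities for the $v_j$), and ensuring that the telescoping estimate in the basic fact is genuinely strict, which it is thanks to the constant-term contribution $c_0<c_0|z_0|^m$ when $|z_0|>1$.
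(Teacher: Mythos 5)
Your proof is correct and rests on exactly the same device as the paper's: multiply by $(z-1)$ to telescope the coefficient differences and apply the triangle inequality. The only difference is in organization. The paper proves the bound $|z_0|\geq 1$ directly for the improper polynomial $Q$ (whose non-leading coefficients are nonincreasing, so $(z-1)Q(z)$ has nonnegative middle coefficients), and then obtains the upper bound in one stroke by checking that $\tilde Q(z)=z^m Q(r/z)/u_0$ is again improper, the substitution $z\mapsto r/z$ doing the rescaling and the reciprocal simultaneously. You instead isolate the Enestr\"om--Kakeya statement for polynomials with positive nondecreasing coefficients as a self-contained lemma and then apply it twice, once to the reciprocal polynomial $\widehat Q$ and once to the rescaled polynomial $R$. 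Both routes require the same verification (that the coefficient ratios compare favourably with $M=r$), and the amount of work is the same; your version is marginally more modular, the paper's marginally more economical. One small point worth noting: the paper's printed range ``$0\le j<m-1$'' in the definition of $r$ is a typo for $0\le j\le m-1$ (equivalently $0\le j<m$), which matches your $M$ and is needed for $\tilde Q$ to be improper, exactly the reading your computation uses.
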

\begin{proof}
It is obvious that no positive real number can be a root of $Q(z)$. Denote $u_m = 1$. We have
\[ (z-1) Q(z) = (z^{m+1} - z^{m}) + \sum_{j=0}^{ m-1} u_j (z^{j+1} - z^{j}) = z^{m+1} + \sum_{j=1}^{m} (u_{j-1} -u_j ) z^{j} - u_0. \]
Given $z_0 \in {\mathbb C}$ with $|z_0| < 1$, we obtain
\[ \left| z_{0}^{m+1} + \sum_{j=1}^{m} (u_{j-1} -u_j ) z_{0}^{j} \right| < 1 + (u_{m-1} - u_m) + \hdots + (u_0 - u_1) = u_0, \]
so $z_0$ is not a root of $Q(z)$. 

Let us prove the other inequality. Denote $r = \max_{0 \leq j < m-1} (u_{j}/u_{j+1})$. It suffices to prove that $\tilde{Q}(z):= z^{m} Q(r/z)/u_0$ has no roots of modulus less than $1$.
Since $\tilde{Q}(z)$ is improper, the result is a consequence of the first part of the proof.
\end{proof}

Corollary \ref{cor:residue-in-pure-chain} on top and bottom residues has an important
consequence on chains of non-dicritical elements that follow a dicritical one. We
want to apply Proposition \ref{pro:descent-step} repeatedly without a priori information on the
dicritical exponents in order to obtain a sharp bound for $H(P,s(x))$, and hence $H(P)$ (Theorem \ref{teo:reasonable}). 
To this end, it is important to study situations in which the iterative use of 
Proposition \ref{pro:descent-step} and Lemma \ref{lem:top-less-bottom}
gives us poor lower bounds.
This is the goal of the following result in which we analyze a ``worst case scenario" for a pair of 
dicritical elements $E_{k-1, k/n}$ and $E_{l, (l+1)/n}$
that are separated by a chain of non-dicritical elements $E_{k, (k+1)/n}, E_{k+1, (k+2)/n}, \hdots, E_{l-1, l/n}$. 
We shall also see 
that it can only happen in very special cases of $q$-difference equations. This
motivates Definition \ref{def:reasonable}, of reasonable equations. 
\begin{proposition}\label{pro:no-pure-dicritical-chain}
 Assume that $E_{k-1,k/n}$ is a dicritical element and that for
 $j=k+1,\ldots, l$, any element $E_{j}$ satisfies
 $\Bot(E_{j })=\Top(E_{j})/\rho_j$ and
 $\Top(E_{j})=\Bot(E_{j-1})$. Assume also that if $\rho_j>1$
 then $E_{j-1,j/n}$ is non-dicritical for $j=k+1,\ldots, l$. If the
 bottom of $E_{l}$ belongs to a dicritical element, then the
 equation $P$ is $q$-differential and, denoting $s=q^{1/n}$, we have
 \begin{equation}
 \label{equ:unreasonable}
 s^{\overline{l}-k} + \rho_{\overline{l}}\, s^{\overline{l}-k-1} + \rho_{\overline{l}-1}\,
 \rho_{\overline{l}}\, s^{\overline{l}-k-2} + \hdots +
 \rho_{k+1} \hdots \rho_{\overline{l}} =0 
 \end{equation}
 for some $\overline{l} \geq l$, which implies that $q^{1/n}$ is the root of an improper polynomial.
\end{proposition}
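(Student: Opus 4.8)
The plan is to turn the hypotheses into a recursion on the top and bottom residues $\tres_j,\bres_j$ of Definition~\ref{def:residue} and to solve it. First I would record the two boundary facts. Since $E_{k-1,k/n}$ is dicritical, the dicritical case of Corollary~\ref{cor:residue-in-pure-chain} gives $\tres_k=\bres_k=-\delta_{k/n}$. For the other end, write $Q$ for the bottom vertex of $E_l$; if $Q$ belongs to a dicritical element, that element has co-slope $>l/n$, hence is of the form $E_{\overline l,(\overline l+1)/n}$ with $\overline l\ge l$, and since the part of $\mathcal N(P_l)$ immediately to the left of $Q$ has smaller co-slope, $Q$ must be the \emph{top} vertex of $E_{\overline l,(\overline l+1)/n}$. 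By Lemma~\ref{lem:same-newton-polygon} the coefficients at $Q$ are unchanged from the $l$-th substitution onwards (as $Q$ lies at or above $\Top(E_{m-1,m/n})$ for $m=l+1,\dots,\overline l$), so $\bres_l$ equals the top residue $\tres_{\overline l+1}$ of $E_{\overline l,(\overline l+1)/n}$, and the dicritical case of Corollary~\ref{cor:residue-in-pure-chain} applied to that element gives $\bres_l=-\delta_{(\overline l+1)/n}$.

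Next I would set up the recursion along the chain. For $j=k+1,\dots,l$ the hypothesis $\Top(E_j)=\Bot(E_{j-1})$ forces the top vertex of $E_{j-1,j/n}$ to coincide with the bottom vertex of $E_{j-1}$ (for co-slopes just above $(j-1)/n$ the element is that single vertex, so if $j/n$ gave a lower vertex we would contradict the equality of ordinates), so $\tres_j=\bres_{j-1}$; and the hypothesis $\Bot(E_j)=\Top(E_j)/\rho_j$ puts us in the setting of Corollary~\ref{cor:residue-in-pure-chain}, giving $\bres_j=\rho_j\tres_j+(\rho_j-1)\delta_{j/n}=\rho_j\bres_{j-1}+(\rho_j-1)\delta_{j/n}$. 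Since the recursion starts at the finite value $\bres_k=-\delta_{k/n}$, every residue is finite, and iterating yields
\begin{equation*}
 \bres_l=-\Big(\prod_{j=k+1}^{l}\rho_j\Big)\delta_{k/n}+\sum_{j=k+1}^{l}\Big(\prod_{i=j+1}^{l}\rho_i\Big)(\rho_j-1)\,\delta_{j/n}.
\end{equation*}
When $P$ is a differential equation, $\delta_{j/n}=j/n$, and applying Corollary~\ref{cor:residue-in-pure-chain}(1) step by step along the chain (from $\tres_{k+1}=\bres_k=-k/n\ge-(k+1)/n$, then $\tres_{k+2}=\bres_{k+1}\ge-(k+1)/n\ge-(k+2)/n$, and so on) gives $\bres_l\ge -l/n$. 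This contradicts $\bres_l=-(\overline l+1)/n\le-(l+1)/n<-l/n$. Hence $P$ is a $q$-difference equation.

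In the $q$-difference case set $s=q^{1/n}$, so $\delta_{j/n}=s^{j}$ and $s\neq 0,1$ (as $|q|\neq 1$). Equating the displayed formula for $\bres_l$ with $-\delta_{(\overline l+1)/n}=-s^{\overline l+1}$ and dividing by $-s^{k}$ gives a polynomial identity $R(s)=0$ of degree $\overline l+1-k$. Rewriting each term via $(\rho_j-1)\prod_{i=j+1}^{l}\rho_i=\prod_{i=j}^{l}\rho_i-\prod_{i=j+1}^{l}\rho_i$, the coefficients telescope; in particular $R(1)=0$, so $R(s)=(s-1)\widetilde R(s)$, and cancelling $s-1$ (possible since $s\neq 1$) leaves $\widetilde R(s)=0$ with
\begin{equation*}
 \widetilde R(s)=\prod_{i=k+1}^{l}\rho_i+\Big(\prod_{i=k+2}^{l}\rho_i\Big)s+\cdots+\rho_l\,s^{l-k-1}+s^{l-k}+\cdots+s^{\overline l-k}.
\end{equation*}
Since for co-slopes strictly between $l/n$ and $(\overline l+1)/n$ the relevant element of $\mathcal N(P_l)$ is the single vertex $Q$, Lemma~\ref{lem:new-puiseux-exponent-new-side} shows that none of $l+1,\dots,\overline l$ is a characteristic exponent, i.e. $\rho_{l+1}=\dots=\rho_{\overline l}=1$; substituting this in, $\widetilde R$ is precisely the polynomial of Equation~\eqref{equ:unreasonable}, which therefore vanishes at $s=q^{1/n}$. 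Finally, $\widetilde R$ is monic with coefficients $\prod_{i=j}^{l}\rho_i$ in the low-degree part and $1$ in the high-degree part; these are positive integers that are $\ge 1$ and non-increasing towards the leading coefficient because every $\rho_i\ge 1$, so $\widetilde R$ is improper and Lemma~\ref{lem:improper} applies.

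I expect the main difficulty to lie in the geometric bookkeeping supporting the two boundary facts: that equal ordinates in $\Top(E_j)=\Bot(E_{j-1})$ really produce a common vertex, that $\Bot(E_l)$ can meet a dicritical element only as its top vertex, and that the coefficients there are frozen from the $l$-th substitution on by Lemma~\ref{lem:same-newton-polygon}. Once these are established, the recursion, the telescoping and the verification of improperness are routine.
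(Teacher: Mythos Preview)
Your approach is the same as the paper's: propagate the residue recursion of Corollary~\ref{cor:residue-in-pure-chain} from the dicritical endpoint $E_{k-1,k/n}$, and match it against the dicritical value at the far end. The paper organises this slightly differently --- it first extends the chain to $\overline{l}$ (showing $\Top(E_j)=\Bot(E_j)$, $\rho_j=1$ for $l<j\le\overline{l}$) and then proves by induction the closed form $\bres_j=s^{k}(s-1)(s^{j-k}+\rho_j s^{j-k-1}+\cdots+\rho_{k+1}\cdots\rho_j)-s^{j+1}$, setting $j=\overline{l}$ --- whereas you stop the recursion at $l$, set $\bres_l=-\delta_{(\overline{l}+1)/n}$, and telescope. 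The two computations are equivalent.

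There is one genuine (though easily repaired) gap. Your argument for $\rho_{l+1}=\cdots=\rho_{\overline{l}}=1$ invokes Lemma~\ref{lem:new-puiseux-exponent-new-side} via the claim that the element of $\mathcal{N}(P_l)$ of each intermediate co-slope is the single vertex~$Q$. But that lemma concerns $E_j\subset\mathcal{N}(P_j)$, not $\mathcal{N}(P_l)$, and your claim about $\mathcal{N}(P_l)$ need not hold: below $Q$ the polygon $\mathcal{N}(P_l)$ may well have sides of co-slope in $(l/n,(\overline{l}+1)/n)$ that only disappear after further substitutions. The correct route (the paper's) is to take $\overline{l}$ \emph{minimal}, so that $E_{j-1,j/n}$ is non-dicritical for $l<j\le\overline{l}$; then $\Bot(E_l)=\Top(E_{\overline{l}+1})$ squeezes $\Top(E_j)=\Bot(E_j)$ for $l<j\le\overline{l}$ via Lemma~\ref{lem:top-less-bottom}, and Proposition~\ref{pro:descent-step}(1) gives $\rho_j=1$. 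With this correction your $\widetilde{R}$ matches \eqref{equ:unreasonable} and the proof goes through.
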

\begin{proof}
 By hypothesis, the bottom of $E_l$ belongs to a first dicritical element,
 $E_{\overline{l},(\overline{l}+1)/n}$ with $\overline{l}\geq l$, so that
 $\Bot(E_{l})=\Top(E_{\overline{l},(\overline{l} +1)/n})=\Top(E_{\overline{l}+1})$. This implies that, 
 we have $\Bot(E_{j})=\Top(E_{j })$ for any $l<j\leq \overline{l}$ and 
 $\Top(E_{j})=\Bot(E_{j-1})$ for any $l<j\leq \overline{l} +1$. Moreover, $\Bot(E_{j})=\Top(E_{j })$
 implies $\rho_j=1$ for any $l<j\leq \overline{l}$ by the first case of Proposition \ref{pro:descent-step}. 
 This property, together with the hypotheses, provide $\Bot(E_{j })=\Top(E_{j})/\rho_j$ for any $k < j \leq \overline{l}$.

 If $P$ is a differential equation, then the top and bottom residues of $E_{k}$ satisfy
 $\tres_{k}=\bres_{k}=-k/n$ because of the dicritical condition and Equation \eqref{eq:dicritical-residue}. 
 Now, $\tres_{k} \geq -k/n$ implies
 $\tres_{k+1} = \bres_{k} \geq -k/n > -(k+1)/n$,
 where the first equality is a consequence of $\Top (E_{k+1}) = \Bot(E_{k})$.
 An iterative application of
 Corollary \ref{cor:residue-in-pure-chain} provides the inequality
 $\bres_{j} \geq -j/n$ for $j=k ,\ldots, \overline{l}$. This prevents the bottom of each
 $E_{j}$, $k \leq j \leq \overline{l}$, from being dicritical at some later step, as
 $\bres_{j}$ should be $-m/n$ for some $m>j$ otherwise. Thus, $P$ cannot be a
 differential equation.

 We can then assume that $P$ is a $q$-difference equation. Denoting $s=q^{1/n}$, let us
 show, by induction, that
 \begin{equation}
 \label{equ:recurrence_l} 
 \bres_{j} = s^{k} (s-1)(s^{j-k} +\rho_{j} s^{j-k-1} + \rho_{j-1} \rho_{j} s^{j-k-2}
 + \hdots + \rho_{k+1} \hdots \rho_j ) - s^{j+1} 
 \end{equation}
 for any $k \leq j \leq \overline{l}$, from which the result follows because 
 $\bres_{\overline{l}}=-s^{\overline{l}+1}$
 by $\Top(E_{\overline{l}+1})=\Bot(E_{\overline{l}})$ and the dicriticalness of $E_{\overline{l},(\overline{l}+1)/n}$.
 Equation \eqref{equ:recurrence_l} reads $\bres_{k}=s^k(s-1)-s^{k+1} = -s^k$ in the base case $j=k$. In this
 case, as $E_{k-1,k/n}$ is dicritical, we get 
 \[ \tres_{k} =\bres_{k} = -\delta_{k/n}=-s^{k} \] 
 by \eqref{eq:dicritical-residue}, so that Equation (\ref{equ:recurrence_l}) holds indeed
 for $j=k$. Suppose Equation (\ref{equ:recurrence_l}) holds for some
 $j < \overline{l}$. We have $\tres_{j+1} = \bres_{j}$ because $\Top(E_{j+1})=\Bot(E_j)$.
 Since $\Bot(E_{j+1 })=\Top(E_{j+1})/\rho_{j+1}$, 
 Corollary \ref{cor:residue-in-pure-chain} gives the equality
 \[
 \bres_{j+1} = \rho_{j+1} \tres_{j+1} + (\rho_{j+1}-1) s^{j+1} .
 \] Thus,
 \[
 \bres_{j+1} = \rho_{j+1} \bres_j + (\rho_{j+1}-1)s^{j+1},
 \]
 so that the induction hypothesis on $j$ gives, then, by direct substitution
 \begin{equation*}
 \bres_{j+1} = \rho_{j+1}\left(s^k(s-1)(s^{j-k}+\rho_js^{j-k-1}+\cdots
 +\rho_{k+1}\cdots\rho_j) - s^{j+1}\right) + (\rho_{j+1}-1)s^{j+1}.
 \end{equation*}
 Inserting the common factor $\rho_{j+1}$ into the parenthesis starting with $s^{j-1}$,
 we get
 \begin{multline*}
 \bres_{j+1} = 
 s^k(s-1)\left(\rho_{j+1}s^{j-k} + \rho_{j}\rho_{j+1}s^{j-k-1} + \cdots +
 \rho_{k+1}\cdots \rho_{j+1}\right) \\
 -\rho_{j+1}s^{j+1} + \rho_{j+1}s^{j+1}-s^{j+1}=\\
 s^k(s-1)\left(\rho_{j+1}s^{j-k} + \rho_{j}\rho_{j+1}s^{j-k-1} + \cdots +
 \rho_{k+1}\cdots \rho_{j+1}\right) -s^{j+1}.
 \end{multline*}
 Finally, introducing the zero expression $s^{j+1-k} - s^{j+1-k}$ into the second parenthesis and operating the second term, we get
 \begin{multline*}
 \bres_{j+1} = 
 s^k(s-1)\left(s^{j+1-k}+\rho_{j+1}s^{j-k} + \rho_{j}\rho_{j+1}s^{j-k-1} + \cdots +
 \rho_{k+1}\cdots \rho_{j+1}\right) - \\
 s^{j+2} +s^{j+1} - s^{j+1},
 \end{multline*}
 which is Equation (\ref{equ:recurrence_l}) for $j+1$. 
 \end{proof}
\begin{definition}\label{def:reasonable}
 We say that $P$ is a \emph{reasonable} equation if it is a differential equation or if it is a $q$-difference equation such that $q^{1/n}$ is not a solution of any 
 equation of the form (\ref{equ:unreasonable}).
 \end{definition}
 \begin{remark} \label{rem:reasonable}
 The following cases provide reasonable $q$-difference equations, noticing that $q\in \mathbb{R}^+$ does not imply $q^{1/n}\in \mathbb{R}^+$, as one might have chosen $\log(q)$ to be non-real:
 \begin{itemize}
 \item If $q$ is a positive real number, different from $1$, and $q^{1/n} \in \mathbb{R}^{+}$,
 \item or $|q| < 1$, 
 \item or $|q|^{\frac{1}{n}} > \max (r_1, \hdots, r_g)$, 
 \item or $q$ is a trascendental number over $\mathbb{Q}$.
 \end{itemize}
 This is a direct consequence of Proposition \ref{pro:no-pure-dicritical-chain} and Lemma
 \ref{lem:improper} since a non-reasonable equation only happens if $q^{1/n}$ is a root
 of an improper polynomial. So in order for a $q$-difference equation to be reasonable,
 $q$ just needs to avoid a countable subset of the annulus
 $\{ 1 < |q|^{1/n} \leq \max (r_1, \hdots, r_g) \}$.
\end{remark}

The next result is what makes reasonable equations interesting: whenever there are two dicritical elements, any ``fastidious'' term $(\rho_k-1)/\rho_k$ between them coming from item (2) of Proposition \ref{pro:descent-step} disappears. 

\begin{lemma}\label{lem:inter-dicritical}
 Let $P$ be a $1$-covered reasonable equation. 
 Assume the following conditions hold:
 \begin{enumerate}
 \item The elements $E_{k-1,k/n}$ and $E_{l-1,l/n}$ are dicritical, and 
 \item Any characteristic exponent $e_j$ with $k<e_j<l$ is non-dicritical.
 \end{enumerate}
 Then
 \begin{equation*}
 \Top(E_{l}) < \frac{\Top(E_{k})}{\rho_{k}\cdots \rho_{l-1}}. 
 \end{equation*}
\end{lemma}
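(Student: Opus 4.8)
The plan is to run the descent of Proposition \ref{pro:descent-step} and Lemma \ref{lem:top-less-bottom} along the chain of elements $E_k, E_{k+1},\ldots, E_l$, and then exploit the fact that a \emph{reasonable} equation cannot realize the ``fully tight'' configuration --- which is exactly the content of Proposition \ref{pro:no-pure-dicritical-chain}.

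First I would set up the chain. For every $j$ with $k<j<l$, condition~(2) forces $E_{j-1,j/n}$ to be non-dicritical whenever $\rho_j>1$ (indeed $\rho_j>1$ means that $j$ is a characteristic exponent, necessarily with $k<j<l$), so in every case the first (non-dicritical) case of Proposition \ref{pro:descent-step} applies and gives $\Bot(E_j)\le \Top(E_j)/\rho_j$. Combining this with $\Top(E_{j+1})\le \Bot(E_j)$ for $k\le j<l$ (Lemma \ref{lem:top-less-bottom}) and multiplying through the chain yields
\[
 \Top(E_l)\,\rho_{k+1}\rho_{k+2}\cdots\rho_{l-1}\ \le\ \Bot(E_k),
\]
where the product is empty (equal to $1$) when $l=k+1$.

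The decisive step is to upgrade this to the strict inequality $\Top(E_l)\,\rho_{k+1}\cdots\rho_{l-1}\le \Bot(E_k)-1$. Suppose not; then equality holds above, which forces \emph{every} inequality used to derive it to be an equality: $\Top(E_l)=\Bot(E_{l-1})$, $\Bot(E_j)=\Top(E_j)/\rho_j$ for $k<j<l$, and $\Top(E_j)=\Bot(E_{j-1})$ for $k<j\le l$. If $l\ge k+2$, this is exactly the statement that the chain $E_{k+1},\ldots,E_{l-1}$ satisfies the hypotheses of Proposition \ref{pro:no-pure-dicritical-chain} (with ``$l$'' there replaced by $l-1$): $E_{k-1,k/n}$ is dicritical, each intermediate element is tight with non-dicritical associated exponent by condition~(2), and the bottom of $E_{l-1}$ belongs to a dicritical element --- because $\Bot(E_{l-1})=\Top(E_l)=\Top(E_{l-1,l/n})$, so the bottom vertex of $E_{l-1}$ is the top vertex of the dicritical element $E_{l-1,l/n}$. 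Proposition \ref{pro:no-pure-dicritical-chain} then forces $P$ to be a $q$-difference equation with $q^{1/n}$ a root of an equation of the form \eqref{equ:unreasonable}, contradicting the reasonableness of $P$. When $l=k+1$ the chain is empty, and the equality $\Top(E_{k+1})=\Bot(E_k)$ would make the bottom vertex of $E_k$ coincide with the top vertex of the later dicritical element $E_{k,(k+1)/n}$ (same height, consecutive co-slopes $k/n<(k+1)/n$), which is forbidden by Lemma \ref{lem:no-consecutive-dicritical}.

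Finally I would feed the strict inequality back into the second (dicritical) case of Proposition \ref{pro:descent-step}. Multiplying by $\rho_k$ and using $\Top(E_k)\ge \rho_k\Bot(E_k)-(\rho_k-1)$ (valid because $E_{k-1,k/n}$ is dicritical),
\[
 \Top(E_l)\,\rho_k\rho_{k+1}\cdots\rho_{l-1}\ \le\ \rho_k\Bot(E_k)-\rho_k\ \le\ \Top(E_k)-1\ <\ \Top(E_k),
\]
and dividing by the positive integer $\rho_k\cdots\rho_{l-1}$ gives the asserted bound. I expect the main obstacle to be the middle paragraph: one must check carefully that the tight configuration really matches the hypotheses of Proposition \ref{pro:no-pure-dicritical-chain} --- the index shift $l\mapsto l-1$, and especially that ``the bottom of $E_{l-1}$ belongs to a dicritical element'', which relies on the bottom vertex of $E_{l-1,(l-1)/n}$ and the top vertex of $E_{l-1,l/n}$ being literally the same lattice point of $\mathcal{N}(P_{l-1})$ --- together with treating the degenerate case $l=k+1$ separately via Lemma \ref{lem:no-consecutive-dicritical}.
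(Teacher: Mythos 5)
Your proof is correct and uses the same ingredients as the paper's: chain Proposition~\ref{pro:descent-step} and Lemma~\ref{lem:top-less-bottom} across the intermediate elements, and invoke Proposition~\ref{pro:no-pure-dicritical-chain} (the ``reasonable'' hypothesis) to rule out the fully tight configuration. The only difference is presentational --- you first prove $\Top(E_l)\,\rho_{k+1}\cdots\rho_{l-1}\leq \Bot(E_k)$ and then argue strictness by contradiction (treating $l=k+1$ via Lemma~\ref{lem:no-consecutive-dicritical}), whereas the paper runs an explicit case analysis on which inequality in the chain is strict; both are sound and equivalent.
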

\begin{proof}
 If $\Top(E_j)<\Bot(E_{j-1})$ for some $j\in \left\{ k+1,\ldots, l \right\}$ then, applying Proposition \ref{pro:descent-step} iteratively, we obtain:
 \begin{equation*}
 \Top(E_{j})\leq \Bot(E_{j-1}) -1 \leq \frac{\Bot (E_k)}{\rho_{k+1}\cdots \rho_{j-1}} - 1 
 \leq \frac{\frac{\Top(E_{k})}{\rho_k} + \frac{\rho_k-1}{\rho_{k}}}{\rho_{k+1}\cdots \rho_{j-1}} - 1 
 \end{equation*}
 which gives
 \begin{equation*}
 \Top(E_j) \leq \frac{\Top(E_{k})}{\rho_k\cdots \rho_{j-1}} + \frac{\rho_k-1}{\rho_k \cdots \rho_{j-1}} - 1 < \frac{\Top(E_{k})}{\rho_k\cdots \rho_{j-1}}, 
 \end{equation*}
 and as all characteristic exponents strictly between $j$ and $l$ are non-dicritical, Proposition \ref{pro:descent-step} 
 gives once more the result.

 If $\Top(E_j)=\Bot(E_{j-1})$ for all $j\in \left\{ k +1, \ldots, l \right\}$, there are two possibilities:
 \begin{itemize}
 \item If $\Bot(E_j)< \frac{\Top(E_j)}{\rho_j}$ for some $j$ with $k < j <l$, we get $\Bot(E_j)\leq \frac{\Top(E_j)-1}{\rho_j}$ and 
 the same argument as above gives:
 \begin{equation*}
 \Top(E_{j+1}) \leq \Bot(E_j) \leq \frac{\Top(E_j)-1}{\rho_j} 
 \leq \frac{\Top(E_{k})}{\rho_{k}\cdots \rho_j} + \frac{\rho_k-1}{\rho_k\cdots \rho_{j}} - \frac{1}{\rho_j} 
 < \frac{\Top(E_{k})}{\rho_k\cdots \rho_{j}} 
 \end{equation*}
 and as the characteristic exponents between $j+1$ and $l$ are non-dicritical, the result follows.
 \item If $\Bot(E_j)= \Top(E_j)/\rho_j$ for all $j\in \left\{ k+1, \ldots, l-1 \right\}$, then Proposition \ref{pro:no-pure-dicritical-chain} implies that $\Top(E_{l})$ cannot be equal to $\Bot(E_{l-1})$, so that this case cannot happen, and we are done.
 \end{itemize}
\end{proof}

\subsection{Proof of Theorem \ref{teo:reasonable}} 
In order to prove Theorem \ref{teo:reasonable},
we need to be able to control the elements of $P$ corresponding to exponents $k/n$ for $k/n<\ord(s(x))$: despite their coefficients being $0$, the elements $E_{k-1,k/n}$ could, in principle, be dicritical, preventing the desired bound from holding. This is tackled in the next technical result, from which Theorem \ref{teo:reasonable} follows trivially, setting $m=\ord(s(x))n$. 
\begin{proposition} \label{pro:b}
 Let $s(x)$ be a Puiseux solution of genus $g$ of the reasonable equation $P=0$ and fix $1 \leq m \leq \ord (s(x)) n$. Then
\begin{equation}
 \label{equ:ineq}
\Top (E_{P,m/n}) \geq r_1\cdots r_{g-1} .
\end{equation} 
The inequality is strict unless, possibly,
if all the following conditions hold: $e_{g}/n$ is the unique
dicritical exponent of $s(x)$ in $\mathbb{Z}_{\geq m} / n$,
$\Bot(E_{e_{g}})=\Top(E_{e_g})=1$, and for any $m \leq j <e_g$, 
one has $\Bot(E_{j})=\Top(E_{j})/\rho_j$ and
$\Bot(E_{j})=\Top(E_{j+1})$.
\end{proposition}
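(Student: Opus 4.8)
The plan is to descend through the characteristic exponents $e_1<\dots<e_g$ of $s(x)$, applying Proposition~\ref{pro:descent-step} and Lemma~\ref{lem:top-less-bottom} at each step, and to absorb the extra term $(\rho_k-1)$ that a dicritical exponent contributes by invoking Lemma~\ref{lem:inter-dicritical}; this is exactly where the hypothesis that $P$ be reasonable enters. We may assume $g\ge 1$, since for $g=0$ the bound is $1$ and $\Top(E_{P,m/n})\ge\Bot(E_{P,m/n})\ge 1$ by Remark~\ref{rem:top-less-than-bottom}. Writing $k_0=n\,\ord(s(x))$, we have $P=P_{m-1}=P_{k_0-1}$, every $e_\ell$ satisfies $e_\ell\ge k_0\ge m$, and $\Top(E_{P,m/n})=\Top(E_{m-1,m/n})=\Top(E_m)$ by Lemma~\ref{lem:top-less-bottom}.

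First I would record the elementary descent inequality: combining $\Top(E_k)\ge\Bot(E_k)\ge\Top(E_{k+1})$ (Lemma~\ref{lem:top-less-bottom}) with Proposition~\ref{pro:descent-step} gives $\Top(E_k)\ge\rho_k\Top(E_{k+1})$ whenever $E_{k-1,k/n}$ is non-dicritical or $\rho_k=1$, and $\Top(E_k)\ge\rho_k\Top(E_{k+1})-(\rho_k-1)$ when $E_{k-1,k/n}$ is dicritical. Since $\rho_j=1$ for every non-characteristic $j$, iterating the first inequality over a block of indices none of whose characteristic exponents is dicritical accumulates precisely the product of the corresponding $r_\ell$.

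If $d=0$, i.e.\ there is no dicritical characteristic exponent, iterating the elementary descent inequality from $m$ to $e_g$ gives $\Top(E_m)\ge r_1\cdots r_g\,\Top(E_{e_g+1})\ge r_1\cdots r_g>r_1\cdots r_{g-1}$, so the inequality is strict. Assume now $d\ge 1$ and let $e_{i_1}<\dots<e_{i_d}$ be the dicritical characteristic exponents. Over the initial block $[m,e_{i_1})$ one gets $\Top(E_m)\ge r_1\cdots r_{i_1-1}\Top(E_{e_{i_1}})$; between consecutive dicritical characteristic exponents, Lemma~\ref{lem:inter-dicritical} applied with $k=e_{i_t}$, $l=e_{i_{t+1}}$ gives the \emph{strict} inequality $\Top(E_{e_{i_t}})>r_{i_t}\cdots r_{i_{t+1}-1}\Top(E_{e_{i_{t+1}}})$; and for the block from $e_{i_d}$ on, if $i_d<g$ the non-dicritical exponents $e_{i_d+1},\dots,e_g$ below it force $\Bot(E_{e_{i_d}})\ge 2\,r_{i_d+1}\cdots r_{g-1}$ (using $r_g\ge 2$ and $\Bot(E_{e_g})\ge 1$), whence the dicritical case of Proposition~\ref{pro:descent-step} yields $\Top(E_{e_{i_d}})\ge 2r_{i_d}\cdots r_{g-1}-(r_{i_d}-1)>r_{i_d}\cdots r_{g-1}$, while if $i_d=g$ it yields only $\Top(E_{e_g})\ge r_g\Bot(E_{e_g})-(r_g-1)\ge 1$. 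Multiplying the pieces together gives $\Top(E_m)\ge r_1\cdots r_{g-1}$, with strict inequality unless $d=1$ and $i_1=g$.

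The main obstacle is the characterization of the equality case. When it occurs we must have $d=1$ and $i_1=g$, so $e_g$ is the unique dicritical characteristic exponent; then $\Top(E_m)=r_1\cdots r_{g-1}\Top(E_{e_g})$ with $r_1\cdots r_{g-1}\ge 1$ forces $\Top(E_{e_g})=\Bot(E_{e_g})=1$ and forces every instance of the elementary descent inequality on $[m,e_g)$ to be an equality, i.e.\ $\Bot(E_j)=\Top(E_j)/\rho_j$ and $\Bot(E_j)=\Top(E_{j+1})$ for all $m\le j<e_g$. It then remains to rule out any further dicritical exponent $k/n$ with $k\ge m$: such a $k$ cannot be a characteristic exponent (otherwise $d\ge 2$), and, using $\Top(E_j)=\Bot(E_j)=1$ for $j\ge e_g$, the equality relations above exhibit between $E_{e_g-1,e_g/n}$ and $E_{k-1,k/n}$ (in whichever order they occur) a pure chain of exactly the type that Proposition~\ref{pro:no-pure-dicritical-chain} forbids for a reasonable equation — a contradiction. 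Theorem~\ref{teo:reasonable} then follows by taking $m=\ord(s(x))\,n$.
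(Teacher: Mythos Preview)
Your argument is correct and follows essentially the same strategy as the paper: iterate Proposition~\ref{pro:descent-step} together with Lemma~\ref{lem:inter-dicritical} to absorb the dicritical correction terms, deduce that equality forces $d=1$ with $i_1=g$, and then read off the chain of equalities $\Bot(E_j)=\Top(E_j)/\rho_j$, $\Bot(E_j)=\Top(E_{j+1})$. The organization differs only cosmetically—the paper singles out the \emph{last} dicritical characteristic exponent and bundles everything before it into one inequality, whereas you chain through all of them—and your exclusion of further dicritical exponents via Proposition~\ref{pro:no-pure-dicritical-chain} is slightly more explicit than the paper's (which invokes Lemma~\ref{lem:inter-dicritical} for $k\le e_g$ and leaves the case $k>e_g$ implicit).
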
 
\begin{proof}
 Since of all coeficientes of $s(x)$ of index less than $m$ vanish, 
 we have that $P= P_{m-1}$ and $\Top (E_m) = \Top(E_{m-1, m/n})=\Top (E_{P, m/n})$.
 Recall that, for any element $E_{k}$, we have $\Bot(E_{k})\geq 1$ by Remark \ref{rem:top-less-than-bottom}.
 If there are no dicritical characteristic exponents, then the result
 follows from Proposition \ref{pro:descent-step}, as
 \begin{equation*}
 1\leq \Bot(E_{{e_g}}) \leq
 \frac{\Top(E_{m}) }{r_1\cdots r_g} .
 \end{equation*}

 Otherwise, let $e_{\ell}$ be the dicritical characteristic exponent with greatest index. By Proposition \ref{pro:descent-step}, we know that
 \begin{equation*}
 \Bot(E_{e_{\ell}}) \leq \frac{\Top(E_{e_{\ell}})}{r_{\ell}} + \frac{r_{\ell}-1}{r_{\ell}}
 \end{equation*}
 and an iterative use of the same Proposition, Lemma
 \ref{lem:inter-dicritical}
 and Corollary~\ref{cor:two-consecutive-dicritical}, if needed, gives
 \begin{equation*}
 \Top(E_{e_{\ell}}) \leq \frac{\Top(E_{m})}{r_1\cdots r_{\ell-1}}.
 \end{equation*}
 Combining both inequalities, we obtain
 \begin{equation*}
 \Bot(E_{e_{\ell}}) \leq \frac{\Top(E_{m})}{r_1\cdots r_{\ell}} + \frac{r_{\ell}-1}{r_{\ell}} . 
 \end{equation*}
 Finally, using Proposition \ref{pro:descent-step} for $\Bot(E_{e_{\ell}})$, taking into account that there are no more dicritical exponents,
 \begin{equation*}
 \Bot(E_{e_{\ell}}) \geq \Bot(E_{e_g})r_{\ell+1}\cdots r_g \geq r_{\ell+1}\cdots r_g.
 \end{equation*}
 Thus, we obtain\ 
 \begin{equation}
 \Top(E_{m}) \geq \Bot(E_{e_g}) r_1 \hdots r_g
 - \frac{r_{\ell} -1}{r_{\ell}} r_1 \hdots r_\ell
 \end{equation}
 and (\ref{equ:ineq}) follows from $\Bot(E_{e_g}) \geq 1$ and $r_g\geq 2$.

 Assume that the strict inequality does not hold, that is
 $\Top(E_{m})= r_1\cdots r_{g-1}$. This implies that $\ell = g$ and
 $\Bot(E_{e_g}) = 1$. There cannot be more dicritical exponents other than $e_{g}/n$ in $\{m,\ldots, e_g \}/n$, since
 Lemma \ref{lem:inter-dicritical} provides strict inequalities. Moreover, we get
 $\Top(E_{e_g}) = r_g \Bot(E_{e_g}) - (r_g-1)=r_g - (r_g-1)=1$. Finally,
 an iterative use of Proposition \ref{pro:descent-step} gives
 $\Top(E_{j})/ \rho_j =\Bot(E_{j})$ and hence $\Bot(E_j)=\Top(E_{j+1})$ for any
 $m \leq j < e_g$.
 \end{proof}
Taking into account that $r_i\geq 2$ for $i=1,\ldots g$, we get:
\begin{corollary}\label{cor:logarithm}
 With the same notation as in Theorem \ref{the:main}, 
 \begin{equation*}
 g \leq 1 + \log_2(H(P,s(x))) \leq 1 + \log_2(H(P)) ,
 \end{equation*}
and if $\ord (s(x)) \geq 1$, then $g\leq 1+\log_2(\nu_0(P)+1)$.
\end{corollary}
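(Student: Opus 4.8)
The plan is to read off Corollary~\ref{cor:logarithm} directly from Theorem~\ref{teo:reasonable} (equivalently Proposition~\ref{pro:b}), together with the trivial observation that each characteristic factor satisfies $r_i \geq 2$, which is part of Definition~\ref{def:puiseux-exponents-and-factors}. Since the bound is a one-line consequence, the only real content is keeping track of the two regimes ($\ord(s(x))<1$ versus $\ord(s(x))\geq 1$) and handling the degenerate case $g\leq 1$.

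First I would record that, by Theorem~\ref{teo:reasonable}, any reasonable equation $P$ with Puiseux solution $s(x)$ of genus $g$ satisfies $H(P)\geq H(P,s(x))\geq r_1\cdots r_{g-1}$, and $\nu_0(P)+1\geq r_1\cdots r_{g-1}$ when $\ord(s(x))\geq 1$. Next, since $r_i\geq 2$ for all $i$, we have $r_1\cdots r_{g-1}\geq 2^{g-1}$ whenever $g\geq 1$ (an empty product being $1=2^0$ when $g=1$, so the inequality is an equality-friendly bound in that case and trivially true when $g=0$ since then the right-hand side quantities are at least $1\geq 2^{-1}$ is vacuous — more cleanly, for $g=0$ the claimed inequality $g\leq 1+\log_2(H(P,s(x)))$ reads $0\leq 1+\log_2(H(P,s(x)))$, which holds because $H(P,s(x))\geq 1$). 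Then taking $\log_2$ of $H(P,s(x))\geq 2^{g-1}$ gives $\log_2(H(P,s(x)))\geq g-1$, i.e. $g\leq 1+\log_2(H(P,s(x)))$, and the inequality $H(P,s(x))\leq H(P)$ from \eqref{eq:Intro:H(P)geqH(P,s(x)} (or Remark~\ref{rem:heights}) yields $g\leq 1+\log_2(H(P))$. For the last assertion, when $\ord(s(x))\geq 1$ the bound $\nu_0(P)+1\geq r_1\cdots r_{g-1}\geq 2^{g-1}$ gives $g\leq 1+\log_2(\nu_0(P)+1)$ in exactly the same way.

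I do not expect any genuine obstacle here; the statement is a formal corollary. The only thing to be careful about is the case $g\leq 1$: for $g=1$ the product $r_1\cdots r_{g-1}$ is empty, so the hypothesis of Theorem~\ref{teo:reasonable} only gives $H(P,s(x))\geq 1$, and the conclusion $g=1\leq 1+\log_2(H(P,s(x)))$ follows from $\log_2(H(P,s(x)))\geq 0$; for $g=0$ it is immediate as above. One should also note that $H(P,s(x))$, $H(P)$ and $\nu_0(P)+1$ are all positive integers (indeed at least $1$) so the logarithms are well-defined and nonnegative, which is implicit in Definition~\ref{def:top-and-bottom} and Remark~\ref{rem:heights}. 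So the write-up is essentially: invoke Theorem~\ref{teo:reasonable}, use $r_i\geq 2$ to pass to $2^{g-1}$, take base-$2$ logarithms, and dispatch the small cases.
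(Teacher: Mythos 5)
Your proof is correct and is, in substance, exactly the derivation the paper intends: the paper states Corollary~\ref{cor:logarithm} immediately after Proposition~\ref{pro:b} with the one-line preamble ``Taking into account that $r_i \geq 2$ for $i=1,\ldots,g$, we get,'' which is precisely the step you carry out (apply Theorem~\ref{teo:reasonable}/Proposition~\ref{pro:b} to get $H(P)\geq H(P,s(x))\geq r_1\cdots r_{g-1}\geq 2^{g-1}$ and $\nu_0(P)+1\geq 2^{g-1}$, then take $\log_2$). Your handling of the degenerate cases $g=0,1$, and your observation that $H(P,s(x))$, $H(P)$, $\nu_0(P)+1$ are all $\geq 1$ so the logarithms are well-defined and nonnegative, are correct and a bit more careful than the paper's terse presentation.

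One small point worth making explicit: the corollary as printed says ``with the same notation as in Theorem~\ref{the:main},'' yet Theorem~\ref{the:main} alone (or Corollary~\ref{cor:a}, which follows from it for arbitrary covered equations) only gives $H(P,s(x)) > \prod_{j=1}^{g-1}r_j - \prod_{j=1}^{g-2}r_j$, i.e.\ $H(P,s(x))\geq 2^{g-2}+1$ in the worst case, which is \emph{not} enough to yield $g\leq 1+\log_2(H(P,s(x)))$ when $g\geq 3$. So the stronger bound $r_1\cdots r_{g-1}$ coming from Theorem~\ref{teo:reasonable} is genuinely needed, and you correctly invoke it with the ``reasonable equation'' hypothesis. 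You might flag in your write-up that the corollary implicitly inherits the reasonableness hypothesis from Theorem~\ref{teo:reasonable} (or Proposition~\ref{pro:b}), since the phrase ``same notation as Theorem~\ref{the:main}'' could otherwise be read as suggesting applicability to all covered equations.
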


\section{Multiplicity and height}
In the case of singular holomorphic foliations (see \cite{ilyashenko-yakovenko} for
instance), given a Pfaffian $1$-form $\omega=A(x,y)dx+B(x,y)dy$ with
$A(x,y),B(x,y)\in \mathbb{C}\{x,y\}$ satisfying $A(0,0)=B(0,0)=0$ and $\gcd(A(x,y),B(x,y))=1$ that defines a germ of
holomorphic foliation $\mathcal{F}$ at $0 \in \mathbb{C}^2$, the 
\emph{multiplicity} of $\mathcal{F}$ at $0$ is
\[ \nu_{0}(\mathcal{F}) = \min \{ \ord_{(x,y)} (A(x,y)),\ord_{(x,y)} (B(x,y)) \}, \] whose value does not depend on the coordinates $(x,y)$
and is at least~$1$. In \cite{Cano-Fortuny-Ribon-2020}, we
proved, using geometric arguments, that, if $\Gamma$ is an invariant irreducible curve,
which in generic coordinates has characteristic exponents $e_1, \ldots, e_g$ and Puiseux factors
$r_1,\ldots, r_g$, then the inequality
\begin{equation*}
 \nu_{0}(\mathcal{F}) \geq r_1\cdots r_{g-1}
\end{equation*}
holds. The natural question is wether we can
obtain this result using just the Newton polygon. The answer is
\emph{yes}. Set $P=A(x,y)+B(x,y) y_1$ and let $s(x)$ be the Puiseux series
expansion corresponding to $\Gamma$. Since $\Gamma$ is an integral
curve of $\omega$, then $s(x)$ is a solution of the
differential equation $P=0$.
After a linear change of coordinates, we can assume that $\Gamma$ is not tangent
to $x=0$; in particular we obtain $\ord_x(s(x))\geq
1$.
Furthermore, after a generic linear change of coordinates we can also assume the
following facts, depending on whether or not $E_{P,1}$ is dicritical,
which corresponds to $\mathcal{F}$ having an invariant branch tangent
to any generic
line in~$(\mathbb{C}^{ 2},0)$: 
\begin{enumerate}
\item Either $\Phi_{P,1} (C)\equiv 0$, or in other words, every line through the origin is an invariant curve of $A_{\nu}(x,y)dx+B_{\nu}(x,y)dy=0$,
where 
$A_{\nu}$ and $B_{\nu}$ are the homogeneous components of degree $\nu = \nu_{0}(\mathcal{F})$
of $A$ and $B$ respectively. This is the dicritical case.
\item Or $x=0$ is not an invariant curve of $A_{\nu}(x,y)dx+B_{\nu}(x,y)dy=0$,
or equivalently $\deg (\Phi_{P,1}(C)) = \nu +1$, the non-dicritical case.
\end{enumerate} 
Those assumptions do not change $\nu_0(\mathcal{F})$. 
We have the inequality
\begin{equation*}
 \nu_0(P)=\nu_0(\mathcal{F}) \geq \Top (E_{P,1}) -1
\end{equation*}
by definition, and by Proposition \ref{pro:b},
\begin{equation*} 
 \nu_0(\mathcal{F}) \geq \Top (E_{P,1}) -1 \geq r_1\cdots r_{g-1} -1.
\end{equation*}
We now prove that the trailing $-1$ can be removed from the inequality, just using arguments from the Newton construction.
\begin{proof}[Proof of Corollary \ref{cor:multiplicity-inequality}]
 The foliation $\mathcal{F}$ is singular, so that $\nu_0(\mathcal{F})\geq 1$ and the result holds for $g\leq 1$. We assume henceforward that $g>1$.
 
Assume, aiming at contradiction, that $ \Top (E_{P,1}) = r_1\cdots r_{g-1}$.
By Proposition~\ref{pro:b}
 we infer that $e_{g}/n$ is the unique dicritical exponent in $\mathbb{Z}_{\geq 1}/n$
 and the element
 $E_{e_{g}-1,e_g/n}$ is dicritical. Since the exponent $1$ is not a
 characteristic exponent the element $E_{P,1}=E_{n}$ of
 co-slope $1$ is non-dicritical and, by assumption (2), there is a point with abscissa
 $-1$ in $E_{n}$, which is indeed $(-1, \nu_0(\mathcal{F})+1)$. As its abscissa is
 $-1$, it is the topmost vertex of $E_{n}$, and we have $\tres_{n} =0$. Moreover, we have
 $\Bot(E_{j})= \Top(E_{{j+1}})$ and hence
 $\bres_{j} = \tres_{j+1}$ for any $n \leq j < e_g$ by Proposition~\ref{pro:b}.
 Furthermore, we get $\Bot(E_{e_{g}})=\Top(E_{e_g})=1$,
 $\Bot(E_{j})=\Top(E_{j})/\rho_j$ and $\Bot(E_{j})=\Top(E_{j+1})$ for any
 $n \leq j < e_g$.
 Applying again Proposition \ref{pro:b} and
 Corollary \ref{cor:residue-in-pure-chain}, we obtain
 \[
 \tres_{j+1} = \bres_{j} =\rho_{j} \tres_{j} +
 (\rho_{j}-1) \delta_{j/n}
 \] 
 for any $n \leq j < e_g$. Since $\tres_{n} =0$, we obtain
 $\tres_{j} \geq 0$ for any $n \leq j \leq e_g$ by induction.
 As $\tres_{e_g} \geq 0$, the top vertex of $E_{e_g}$
 cannot belong to a dicritical element, contradicting that
 $E_{e_g -1, e_{g}/n}$ is dicritical.
\end{proof}
\color{black}

\end{document}